\newif\ifPDF
\newcommand{\R}{\mathbb{R}}
\newcommand{\Z}{\mathbb{Z}}
\newcommand{\mc}{\mathcal}
\newcommand{\circled}[1]{\bigcirc\mspace{-13.0mu}\mathbf{\scriptstyle{#1}}\mspace{5.0mu}}
\newcommand{\eps}{\ensuremath{\epsilon} }
\newcommand{\op}[1]{\operatorname{#1}}
\newcommand{\vol}{\operatorname{Vol}}
\newcommand{\dvol}{\operatorname{dvol}}
\newcommand{\Jac}{\operatorname{Jac}}
\DeclareMathOperator{\tr}{Tr}
\newcommand{\Minvol}{\operatorname{Minvol}}
\newcommand{\ksec}{K_{\operatorname{sec}}}
\newcommand{\CB}{\mathcal B}
\DeclareMathOperator{\Id}{Id}
\DeclareMathOperator{\id}{{\mathbf e}}
\newcommand{\injrad}{\op{injrad} }
\newcommand{\directsum}{\oplus}
\newcommand{\Ric}{\op{Ric} }
\newcommand{\goto}{\xrightarrow}
\newcommand{\D}{\partial }
\newcommand{\pa}{\partial}
\newcommand{\of}{\circ }
\newcommand{\norm}[1]{\left\| #1 \right\| }
\newcommand{\inner}[1]{\left< #1 \right> }
\newcommand{\abs}[1]{\left\lvert #1 \right\rvert }
\newcommand{\set}[1]{\left\{ #1 \right\} }
\newcommand{\tensor}{\otimes}
\newcommand{\grad}{\nabla}
\newcommand{\til}{\widetilde}
\renewcommand{\bar}{\overline}
\newcommand{\la}{\lambda}
\newcommand{\ga}{\gamma}
\newcommand{\Ga}{\Gamma}
\renewcommand{\(}{\left(}
\renewcommand{\)}{\right)}
\renewcommand{\[}{\left[}
\renewcommand{\]}{\right]}
\newcommand{\ds}{\displaystyle}
\newcommand{\scs}{\scriptstyle}
\newtheorem{theorem}{Theorem}[section]
\newtheorem*{theorem*}{Theorem}
\newtheorem{prop}[theorem]{Proposition}
\newtheorem{lemma}[theorem]{Lemma}
\newtheorem{cor}[theorem]{Corollary}
\newtheorem{corollary}[theorem]{Corollary}
\newtheorem{example}[theorem]{Example}
\newtheorem{remark}[theorem]{Remark}
\newtheorem{remarks}[theorem]{Remarks}
\title{Smooth Volume Rigidity for Manifolds with Negatively Curved Targets}
\author[Chris Connell]{Chris Connell$^\dagger$}
\thanks{$\dagger$ Supported in part by an NSF grant DMS-0420432.}
\begin{document}

\begin{abstract}
  We establish conditions for a continuous map of
  nonzero degree between a smooth closed manifold and a negatively
  curved manifold of dimension greater than four to be homotopic to
  a smooth cover, and in particular a diffeomorphism when the degree
  is one. The conditions hold when the volumes or entropy-volumes of
  the two manifolds differ by less than a uniform constant after an
  appropriate normalization of the metrics. The results are
  qualitatively sharp in the sense that all dependencies are
  necessary. We present a number of corollaries including a
  corresponding finiteness result. Notably, the method of proof does not rely
  on a $C^\alpha$ or Gromov-Hausdorff precompactness result nor
  on surgery technology.

\end{abstract}

\maketitle

\section{Introduction}
A basic topological question asks when a continuous map of degree
one between two smooth manifolds is homotopic to a diffeomorphism.
In a series of papers (see
\cite{Farrell-Jones:89a},\cite{Farrell-Jones:90},\cite{Farrell-Jones:93}),
Farrell and Jones established their celebrated topological rigidity
result stating that any homotopy equivalence between any closed
manifold and a closed nonpositively curved manifold of dimension at
least $5$ is homotopic to a homeomorphism. However, they also showed
in \cite{Farrell-Jones:89c} that smooth rigidity fails; there are
closed negatively curved Riemannian manifolds $(M,g)$ and $(N,g_o)$
which are homeomorphic but not diffeomorphic. Moreover, for any
$\delta>0$ they have examples where the sectional curvatures of $N$
satisfy $K_{g_o}\equiv -1$ and those of $M$ satisfy $-1-\delta\leq
K_{g} \leq -1$.  In a separate paper, \cite{Farrell-Jones:94c}, they
also gave a set of four criteria, in terms of an ideal boundary
conjugacy, for when a homotopy equivalence between two nonpositively
curved manifold may be realized by a diffeomorphism (see Section
\ref{sec:applications} for details).

The main purposes of this paper is to establish a volumetric condition for the
smooth rigidity of continuous maps with negatively curved targets. We will also
present some generalizations and corollaries.

\begin{theorem}[Volume Gap]\label{thm:vol} Let $f:M\to N$ be any
continuous map between two smooth closed manifolds of dimension
$n>4$. Choose any Riemannian metric $g$ on $M$ normalized to have
sectional curvature bound $ K_{g} \geq -1$, and suppose $N$ admits a
negatively curved metric $g_o$ normalized to have $-\rho^2\leq
 K_{g_o} \leq -1$. There is a constant $C>0$ such that if
$$\vol_{g}(M)\le \abs{\deg(f)}\vol_{g_o}(N)+{C},$$ then $f$ is
homotopic to a smooth covering map of degree $\abs{\deg(f)}$. Moreover, $C$
depends only on  $n$, the injectivity radius of $(M,g)$, the pinching constant
$\rho\geq 1$ and  $\abs{\deg(f)}\norm{N}$, where $\norm{N}$ is the
simplicial volume of $N$.
\end{theorem}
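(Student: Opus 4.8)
The plan is to adapt the Besson--Courtois--Gallot natural-map construction to the negatively curved target, extract a pointwise Jacobian bound from volume comparison, convert the hypothesized volume gap into an $L^1$-smallness statement for the Jacobian deficit, and finally promote this to the conclusion that $f$ is homotopic to a smooth covering using the lower injectivity radius bound and topological rigidity in dimension $\geq 5$.

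\emph{Step 1 (natural maps).} For each $c$ above the volume entropy $\htop(g)$ of $(M,g)$, lift $f$ to $\widetilde f\colon\widetilde M\to\widetilde N$; for $x\in\widetilde M$ push the finite measure $e^{-c\,d_{\widetilde g}(x,\cdot)}\dvol_{\widetilde g}$ forward by $\widetilde f$ and normalize, obtaining a probability measure $\sigma^c_x$ on the Hadamard manifold $\widetilde N$ with finite moments (using $c>\htop(g)$ and that $\widetilde f$ is Lipschitz). Setting $F_c(x)=\Bar(\sigma^c_x)$, equivariance and the implicit function theorem make $F_c$ a smooth map $M\to N$ homotopic to $f$. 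Differentiating the barycenter equation and using that the relevant averaged Hessian operator on $\widetilde N$ is $\geq\Id$ (because $K_{g_o}\leq-1$), while the $x$-variation of $\sigma^c_x$ has total variation $\lesssim c$, yields an a priori bound $\norm{dF_c}\leq C_1(n,\rho)$; the BCG trace/determinant inequalities give $\abs{\Jac F_c(x)}\leq\bigl(c/(n-1)\bigr)^n$ everywhere.

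\emph{Step 2 (volume gap $\Rightarrow$ small bad set).} Bishop--Gromov comparison gives $K_g\geq-1\Rightarrow\htop(g)\leq n-1$, so letting $c\downarrow\htop(g)$ produces a smooth $F\colon M\to N$ homotopic to $f$ with $\norm{dF}\leq C_1(n,\rho)$ and $\abs{\Jac F}\leq\bigl(\htop(g)/(n-1)\bigr)^n\leq1$ everywhere. Since $\deg F=\deg f$,
\[
\abs{\deg f}\,\vol_{g_o}(N)=\Bigl|\int_M\Jac F\,\dvol_g\Bigr|\leq\int_M\abs{\Jac F}\,\dvol_g\leq\vol_g(M),
\]
so the hypothesis forces both $\int_M(1-\abs{\Jac F})\,\dvol_g\leq C$ and (taking $\deg f>0$, say) $\int_{\{\Jac F\leq0\}}\abs{\Jac F}\,\dvol_g\leq C/2$. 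As $\abs{\Jac F}\leq1$, these make the bad set $\CB:=\{x:\Jac F(x)<\tfrac12\}$ satisfy $\vol_g(\CB)\leq 3C$, while on $M\setminus\CB$ the map $F$ is an orientation-preserving submersion with $\tfrac12\leq\Jac F\leq1$ and $\norm{dF}\leq C_1(n,\rho)$.

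\emph{Step 3 (endgame).} Replace $N$ by the cover $\widehat N\to N$ corresponding to $F_*\pi_1(M)$: it is closed and negatively curved with the same pinching $\rho$, has $\norm{\widehat N}\leq\abs{\deg f}\,\norm{N}$, and the volume hypothesis is inherited by the lift $\widehat F\colon M\to\widehat N$, so we may assume $F_*$ is $\pi_1$-surjective (this is the source of the dependence of $C$ on $\abs{\deg f}\,\norm{N}$). Because $(M,g)$ has injectivity radius bounded below, Croke's inequality gives $\vol_g(B_r(x))\geq c_n r^n$ for $r\leq\tfrac12\injrad(M,g)$, so the bad set, of measure $\leq 3C$, meets only a controlled portion of $M$, and $\norm{dF}\leq C_1(n,\rho)$ bounds $\vol_{g_o}(F(\CB))$. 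Choosing $C$ below an explicit threshold $\varepsilon_0\bigl(n,\rho,\injrad(M,g),\abs{\deg f}\,\norm{N}\bigr)$, one argues that $\CB$ cannot obstruct $F$ from being a homotopy equivalence (in particular $M$ is forced to be aspherical); since $\dim=n\geq5$ and $\widehat N$ is closed nonpositively curved, Farrell--Jones topological rigidity then makes $F$ homotopic to a homeomorphism, which the smooth control on $F$ off $\CB$ upgrades to a smooth covering of degree $\abs{\deg f}$, whence $f$ itself is homotopic to such a covering.

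\emph{Main obstacle.} The difficulty lies entirely in Step 3. The natural map $F$ admits no $C^2$-bound uniform in the permitted data --- controlling $\nabla dF$ would require bounding $\nabla R_{g_o}$, which is not among $n$, $\rho$, $\injrad(M,g)$, $\abs{\deg f}\,\norm{N}$ --- so one cannot naively propagate the $L^1$-smallness of the Jacobian deficit to pointwise nonvanishing of $\Jac F$ and conclude at once that $F$ is a local diffeomorphism. What must be shown instead is that a set of small volume is topologically inert for a $C^1$-controlled degree-one map between the given manifolds; this is precisely what forces $n>4$ and the simplicial volume term into the statement, and a proof avoiding surgery (as the author indicates in the abstract) presumably replaces the Farrell--Jones step above by a direct geometric argument at exactly this point.
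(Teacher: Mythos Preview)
Your Steps 1 and 2 are essentially the paper's setup, but Step 3 has a genuine gap and diverges sharply from what the paper actually does.

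First, the proposed use of Farrell--Jones cannot close the argument. Topological rigidity yields only a homeomorphism, and your sentence ``which the smooth control on $F$ off $\CB$ upgrades to a smooth covering'' is the whole problem: there is no mechanism by which a $C^1$ map that is a local diffeomorphism \emph{outside} a small-volume set can be promoted to a global diffeomorphism via a homeomorphism coming from surgery. Indeed the Farrell--Jones exotic examples discussed in the paper are precisely homeomorphic, non-diffeomorphic, negatively curved pairs, so ``homeomorphism plus partial smooth control'' cannot in general produce a diffeomorphism. The abstract's promise that the proof avoids surgery is not a stylistic preference but a reflection of this obstruction.

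Second, you correctly diagnose in your ``Main obstacle'' that the missing ingredient is a way to pass from $L^1$-smallness of the Jacobian deficit to \emph{pointwise} nonvanishing, and you are right that a naive $C^2$ bound on $F$ would require control of $\nabla Rm(g_o)$, which is not among the allowed data. The paper's resolution, which you do not anticipate, has two parts. (i) One may \emph{acquire} control of $\nabla Rm(g_o)$: Ricci-flow smoothing (Bemelmans--Min-Oo--Ruh, Shi, Rong) replaces $g_o$ by a $C^1$-close metric with the same pinching up to a controlled error and with $\norm{\nabla Rm}$ bounded by a constant depending only on $n$ and $\rho$; the change in $u(g_o)^n\vol_{g_o}(N)$ is bounded using that $\vol_{g_o}(N)$ is comparable to $\norm{N}$ under the pinching hypothesis, which is exactly where $\abs{\deg f}\,\norm{N}$ enters. (ii) The paper then proves a pointwise bound $\norm{\nabla\Jac F_s}\le s^n\bigl(C(n)(1+s)(1+\eta^{-n})+5s\beta\bigr)$ with $\beta=\max\{\rho^3,\norm{\nabla Rm(g_o)}\}$ and $\eta=\injrad(\til g)$; this is the paper's main analytic work (Sections~3--4). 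Given this gradient bound, any critical point $p$ of $F_s$ forces $\abs{\Jac F_s}\le\tfrac12(s/u(g_o))^n$ on a ball $B(p,r)$ of radius $r$ depending only on the stated constants, and then the degree integral gives
\[
\abs{\deg f}\,\vol_{g_o}(N)\le\Bigl(\tfrac{s}{u(g_o)}\Bigr)^n\Bigl(\vol_g(M)-\tfrac12\vol_g(B(p,r))\Bigr).
\]
Letting $s\to h(g)\le n-1\le u(g_o)$ and using Berger/Croke to bound $\vol_g(B(p,r))$ below by $C(n)\min\{r,\injrad(g)\}^n$ yields a definite gap; choosing $C$ below it forces $F_s$ to have no critical points, so $F_s$ is already a $C^1$ local diffeomorphism and hence a covering. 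No appeal to topological rigidity is made.

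In short: the idea you are missing is not a topological one but a geometric one---smooth the target metric to recover derivative-of-curvature control, then prove a gradient estimate for $\Jac F_s$ so that the $L^1$ bound of your Step 2 becomes the pointwise statement $\Jac F_s\neq 0$ directly.
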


\begin{remarks}\label{rem:theorem1}\noindent
\begin{itemize}
\item The constant $C$ always satisfies $C<\vol_{g}(M)$ so that the volume
constraint is never satisfied when $f$ has degree zero. When the degree is
not zero, the resulting local diffeomorphism is given by an explicit
construction from the original continuous map $f$. We will show with some
examples (see \ref{ex:conditions}) that the dependence of the constant $C$
on both the injectivity radius of $(M,g)$ and $\rho$ is necessary. Also, the
injectivity radius dependency can be exchanged for a lower bound on the
normalized volume of sufficiently small balls such as $\inf_{p\in M}\inf_{0<
r\leq 1}\frac{1}{r^n}\vol_{g}(B(p,r))$.

\item  Note that $\norm{N}$ only depends on $\pi_1(N)$ and in even
dimensions we may replace the dependence of $C$ on
$\norm{M}$ by $\abs{\chi(N)}$.  Also, we will see that the dependence of
$C$ on $\abs{\deg(f)}\norm{N}$ can also be exchanged for a dependence
on $\abs{\nabla Rm(g_o)}$, where $Rm$ is the curvature operator on
$\Lambda^2TN$. Thus one may remove the dependence of $C$ on
$\abs{\deg(f)}$ if needed.

\item Under the hypotheses of Theorem \ref{thm:vol}, Besson, Courtois and
Gallot proved in \cite{Besson-Courtois-Gallot:98} that
$$\vol_{g}(M)\geq \vert\deg(f)\vert\vol_{g_o}(N)$$ with equality
being achieved if and only if $N$ and $M$ both have constant curvature $-1$
and $f$ is homotopic to a Riemannian cover. From this point of view,
Theorem \ref{thm:vol} can be viewed as a coarse (topological) version of
their result.

\item If $(M,g)$ satisfies the hypotheses of Theorem \ref{thm:vol} for some
$(N,g_o)$ and fixed value of the constant $C>0$, then $(M,g)$ has Ricci
curvatures bounded below, and injectivity radius and volume bounded
above. Hence by Theorem 0.2 of \cite{Anderson-Cheeger:92a}, it was already
known that there are at most a finite number of possible diffeomorphism
types for such $(M,g)$.

\item Bessi\`{e}res (\cite{Bessieres:98}) first established the special case of
Theorem \ref{thm:vol} when $C=0$ and $(N,g_o)$ is hyperbolic. Specifically,
he extends the main result of \cite{Besson-Courtois-Gallot:95} to show that if
$f:M \to N$ is a map of nonzero degree with $N$ hyperbolic and
$$\Minvol(M)= \abs{\deg(f)}\Minvol(N)=\abs{\deg(f)}\vol_{g_o}(N),$$ then
$M$ admits a hyperbolic metric and $f$ is homotopic to a smooth cover of
degree $\deg(f)$. Moreover in \cite{Bessieres:00a}, he produces an example
of a noncompact finite volume hyperbolic manifold $N$ and a manifold $M$,
not homeomorphic to $N$, together with a degree one map $f:M\to N$ such
that $\Minvol(M)\leq \Minvol(N)$ and their simplicial volumes coincide,
$\norm{M}=\norm{N}$. Hence, a finite volume version of Theorem
\ref{thm:vol} must address more than just a suitable replacement for the
dependence of $C$ on the injectivity radius.
\end{itemize}
\end{remarks}

We will derive Theorem \ref{thm:vol} as a special case of two other
progressively more general results. For any finite volume Riemannian
manifold $(M,g)$, define the {\em volume growth entropy} of the
metric $g$ to be,
$$h(g)=\limsup_{R\to\infty}\frac{\log \vol_{g}(B(x,R))}{R}$$
where $B(x,R)$ is the ball of radius $R$ in the Riemannian universal
cover $\til{M}$ about $x\in \til{M}$. The definition is independent
of $x$. Moreover Manning showed that for $M$ closed and
nonpositively curved, the limit always exists and equals the
topological entropy of the geodesic flow on $M$ (\cite{Manning:79}).

For a negatively curved Riemannian manifold $(N,g_o)$ we can
consider the quantity
$$u(g_o)=\inf_{x\in N} \inf_{\la\in \mc{P}(S_xN)}\sqrt{n}\(\frac{\det
\la\(\op{Hess}_x(B_v)\)}{\sqrt{\det\la\(v\tensor
v\)}}\)^{\frac{1}{n}}$$
where $\mc{P}(S_xN)$ is the space of probability measures on the
unit tangent sphere $S_xN$, $v\in S_xN$ is the variable of
integration by $\lambda$ and $B_v$ is the Busemann function
associated to $v$. There are four important properties of the
quantity $u(g_o)$ which can be easily derived from the work in
\cite{Besson-Courtois-Gallot:99}: it scales the same way the entropy
does, namely $u(c\cdot g_o)=\sqrt{c}\ u(g_o)$, it satisfies
$u(g_o)\geq a(n-1)$ (resp. $u(g_o)\leq b(n-1)$) whenever $ K_{g_o}
\leq -a^2$ (resp. $ K_{g_o} \geq -b^2$), $h(g_o)\geq u(g_o)$ and if
$g_0$ is a locally symmetric metric, then $u(g_0)=h(g_0)$.

We now establish some notation for what follows. For any Riemannian
manifold $(M,g)$,  we denote its injectivity radius by $\injrad(g)$ and  its
universal cover by $(\til{M},\til{g})$. Set
$\kappa(g)=\sqrt{\abs{\inf_{P\in\op{Gr}_2(TM)} K_{g}(P) }}$. Whenever
$\kappa(g)=0$, we have $\Ric(g)\geq 0$ and so $h(g)=0$. Therefore  by the
aforementioned result of \cite{Besson-Courtois-Gallot:95}, if $N$ admits a
negatively curved metric and there is a map $f:M\to N$ of nonzero degree, then
$\kappa(g)>0$. For any negatively curved Riemannian manifold $(N,g_o)$ we
define the {\em pinching constant} to be
$\ds{\rho(g_o)=\sqrt{\frac{\inf_{P\in\op{Gr}_2(TN)} K_{g_o}(P)
}{\sup_{P\in\op{Gr}_2(TN)} K_{g_o}(P) }}}.$ We now state the normalization
free version of Theorem \ref{thm:vol}.

\begin{theorem}[Smooth Entropy-Volume Rigidity]\label{thm:ent-vol}
Let $f:M\to N$ be a continuous map of nonzero degree between any closed
Riemannian manifold $(M,g)$ and a closed negatively curved manifold
$(N,g_o)$ of dimension $n>4$. There is a constant $C$ depending only on
$\frac{h(g)}{\kappa(g)}$, $\kappa(g)\cdot \injrad(g),$
$\abs{\deg(f)}\norm{N}$ and $\rho(g_o)$ such that if

$$h(g)^n\vol_{g}(M)\le \abs{\deg(f)}u(g_o)^n\vol_{g_o}(N)+C,$$
then $f$ is homotopic to a smooth covering map of degree $\abs{\deg(f)}$.
\end{theorem}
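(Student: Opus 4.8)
The plan is to run the natural-map (barycenter) machinery of Besson--Courtois--Gallot in a quantitative, \emph{non-sharp} form and combine it with a degree-theoretic local-diffeomorphism criterion. First I would recall that, given $f:M\to N$ of nonzero degree, there is the Besson--Courtois--Gallot family of natural maps $F_c:\til M\to\til N$ built by pushing forward the normalized Patterson--Sullivan-type measures $\mu_x^c=e^{-c\,B(v,x)}\,d\mu(v)$ on the boundary at infinity through the barycenter construction in $\til N$ (using strict negative curvature of $g_o$ so barycenters of nonatomic measures are well defined); for $c>h(g)$ these converge, descend to a $\pi_1$-equivariant map, and satisfy a pointwise Jacobian bound of the form $|\Jac F_c(x)|\le \bigl(\tfrac{c}{u(g_o)}\bigr)^n$ after the appropriate normalization — this is exactly where the quantity $u(g_o)$ and its four listed properties enter, and it is what yields, upon integrating over $M$ and letting $c\downarrow h(g)$, the basic inequality $h(g)^n\vol_g(M)\ge |\deg f|\,u(g_o)^n\vol_{g_o}(N)$ with the standard rigidity in the equality case.

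The second step is to upgrade ``volume almost extremal'' to ``$F_{h(g)}$ is almost an isometric immersion everywhere.'' Here the hypothesis $h(g)^n\vol_g(M)\le |\deg f|\,u(g_o)^n\vol_{g_o}(N)+C$ forces $\int_M\bigl((c/u(g_o))^n-|\Jac F_c|\bigr)\,d\vol_g$ to be small; since the integrand is nonnegative, $|\Jac F_c(x)|$ is $L^1$-close to its maximal value. To convert this $L^1$-smallness into a \emph{pointwise} (sup-norm) statement one needs a priori regularity: uniform $C^1$ (indeed $C^{1,\alpha}$) bounds on $F_c$ in harmonic coordinates, which follow from elliptic estimates once one has uniform bounds on $\|DF_c\|$ and on the geometry of $(N,g_o)$ (curvature and its covariant derivatives, controlled via $\rho(g_o)$ and, as the remark after Theorem~\ref{thm:vol} notes, via $|\nabla Rm(g_o)|$ or, equivalently for the bookkeeping, via $|\deg(f)|\,\norm N$). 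Against this equicontinuity, an $L^1$-small nonnegative function with a modulus of continuity is uniformly small; hence $|\Jac F_c(x)|\ge (c/u(g_o))^n-\eta(C)$ for every $x$, with $\eta(C)\to0$ as $C\to0$. Feeding this back through the algebraic inequalities in the BCG estimate (the gap between $\det$ of an averaged Hessian and the naive bound is controlled) shows $DF_c(x)$ is, up to scale $c/u(g_o)$, within $\eta'(C)$ of an isometry at every point.

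The third step is the topological conclusion. A $\pi_1$-equivariant map $\til F:\til M\to\til N$ whose differential is everywhere within a small enough neighborhood of the orthogonal group (times a fixed positive scalar) is a local diffeomorphism, hence a covering map $\til M\to\til N$; but $\til N=\H^n$-like (contractible, being the universal cover of a negatively curved manifold), so $\til F$ is a diffeomorphism, and the induced map $M\to N$ is a smooth covering of degree $|\deg f|$. Smoothness is arranged by first smoothing $F_c$ by a small perturbation (possible once the differential is uniformly nondegenerate) and noting the perturbation is homotopic to the original; the homotopy from $f$ to this smooth cover is the concatenation of the canonical homotopy $f\simeq F_c$ (from the barycenter construction, valid because target is aspherical) with the small perturbation. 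The constant $C$ is then whatever threshold makes $\eta'(C)$ smaller than the dimensional constant needed for ``differential in a neighborhood of $O(n)$ implies local diffeo,'' and by construction it depends only on $n$, on $h(g)/\kappa(g)$ and $\kappa(g)\injrad(g)$ (which control the a priori geometry and the lower volume bound on small balls needed for the $L^1\Rightarrow L^\infty$ step), on $\rho(g_o)$, and on $|\deg f|\,\norm N$.

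The main obstacle is the second step: making the passage from the integral Jacobian defect to a pointwise estimate genuinely quantitative and uniform. This requires (a) a scale-invariant local volume lower bound on $(M,g)$ — supplied by $\kappa(g)\injrad(g)$, or equivalently by the normalized small-ball volume quantity mentioned in Remark~\ref{rem:theorem1} — to control the measure of the set where $|\Jac F_c|$ could dip; and (b) uniform $C^{1,\alpha}$ control of $F_c$, which in turn demands uniform two-sided control of the target geometry and is the reason the dependence on $\abs{\deg(f)}\norm N$ (or $|\nabla Rm(g_o)|$) cannot be dispensed with for free. Once these a priori estimates are in place, everything else is a matter of chasing $\varepsilon$'s through the BCG inequalities and invoking the elementary fact that near-isometries between simply connected manifolds, one of which is a negatively curved universal cover, are diffeomorphisms. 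Theorem~\ref{thm:vol} is then the special case $h(g)=\kappa(g)=1$ (from $K_g\ge-1$ after using Manning's identity and the curvature normalization) together with the bounds $u(g_o)\ge n-1\ge h(g_o)\cdot(\text{something})$ forced by $-\rho^2\le K_{g_o}\le-1$, which make the two normalized volumes directly comparable.
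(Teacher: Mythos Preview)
Your overall architecture matches the paper's: build the BCG natural map $F_s$, use the pointwise bound $|\Jac F_s|\le (s/u(g_o))^n$, observe that the volume hypothesis forces the $L^1$-defect $\int_M\bigl((s/u(g_o))^n-|\Jac F_s|\bigr)$ to be small, and convert this to a pointwise lower bound on $|\Jac F_s|$ so that $F_s$ becomes a local diffeomorphism and hence a covering. You also correctly identify the main obstacle as the $L^1\Rightarrow L^\infty$ step and recognize that what is needed is a modulus of continuity for $|\Jac F_s|$, controlled by the listed invariants.

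The gap is in how you propose to obtain that modulus. You invoke ``$C^{1,\alpha}$ bounds on $F_s$ in harmonic coordinates, which follow from elliptic estimates.'' But $F_s$ does not satisfy any elliptic PDE: it is defined \emph{implicitly} as the barycenter of a family of boundary measures, and there is no Schauder or $L^p$ theory to call on. The paper instead proves a direct Lipschitz bound on $\Jac F_s$ (Theorem~\ref{thm:gradest}) by differentiating the implicit equation. Writing $d_yF_s=sA^{-1}H$ with $A=\int_{\partial\til N} DdB_\theta\,d\sigma_y^s$ and $H$ the integrated rank-one tensor, one has
\[
\nabla_v\Jac F_s=s^n\,\frac{\det H}{\det A}\Bigl(\tr(H^{-1}\nabla_vH)-\tr(A^{-1}\nabla_vA)\Bigr),
\]
and each piece is bounded by hand in Sections~\ref{sec:Jac}--\ref{sec:synthesis}. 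This is where $\rho(g_o)$ and $\|\nabla Rm(g_o)\|$ enter (hence $|\deg f|\,\|N\|$, after a preliminary Ricci-flow smoothing of $g_o$), through a Riccati-type estimate on $\nabla DdB_\theta$ (Lemma~\ref{lem:DDdB}); and where $\kappa(g)\injrad(g)$ enters, through a bound on $\int_{\til M}|\Delta r_z|\,d\mu_y^s$. The delicate point is that $H^{-1}$ and $A^{-1}$ blow up near the critical locus, so the two trace terms must be played off against $\det H/\det A$; the algebra closes only when $n>4$, which is exactly where the dimensional restriction is consumed.

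Two minor remarks. First, you do not need $DF_s$ to be ``within $\eta'$ of an isometry'': nonvanishing of $\Jac F_s$ already makes $F_s$ a local diffeomorphism between closed manifolds, hence a covering map of the correct degree. Second, the paper packages the argument slightly differently: it first proves the sharper Theorem~\ref{thm:critical}, which says that $\frac12 h(g)^n\vol_g\bigl(T_{r/\kappa(g)}(F_s)\bigr)$ must appear as an excess in the entropy--volume inequality; then, choosing $C$ below the volume of a single ball of radius $r/\kappa(g)$ (controlled via Berger--Croke by $\kappa(g)\injrad(g)$), the hypothesis forces the critical locus to be empty.
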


\begin{remark}
Here the quantities $C$, $\frac{h(g)}{\kappa(g)}$, $\kappa(g)\cdot \injrad(g)$,
$\rho(g_o)$, $h(g)^n\vol_{g}(M)$ and $u(g_o)^n\vol_{g_o}(N)$ are all
invariant under scaling either of the metrics $g$ or $g_o$. Moreover, $C$
necessarily tends to $0$ if either $\kappa(g)\cdot\injrad(g)$ or
$\frac{h(g)}{\kappa(g)}$ tends to zero or if
$\rho(g_o)$  tends to infinity. %
(see \ref{ex:conditions}).

\end{remark}

For any closed orientable topological $n$-manifold $N$ admitting a metric of
negative curvature, we let $\mc{M}_{\delta,k}(N)$ be the family of Riemannian
$n$-manifolds $(M,g)$ with $\kappa(g)\injrad(g)>\delta$ and admitting a
degree $k$ continuous map to a fixed topological manifold $N$. Similarly we
define $\mc{N}_{n,\rho}$ to be the family of closed n-manifolds $(N,g_o)$ with
$-a^2\rho^2\leq K_{g_o}\leq -a^2$ for any $a>0$.

We can optimize each side of the inequality in Theorem \ref{thm:ent-vol} over
any smooth equivalence class of metrics as follows. Suppose $M_\phi$ and
$N_{\phi_o}$ represent topological $n$-manifolds $M$ and $N$ for $n>4$
equipped with two specific smooth structures $\phi$ and $\phi_o$ respectively.
By passing to subsequences, we may always choose a sequence $\set{g_i}$ of
metrics achieving the infimum,
$$\inf_{(M_\phi,g)\in\mc{M}_{\delta,k}(N)}h(g)^n\vol_{g}(M)$$
such that the limits $\underline{\vol}_\delta(M_\phi)=\lim_i \vol_{g_i}(M)$ and
$\underline{h}=\lim_i h(g_i)$ both exist. Similarly define the supremum of the
volumes of $N$ by metrics $g_o$ rescaled so that $u(g_o)=\underline{h}$ to be
$$\bar{\vol}_\rho(N_{\phi_o})=\sup_{\set{g_o\,|\,(N_{\phi_o},g_o)\in
\mc{N}_{n,\rho}\ \text{and}\ u(g_o)=\underline{h}}}\vol_{g_o}(N).$$

The following is an immediate corollary of Theorem \ref{thm:ent-vol} and
Remarks \ref{rem:theorem1}.

\begin{corollary}\label{cor:supvol}
For given smooth topological manifolds $M_\phi$ and $N_{\phi_o}$ of
dimension $n>4$ as above, there is a constant $C>0$ depending only on
$\delta$ and $\rho$ such that $M_{\phi}$ is diffeomorphic to a degree $k$
cover of $N_{\phi_o}$ if
$$\underline{\vol}_\delta(M_\phi)\leq k\ \bar{\vol}_\rho(N_{\phi_o})+C.$$ 

\end{corollary}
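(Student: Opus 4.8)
The plan is to get this from Theorem~\ref{thm:ent-vol} by feeding it near-optimal metrics and checking that its constant stays uniform. If $\mc{M}_{\delta,k}(N)=\emptyset$ then $\underline{\vol}_\delta(M_\phi)=+\infty$ and the assertion is vacuous, so assume there is a degree $k\neq 0$ map $f\colon M_\phi\to N_{\phi_o}$ and let $\{g_i\}$ be a minimizing sequence on $M_\phi$ as in the paragraph before the corollary: $(M_\phi,g_i)\in\mc{M}_{\delta,k}(N)$, $\vol_{g_i}(M)\to\underline{\vol}_\delta(M_\phi)$, $h(g_i)\to\underline h$. Since $h(g)^n\vol_g(M)$, $\kappa(g)\injrad(g)$, $h(g)/\kappa(g)$ and membership in $\mc{M}_{\delta,k}(N)$ are all scale invariant, rescaling each $g_i$ costs nothing, so normalize so that $\underline h=1$ (this pins the scale; the corollary's hypothesis tacitly presupposes such a normalization). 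Likewise choose metrics $g_o^{(j)}$ on $N_{\phi_o}$ with $(N_{\phi_o},g_o^{(j)})\in\mc{N}_{n,\rho}$, $u(g_o^{(j)})=1$ and $\vol_{g_o^{(j)}}(N)\to\bar{\vol}_\rho(N_{\phi_o})$. Then $h(g_i)^n\vol_{g_i}(M)\to\underline{\vol}_\delta(M_\phi)$ and $u(g_o^{(j)})^n\vol_{g_o^{(j)}}(N)\to\bar{\vol}_\rho(N_{\phi_o})$, so the hypothesis $\underline{\vol}_\delta(M_\phi)\le k\,\bar{\vol}_\rho(N_{\phi_o})+C$ reads, in the limit,
$$\lim_i h(g_i)^n\vol_{g_i}(M)\ \le\ \lim_j k\,u(g_o^{(j)})^n\vol_{g_o^{(j)}}(N)+C.$$

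Next I would check that the constant Theorem~\ref{thm:ent-vol} attaches to the map $f\colon(M_\phi,g_i)\to(N_{\phi_o},g_o^{(j)})$ can be chosen independent of $i$ and $j$; it depends only on $h(g_i)/\kappa(g_i)$, $\kappa(g_i)\injrad(g_i)$, $k\norm{N}$ and $\rho(g_o^{(j)})$. By the definitions of the two families $\kappa(g_i)\injrad(g_i)>\delta$ and $\rho(g_o^{(j)})\le\rho$; the Bishop--Gromov comparison applied to $K_{g_i}\ge-\kappa(g_i)^2$ gives $h(g_i)/\kappa(g_i)\le n-1$; and $k\norm{N}$ is a fixed number depending only on $k$ and $\pi_1(N)$, which by Remarks~\ref{rem:theorem1} may moreover be traded for $\abs{\chi(N)}$ in even dimensions or for a bound on $\abs{\nabla Rm(g_o)}$, and is in any case part of the given data. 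The one substantive point is a uniform positive lower bound $h(g_i)/\kappa(g_i)\ge c(n,\delta,\rho,k\norm{N})>0$. Since $h(g_i)\to1$, this says $\kappa(g_i)$ stays bounded along the minimizing sequence; one gets it by first using the Besson--Courtois--Gallot inequality $h(g_i)^n\vol_{g_i}(M)\ge k\,u(g_o^{(j)})^n\vol_{g_o^{(j)}}(N)$ together with Gromov's lower bound $\vol_{g_o^{(j)}}(N)\ge c(n,\rho)\norm{N}$ (which uses $u(g_o^{(j)})=1$ and $-a^2\rho^2\le K_{g_o^{(j)}}\le-a^2$, forcing $a\le1/(n-1)$) to pinch $\vol_{g_i}(M)$ between two positive finite constants, and then arguing that a minimizing sequence may be taken with bounded curvature, since a blow-up of $\kappa$ accompanies a collapse that can be undone without increasing $h(g)^n\vol_g(M)$. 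Granting this, Theorem~\ref{thm:ent-vol} supplies a single constant $C_2=C_2(n,\delta,\rho,k\norm{N})>0$ valid for every pair $(g_i,g_o^{(j)})$.

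Finally take $C=C_2$. For all large $i,j$ the displayed inequality becomes $h(g_i)^n\vol_{g_i}(M)\le k\,u(g_o^{(j)})^n\vol_{g_o^{(j)}}(N)+C_2$, precisely the hypothesis of Theorem~\ref{thm:ent-vol} for $f\colon(M_\phi,g_i)\to(N_{\phi_o},g_o^{(j)})$ (recall $n>4$). That theorem then makes $f$ homotopic to a smooth covering map of degree $k$ from $M_\phi$ to $N_{\phi_o}$, whence $M_\phi$ is diffeomorphic to a degree $k$ cover of $N_{\phi_o}$; beyond the fixed topological data the constant depends only on $\delta$ and $\rho$, as claimed. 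The main obstacle is exactly this uniform lower bound on $h(g_i)/\kappa(g_i)$: by the Remark after Theorem~\ref{thm:ent-vol} the constant $C_2$ collapses to $0$ as $h/\kappa\to0$, so one cannot tolerate a minimizing sequence along which the curvature degenerates, and ruling that out — not the bookkeeping with the infima and suprema — is where the actual work lies.
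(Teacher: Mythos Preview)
The paper offers no proof beyond declaring the result ``an immediate corollary of Theorem~\ref{thm:ent-vol} and Remarks~\ref{rem:theorem1}.'' Your argument is the natural unpacking of that claim and is correct in outline: pick near-optimal metrics on each side, verify that the parameters entering the constant of Theorem~\ref{thm:ent-vol} are uniformly controlled ($\kappa(g_i)\injrad(g_i)>\delta$, $\rho(g_o^{(j)})\le\rho$, $h(g_i)/\kappa(g_i)\le n-1$, and $k\norm{N}$ fixed with the topological data), and apply the theorem for large $i,j$. That is exactly what the paper has in mind.

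You are also right to isolate the lower bound on $h(g_i)/\kappa(g_i)$ as the only substantive point, since by the Remark following Theorem~\ref{thm:ent-vol} the constant collapses as this ratio tends to zero. The paper does not address this, and your sketch (``a blow-up of $\kappa$ accompanies a collapse that can be undone'') is not a proof either. There is, however, a cleaner fix that avoids any geometric work: the definitions of $\underline h$ and $\underline{\vol}_\delta(M_\phi)$ already involve a choice of minimizing subsequence, so normalize $\kappa(g_i)=1$ as part of that choice. If then $\underline h=0$, the Besson--Courtois--Gallot inequality $h(g_i)^n\vol_{g_i}(M)\ge k\,u(g_o)^n\vol_{g_o}(N)>0$ forces $\vol_{g_i}(M)\to\infty$, so $\underline{\vol}_\delta(M_\phi)=\infty$; simultaneously the constraint $u(g_o)=\underline h=0$ is impossible for a negatively curved metric, so $\bar{\vol}_\rho(N_{\phi_o})$ is a supremum over the empty set and the hypothesis of the corollary is vacuous. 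Otherwise $\underline h>0$, whence $h(g_i)/\kappa(g_i)=h(g_i)\ge\underline h/2>0$ for all large $i$, and your uniform constant $C_2$ exists. So the obstacle you worry about is disposed of by a vacuity argument rather than by controlling curvature along the sequence; with that adjustment your write-up is more careful than the paper's one-line dismissal.
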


The following theorem of Gromov (1.7 of \cite{Gromov:78b}) shows that the
additive curvature pinching constant, $\eps$, in the Farrell and Jones examples
must depend on the volume of $N$.
\begin{theorem}\label{thm:Gromov}
For $(M,g)$ closed of dimension $n\geq 4$, there is a $\eps>0$ depending only
on an upper bound for $\vol_{g}(M)$ such that if $-1-\eps\leq  K_{g} \leq -1$,
then $M$ is diffeomorphic to a hyperbolic manifold.
\end{theorem}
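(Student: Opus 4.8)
The plan is to argue by contradiction via a compactness argument: from a hypothetical sequence of counterexamples one extracts a hyperbolic limit, and this forces the counterexamples themselves to be hyperbolic. Suppose the conclusion fails for some bound $V$, so that there are closed $n$-manifolds $(M_i,g_i)$ with $n\ge4$, $-1-\eps_i\le K_{g_i}\le-1$, $\eps_i\to0$, $\vol_{g_i}(M_i)\le V$, and no $M_i$ diffeomorphic to a hyperbolic manifold.

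Fix a Margulis constant $\mu=\mu(n)>0$ valid for curvature in $[-2,-1]$ and take the thick--thin decomposition $M_i=M_i^{\ge\mu}\cup M_i^{<\mu}$. By the Margulis lemma, negativity of the curvature (ruling out $\Z^2$ subgroups of $\pi_1$) and torsion-freeness, every component $T$ of $M_i^{<\mu}$ has infinite cyclic fundamental group generated by a short closed geodesic $\ga_T$ and is a tubular neighbourhood of it; in particular $\partial T$ is an $(n-2)$-sphere bundle over $S^1$, and $T$ contains every point within a definite distance $R_T$ of $\ga_T$, where $\ell(\ga_T)\cosh R_T\asymp\mu$. On $M_i^{\ge\mu}$ the curvature lies in $[-2,-1]$, the injectivity radius is $\ge\mu$, and --- since $\vol_{g_i}(M_i)\le V$ --- there are boundedly many components, each of bounded diameter.

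Now distinguish two cases according to whether the sequence collapses. If, along a subsequence, every geodesic $\ga_T$ occurring above has length bounded below by some $\iota_0>0$, then $\injrad(g_i)$ is bounded below (on a thin component $T$ it is $\ge\tfrac12\ell(\ga_T)\ge\tfrac12\iota_0$, on $M_i^{\ge\mu}$ it is $\ge\mu$), so the Cheeger--Gromov compactness theorem gives a subsequence converging in the $C^{1,\alpha}$ topology to a closed manifold $(M_\infty,g_\infty)$; since $\eps_i\to0$ we get $K_{g_\infty}\equiv-1$, so $M_\infty$ is closed hyperbolic, and $C^{1,\alpha}$-convergence yields $M_i\cong M_\infty$ for $i$ large --- contrary to the choice of the sequence. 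In the opposite case some thin component $T_i$ has $\ell(\ga_{T_i})\to0$, hence $R_{T_i}\to\infty$; since $T_i$ contains the metric ball of radius $\tfrac12 R_{T_i}$ about $\ga_{T_i}$, Günther's volume comparison (using $K_{g_i}\le-1$) gives $\vol_{g_i}(T_i)\gtrsim\ell(\ga_{T_i})\,e^{(n-1)R_{T_i}/2}\asymp\ell(\ga_{T_i})^{(3-n)/2}$, which tends to $\infty$ because $n\ge4$ --- contrary to $\vol_{g_i}(M_i)\le V$. As both cases are impossible, no such sequence exists, and the theorem follows.

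I expect the collapsing case to be the crux: the delicate point is the assertion that a component of the thin part really does extend to the full Margulis radius, so that its volume genuinely blows up. This has a purely topological counterpart: under Cheeger--Gromov convergence of the thick parts, a collapsing thin component would converge to a cusp of a complete, finite-volume hyperbolic manifold, forcing $\partial T_i$ to be diffeomorphic to a compact flat $(n-1)$-manifold; but for $n\ge4$ that is impossible, since $\partial T_i$ is an $(n-2)$-sphere bundle over $S^1$ and hence has $\pi_{n-2}\ne0$, whereas a flat manifold is aspherical. Both versions of the argument degenerate at $n=3$ (there $\partial T_i$ is a flat torus and the exponent $(3-n)/2$ vanishes), in accordance with the fact that hyperbolic Dehn filling yields closed hyperbolic $3$-manifolds of bounded volume with arbitrarily short core geodesics; in that dimension one would instead invoke Thurston's hyperbolic Dehn surgery theorem to reach the same conclusion.
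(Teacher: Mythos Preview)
Your argument is correct and is, in outline, Gromov's original compactness proof; the paper, however, does not prove Theorem~\ref{thm:Gromov} at all---it cites it from \cite{Gromov:78b} and instead proves the equivalent (for $n>4$) Corollary~\ref{cor:pinch} by a route that deliberately avoids Cheeger--Gromov compactness. Both arguments share the injectivity-radius step: the Margulis lemma forces each thin component to be a tube about a short closed geodesic, and for $n\ge4$ the tube volume blows up as the core length $\ell$ shrinks (the paper quotes the Reznikov estimate $\vol\ge C\abs{\log\ell}$; you give a stronger power bound, and either suffices). After that the two approaches part ways: you invoke $C^{1,\alpha}$-precompactness to pass to a hyperbolic limit and conclude $M_i\cong M_\infty$, whereas the paper applies its own Theorem~\ref{thm:vol}---the volume-gap result built on the Besson--Courtois--Gallot natural maps---to produce the diffeomorphisms directly from the Cauchy behaviour of the volumes under the real Schwarz lemma. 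Your approach is more elementary and covers $n=4$ (the paper's route only reaches $n>4$), but the paper's point is precisely that one can replace compactness and finiteness machinery by the explicit analytic construction of $F_s$, and it says so in the remark after the proof of Corollary~\ref{cor:finiteness}. One small quibble: your volume blow-up should be phrased for the tube rather than a metric ball, since a ball centered on $\ga_{T_i}$ wraps around the short geodesic and G\"unther does not apply directly; the honest estimate comes from the Fermi-coordinate volume element together with Rauch comparison along the tube, giving $\vol(T_i)\ge c\,\ell\,\sinh^{n-1}(R_{T_i})$ and the same conclusion.
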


The following corollary of Theorem \ref{thm:vol} is an equivalent statement of
the above theorem in the $n>4$ case, but by an alternate proof which we will
provide in Section \ref{sec:applications}.
\begin{corollary}\label{cor:pinch}
For $(M,g)$ closed of dimension $n>4$ there is a $\eps>0$ depending only on
$\norm{M}$ such that if $M$ has pinched curvatures $-1-\eps\leq K_g\leq -1$,
then $M$ is diffeomorphic to a hyperbolic manifold.
\end{corollary}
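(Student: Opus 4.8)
The plan is to deduce Corollary~\ref{cor:pinch} from Theorem~\ref{thm:vol} by furnishing $M$ with a degree‑one map to a hyperbolic manifold. Since $M$ is negatively curved it is aspherical and $\pi_1(M)$ is word‑hyperbolic, so the essential point is the soft step of showing that, once $\eps$ is small enough in terms of $\norm{M}$, the group $\pi_1(M)$ is isomorphic to a cocompact lattice $\Gamma<\op{Isom}(\H^n)$. That some bound on $M$ is needed here is forced by the Gromov--Thurston examples, and $\norm{M}$ is the right one: by Gromov--Thurston proportionality (the hyperbolic volume of a closed hyperbolic $n$-manifold $N'$ equals $v_n\norm{N'}$, where $v_n$ is the volume of the regular ideal simplex in $\H^n$) together with Wang's finiteness theorem, valid for $n\ge 4$, there are only finitely many closed hyperbolic $n$-manifolds of simplicial volume at most $\norm{M}$; on the other hand the $\pi_1(M)$-action on $\partial_\infty\til M\cong S^{n-1}$ is quasiconformal with distortion tending to $1$ as $\eps\to 0$, and in this finite‑complexity regime that asymptotic conformality forces the action to be conjugate to a Möbius action. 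Then $N:=\H^n/\Gamma$ is a closed hyperbolic manifold with $\pi_1(N)\cong\pi_1(M)$; since $M$ and $N$ are aspherical the induced map $f\colon M\to N$ is a homotopy equivalence, which we take to have degree $+1$.

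Next rescale $g$ by $1+\eps$ so that $-1\le K_g\le-\frac{1}{1+\eps}$, multiplying the volume by $(1+\eps)^{n/2}$, and apply Theorem~\ref{thm:vol} to $(M,(1+\eps)g)$, $f$ and $(N,g_o)$ with $g_o$ of constant curvature $-1$, so that $\rho(g_o)=1$ and $\grad Rm(g_o)\equiv 0$. On the target side $\vol_{g_o}(N)=v_n\norm{N}=v_n\norm{M}$, while on the source side the Gromov--Thurston straightening inequality, applied to the original metric (which still satisfies $K_g\le -1$), gives $\vol_g(M)\le v_n\norm{M}$. Hence
$$\vol_{(1+\eps)g}(M)-\abs{\deg f}\,\vol_{g_o}(N)=(1+\eps)^{n/2}\vol_g(M)-v_n\norm{M}\le\bigl((1+\eps)^{n/2}-1\bigr)v_n\norm{M}.$$
Using the curvature‑derivative form of the constant in Theorem~\ref{thm:vol} noted in Remarks~\ref{rem:theorem1} (which for hyperbolic $N$ removes the $\norm{N}$-dependence, as $\grad Rm(g_o)\equiv0$), the constant $C$ depends only on $n$ and $\injrad(g)$; choosing $\eps$ small in terms of $n$, $\norm{M}$ and this $C$ makes the displayed gap at most $C$. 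Theorem~\ref{thm:vol} then gives that $f$ is homotopic to a degree‑one smooth covering map, i.e.\ a diffeomorphism, so $M$ is diffeomorphic to the hyperbolic manifold $N$.

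The hard part is the first paragraph: pinning $\pi_1(M)$ down as a hyperbolic lattice with a threshold on $\eps$ depending only on $\norm{M}$, and doing so --- as the method of the paper demands --- without any $C^\alpha$ or Gromov--Hausdorff precompactness, i.e.\ by a rigidity argument at the ideal boundary rather than a limiting argument on the manifolds themselves. A secondary technical point is that $C$, hence the admissible $\eps$, formally involves $\injrad(g)$, which $\norm{M}$ does not control since $M$ may carry arbitrarily short closed geodesics. This should be absorbed by treating each short Margulis tube separately: replace $g$ inside such a tube by a fixed negatively pinched model metric whose core geodesic has unit length, which changes the total volume by an amount controlled by $n$ and $\norm{M}$, keeps the curvature in a bounded range, and raises $\injrad$ above a dimensional constant; the estimate above is then reapplied to the modified metric, rescaled if necessary to restore $K\ge -1$.
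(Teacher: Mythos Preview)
The central gap is your first paragraph. You need a closed hyperbolic manifold $N$ with $\pi_1(N)\cong\pi_1(M)$ before Theorem~\ref{thm:vol} can be invoked, and the argument you sketch for this---that the $\pi_1(M)$-action on $\partial_\infty\til M$ is quasiconformal with distortion tending to $1$, and that ``in this finite-complexity regime'' this forces conjugacy to a M\"obius action---is neither a standard citable theorem nor close to a proof. Knowing that $\pi_1(M)$ is a uniform lattice in $\op{Isom}(\H^n)$ is essentially the rigidity content of the corollary (Farrell--Jones topological rigidity would then already give $M$ homeomorphic to $N$); you have relocated the difficulty rather than dissolved it. You yourself flag this step first as ``soft'' and then as ``the hard part,'' which is a symptom of the problem.

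The paper's proof sidesteps this entirely by never producing a hyperbolic target. It argues by contradiction: take a sequence $(M_i,g_i)$ of counterexamples with fixed $\norm{M}$ and pinching $\delta_i\to 0$. The key observation is that Theorem~\ref{thm:vol} only requires the target to be negatively curved, not hyperbolic, so one uses the $(M_j,g_j)$ themselves as targets for the degree-one maps $f_{ij}:M_i\to M_j$ (they are $K(\pi,1)$'s). The real Schwarz lemma of \cite{Besson-Courtois-Gallot:99} applied both ways shows the rescaled volumes form a Cauchy sequence, and the Margulis tube volume estimate $\vol(A^o_{\eps})\ge C\abs{\log l}$ from \cite{Reznikov:95}, together with the uniform volume bound coming from $\norm{M}$, forces a uniform lower bound on $\injrad(g_i)$. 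Hence the constant $C$ of Theorem~\ref{thm:vol} is uniform along the sequence, and once the volumes are within $C$ of each other, $f_{ij}$ is homotopic to a diffeomorphism---contradicting the choice of sequence.

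Your injectivity-radius fix is also shakier than advertised: replacing the metric in each Margulis tube by a model with unit-length core changes the geometry on a set whose diameter is comparable to $\abs{\log\injrad(g)}$, and you have not explained why the resulting volume and curvature perturbations are controlled by $n$ and $\norm{M}$ alone. The paper's route---bounding tube volume below by $\abs{\log l}$ and using the global volume cap---needs no metric surgery at all.
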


This result implies the previous one since since by 1.4 of
\cite{Gromov:78b} there are only a finite number of possible
diffeomorphism types under the assumptions of Theorem
\ref{thm:Gromov}. In fact, \ref{thm:Gromov} implies \ref{cor:pinch}
since, under the assumptions of the Corollary, a theorem of
Thurston's (see Sections 0.3 and 1.2 of \cite{Gromov:82}) implies
$\vol_g(M)$ is bounded by a uniform constant times the simplicial
norm of $M$ which is a homotopy invariant. In particular, the
corollary is known to hold in dimension $4$ as well. These results
are false in dimension $3$, as shown by the examples of homotopy
inequivalent manifolds with bounded volumes and curvatures tending
to $-1$ found in \cite{Gromov:78b}. In fact, these examples can be
chosen to be hyperbolic by the work of Thurston \cite{Thurston:77}.

Both the examples of Gromov and Thurston \cite{Gromov-Thurston:87}
in dimension $n\geq 4$ and the counterexamples of Farrell and Jones
(\cite{Farrell-Jones:94}) and Farrell, Jones and Ontaneda
(\cite{Farrell-Jones-Ontaneda:98b}) in dimension $n>4$ mentioned
earlier show that $\delta$ in the above corollary must depend on
$\pi_1(M)$. For the Gromov-Thurston examples in dimension $n>4$, we
see this dependence explicitly since $\delta<\frac{C}{\log i}$ where
$i$ is the degree of the ramified covers over a fixed manifold which
they use as their examples. The same statement in dimension $n\geq
4$ can be derived directly from Theorem \ref{thm:Gromov} in
conjunction with Wang's finiteness theorem \cite{Wang:72}.

Another principal feature of Theorems \ref{thm:vol} and
\ref{thm:ent-vol} is that they do not rely on a Cheeger-Gromov type
compactness theorem. In fact, even the family of closed Riemannian
manifolds $M$ with fixed $\pi_1(M)$, curvatures and injectivity
radius bounded below, and admitting a map of nonzero degree onto a
fixed negatively curved manifold is not precompact in the
Gromov-Hausdorff topology, since one may metrically connect sum any
such $M$ with a a sufficiently large dilation of an arbitrary simply
connected closed nonnegatively curved manifold and stay within this
family. As such, we will indicate how we can sometimes use Theorem
\ref{thm:vol} to replace Anderson and Cheeger-Gromov type
compactness arguments (e.g.
\cite{Cheeger:69,Cheeger:70a,Anderson-Cheeger:91a,Anderson-Cheeger:92a})
to obtain smooth topological finiteness results. For instance, if we
fix the topology, then we have the following smooth finiteness
theorem of Belegradek (see also Fukaya \cite{Fukaya:84}).

\begin{theorem}[\cite{Belegradek:02}]\label{thm:Belegradek}
For $n\geq 3$ and constants $b\geq a>0$, there are only a finite
number of diffeomorphism types of finite volume manifolds with fixed
$\pi_1$ and $-b^2\leq \ksec \leq -a^2$.
\end{theorem}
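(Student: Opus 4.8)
\noindent\emph{Proof sketch.} My plan is a thick--thin decomposition. If no complete finite-volume $n$-manifold with fundamental group $\Gamma$ and $-b^2\le\ksec\le-a^2$ exists the assertion is vacuous, so I fix one such model $M_0$. A complete finite-volume manifold of pinched negative curvature is aspherical --- its universal cover is a Hadamard manifold, hence diffeomorphic to $\R^n$ --- so every member $M$ of the family is homotopy equivalent to $M_0$, and by the structure theory of ends its cusp cross-sections are quotients by the maximal virtually nilpotent subgroups of $\Gamma$, of which there are finitely many conjugacy classes. Since $\ksec\ge-b^2$, the Margulis lemma supplies $\mu=\mu(n,b)>0$ so that the thin part $M_{\mathrm{thin}}=\{\,x:\injrad_g(x)<\mu\,\}$ splits into \emph{cusp ends}, each diffeomorphic to $[0,\infty)\times B$ with $B$ a closed infranilmanifold of one of these finitely many types, and \emph{Margulis tubes}, each a tubular neighborhood of a short closed geodesic and hence a $D^{n-1}$-bundle over $S^1$; the complement $M_{\mathrm{thick}}$ is a compact manifold with boundary on which $\injrad_g\ge\mu$ and $|\ksec|\le b^2$.

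The step that makes everything finite is a volume bound. From $\Ric_g\ge-(n-1)b^2$ and Gromov's inequality, $\vol_g(M)\le c(n,b)\,\norm{M_0}=:V_0$, a constant depending only on $n$, $b$ and $\Gamma$. Since $\ksec\le-a^2<0$, every embedded ball of radius $\mu/2$ has volume at least $v_0=v_0(n,a,\mu)>0$, and one such ball can be placed abutting the boundary of each component of $M_{\mathrm{thin}}$; a packing argument then bounds the number of thin components by $V_0/v_0$. Attaching a cusp end to $M_{\mathrm{thick}}$ is merely attaching a collar and changes nothing, so topologically only the thick part and the way the Margulis tubes are glued back matter. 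For the thick part I would observe that it is connected with $\injrad_g\ge\mu$, $|\ksec|\le b^2$ and $\vol_g\le V_0$, so covering it by embedded $\mu$-balls bounds its diameter, and after smoothing $\partial M_{\mathrm{thick}}$ has uniformly bounded second fundamental form; the Cheeger--Gromov compactness theorem for manifolds with boundary then leaves only finitely many diffeomorphism types for $M_{\mathrm{thick}}$. (For $n>4$ one could instead try to feed the thick part, via the homotopy equivalence to $M_0$, into Theorem~\ref{thm:vol}; but Theorem~\ref{thm:vol} does not apply to the whole family at once, since the injectivity radius is not bounded below across it --- which is exactly why the thin part has to be excised first.)

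It then remains to reconstruct $M$ by gluing one of finitely many thick parts to finitely many cusp collars (immaterial) and to finitely many Margulis tubes, and \emph{this reassembly is where I expect the real difficulty to lie}. A priori each boundary hypersurface has infinite mapping class group, so infinitely many gluings are conceivable; the point to establish is that only finitely many of them produce a manifold whose fundamental group is $\Gamma$ with its prescribed peripheral structure --- each tube core must represent a specific conjugacy class in $\Gamma$, and reinserting the tube must kill precisely the corresponding meridian, so the rigidity of $\Gamma$ ought to cut the admissible gluings down to a finite set. Proving this finiteness of admissible tube fillings, uniformly over the whole family, is the crux; the remaining steps are routine bounded-geometry bookkeeping.
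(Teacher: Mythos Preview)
Your approach differs fundamentally from the paper's, and the paper only treats the \emph{closed} case with $n>4$. Your route---thick--thin decomposition, Cheeger--Gromov compactness for the thick part, then a filling analysis for the tubes---is essentially the classical strategy (as in \cite{Belegradek:02} or \cite{Fukaya:84}). The paper, by contrast, explicitly sets out to avoid any Gromov--Hausdorff or Cheeger compactness argument; see the remark following the proof of Corollary~\ref{cor:finiteness}.

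The paper's proof is much shorter because of an observation you dismiss: for closed manifolds in the family the injectivity radius \emph{is} uniformly bounded below. First, the real Schwarz lemma of \cite{Besson-Courtois-Gallot:99} (exactly as in the proof of Corollary~\ref{cor:pinch}) bounds all the volumes by a constant depending only on $\Gamma$ and the pinching. But a Margulis tube about a closed geodesic of length $\ell$ has volume at least $C|\log\ell|$ (cf.\ \cite{Reznikov:95}), so a uniform volume ceiling forces a uniform floor on $\ell$, hence on the injectivity radius. With both volume and injectivity radius controlled, Theorem~\ref{thm:vol} (equivalently Theorem~\ref{thm:ent-vol}) applies directly to any pair of members of the family: two manifolds whose volumes differ by less than the constant $C$ are diffeomorphic, and since all the volumes lie in a bounded interval, finitely many such $C$-windows cover it.

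So your parenthetical claim that ``Theorem~\ref{thm:vol} does not apply to the whole family at once, since the injectivity radius is not bounded below across it'' is exactly the statement the paper refutes in the closed case, and refuting it is what lets the paper bypass both the thick--thin decomposition and the tube-filling problem you flag at the end. In the genuinely cusped finite-volume setting your objection is of course valid and the paper makes no claim there; your outline is then the right shape, and the gluing difficulty you isolate is real and is where the work in \cite{Belegradek:02} lies.
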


In Section \ref{sec:applications} we will show that this theorem, in
the case of closed manifolds of dimension $n>4$, follows from
Theorem \ref{thm:ent-vol}. We will also prove there a generalization
of this to the following finiteness theorem which arises as a
corollary of Theorem \ref{thm:vol}.

\begin{corollary}\label{cor:finiteness}
Fix any topological manifold $N$ of dimension $n>4$ admitting a negatively
curved metric, and let
$$V(\delta)=\sup\set{u(g_o)^n\vol_{g_o}(N)+C(n,\delta,\rho)\,:\,
\rho\geq 1 \text{ and } \ (N,g_o)\in\mc{N}_{n,\rho}},$$ where
$C(n,\delta,\rho)$ is the constant from Theorem \ref{thm:ent-vol} for
$\abs{\deg(f)}=1$. If $\mc{M}$ represents the class of all Riemannian
manifolds $(M,g)$ admitting a degree one map to $N$ and satisfying the
entropy-volume bound $h(g)^n\vol_{g}(M)<V(\delta)$, then $\mc{M}$ has
only a finite number of diffeomorphism types.
\end{corollary}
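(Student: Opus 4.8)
The plan is to show that every $(M,g)\in\mc M$ is diffeomorphic to the fixed topological manifold $N$ equipped with one of its (finitely many) smooth structures, and to obtain the latter by feeding $(M,g)$, together with a near‑optimal comparison metric on $N$, into Theorem \ref{thm:ent-vol}. Fix $(M,g)\in\mc M$ and a degree one map $f\colon M\to N$. Because $\abs{\deg(f)}=1$ we have $\abs{\deg(f)}\norm{N}=\norm{N}$, a fixed homotopy invariant of $N$, so that term drops out of the constant; moreover $f_\ast$ surjects $\pi_1(M)$ onto $\pi_1(N)$, which---being the fundamental group of a closed negatively curved manifold of dimension $>4$---is a non‑elementary word hyperbolic group, hence contains a free group of rank two. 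Finally, Bishop--Gromov gives $h(g)\le(n-1)\kappa(g)$.

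The crux, and the step I expect to be the main obstacle, is to pin down the dependence of the constant in Theorem \ref{thm:ent-vol} on the remaining scale‑invariant datum $h(g)/\kappa(g)$, so that on the class $\mc M$ that constant genuinely reduces to a positive quantity $C(n,\delta,\rho)$ depending only on $n$, the normalized injectivity‑radius (equivalently small‑ball‑volume) parameter $\delta$, and the pinching $\rho$. I would do this via a uniform lower bound $h(g)\ge c_0(n,\delta,N)\,\kappa(g)>0$ on $\mc M$. Rescaling so that $\kappa(g)=1$: the injectivity‑radius hypothesis makes the deck orbit $\pi_1(M)\cdot\til p\subset\til M$ uniformly separated, with each of the resulting disjoint metric balls of volume $\ge v_0(n,\delta)>0$ (by Croke's inequality, which needs no upper curvature bound), while the surjection $\pi_1(M)\twoheadrightarrow\pi_1(N)$ onto a non‑elementary hyperbolic group forces the orbit‑counting function, and hence the volume growth entropy $h(g)$, to grow at a rate bounded below in terms of $\delta$ and the fixed geometry of $N$. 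Together with $h(g)/\kappa(g)\le n-1$ this confines $h(g)/\kappa(g)$ to a compact subinterval of $(0,\infty)$ on which Theorem \ref{thm:ent-vol}'s constant is bounded below by the desired $C(n,\delta,\rho)$. (Should the orbit‑growth estimate prove delicate, one can instead avoid any infimum and run the rest of the proof one value of $h(g)/\kappa(g)$ at a time, using the exact constant Theorem \ref{thm:ent-vol} provides for that value.)

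With the constant in hand the conclusion is quick. Given $(M,g)\in\mc M$, the strict inequality $h(g)^n\vol_g(M)<V(\delta)$ and the definition of $V(\delta)$ as a supremum produce a negatively curved metric $g_o$ on $N$---with underlying smooth structure $\sigma$, say---such that $u(g_o)^n\vol_{g_o}(N)+C(n,\delta,\rho(g_o))>h(g)^n\vol_g(M)$. All hypotheses of Theorem \ref{thm:ent-vol} now hold for $f\colon(M,g)\to(N,g_o)$: the dimension is $n>4$, and its constant is $\ge C(n,\delta,\rho(g_o))$, so the entropy‑volume inequality is satisfied; therefore $f$ is homotopic to a degree one smooth covering map, i.e. to a diffeomorphism $M\to(N,\sigma)$. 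Thus every member of $\mc M$ is diffeomorphic to the fixed topological manifold $N$ with one of its smooth structures; since $N$ is compact of dimension $n\ge5$, there are only finitely many such---the set of smoothings of $N$ (and a fortiori the set of diffeomorphism types of smooth manifolds homeomorphic to $N$) is finite by smoothing theory, as $\mathrm{TOP}/\mathrm O$ has finite homotopy groups. Hence $\mc M$ has only finitely many diffeomorphism types.
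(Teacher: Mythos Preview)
Your argument follows the same skeleton as the paper's: for each $(M,g)\in\mc M$, use the strict inequality $h(g)^n\vol_g(M)<V(\delta)$ together with the definition of $V(\delta)$ as a supremum to locate a negatively curved $(N,g_o)$ for which the hypothesis of Theorem~\ref{thm:ent-vol} is met, conclude that $M$ is diffeomorphic to $N$ equipped with some smooth structure, and then finish by noting that only finitely many such smooth structures occur.

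Two points of comparison are worth recording. First, for the finiteness step the paper does not appeal to smoothing theory and the finiteness of $\pi_*(\mathrm{TOP}/\mathrm O)$ as you do; instead it observes that the comparison class $\mc N$ consists of closed negatively curved manifolds with a fixed fundamental group, so Belegradek's theorem (Theorem~\ref{thm:Belegradek})---which the paper has just rederived from Theorem~\ref{thm:ent-vol}---already gives finitely many diffeomorphism types there. Either route works; yours imports an outside tool, while the paper stays internal to its own machinery. Second, the paper's proof does not engage at all with the $h(g)/\kappa(g)$ dependency you flag: the statement of the corollary simply writes the constant as $C(n,\delta,\rho)$ and the proof invokes the theorem directly. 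Your instinct that this deserves attention is sound---the remark after Theorem~\ref{thm:ent-vol} explicitly says $C\to 0$ as $h(g)/\kappa(g)\to 0$---and your proposed orbit-growth lower bound is the natural approach, though the details of making that uniform over $\mc M$ are, as you anticipate, somewhat delicate. In this respect your write-up is more scrupulous than the paper's own four-sentence proof.
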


Now we state our most general theorem. For a $C^1$ map $F:M\to N$ between
two manifolds let $T_r(F)\subset M$ be the $r$-tubular neighborhood of the
critical points of $F$, i.e.
$$T_r(F)=\bigcup_{\set{x\,|\,\op{Jac}_x(F)=0}}B(x,r).$$
Theorems \ref{thm:vol} and \ref{thm:ent-vol} are special cases of
the following more general theorem.
\begin{theorem}\label{thm:critical}
Let $f:M\to N$ be a continuous map of nonzero degree between any closed
Riemannian manifold $(M,g)$ and a closed negatively curved manifold
$(N,g_o)$ of dimension $n>4$. There exists a $C^1$ map $F:M\to N$ homotopic
to $f$ and a number $r$ depending only on $n$, $\rho,$
$\abs{\deg(f)}\norm{N}$ and $\kappa(g)\cdot\injrad(\til{g})$ such that
$$h(g)^n\vol_{g}(M)\geq u(g_o)^n \abs{\deg(f)}\vol_{g_o}(N) + \frac12 h(g)^n
\vol_{g}\(T_{\frac{r}{\kappa(g)}}(F)\).$$
\end{theorem}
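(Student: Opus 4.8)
The plan is to construct the map $F$ via the barycenter method of Besson--Courtois--Gallot, carefully tracking the critical-point locus. First I would lift $f$ to the universal covers and use the Patterson--Sullivan measures associated to the action of $\pi_1(N)$ on $\til N$: for each $x\in\til M$ push forward the Patterson--Sullivan measure on $\partial\til M$ by the boundary map induced by $f$, then define $\F(x)\in\til N$ as the barycenter (the unique minimizer of the corresponding Busemann-integral functional). This descends to a $\pi_1$-equivariant map $\F:\til M\to\til N$, hence to $F:M\to N$ homotopic to $f$, and is $C^1$ because the implicit function theorem applies to the strictly convex functional. The key pointwise estimate, exactly as in \cite{Besson-Courtois-Gallot:95} and \cite{Besson-Courtois-Gallot:99}, bounds $\abs{\Jac_x F}$ in terms of the operator $h(g)^{-n}u(g_o)^n$ times a ratio of determinants of the two symmetric forms appearing in the Euler--Lagrange and implicit-function equations; taking the infimum that defines $u(g_o)$ absorbs the target-side geometry, yielding
$$
h(g)^n\abs{\Jac_x F}\le u(g_o)^n \qquad\text{for all }x\in M,
$$
with equality forcing rigidity. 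Integrating and using that $\int_M \Jac_x F\,\dvol_g = \deg(f)\vol_{g_o}(N)$ then gives the clean inequality $h(g)^n\vol_g(M)\ge \abs{\deg(f)}u(g_o)^n\vol_{g_o}(N)$.

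To get the extra tube term, the point is that the Jacobian bound is not merely $\le u(g_o)^n/h(g)^n$ but is \emph{strictly} smaller wherever the push-forward measure is ``spread out'' — and, conversely, the Jacobian cannot be too close to its maximum on a definite-size ball around a point where it vanishes. So the main quantitative input I would establish is: there exist $\theta\in(0,1)$ and $r>0$, depending only on $n$, $\rho$, $\abs{\deg(f)}\norm N$ and $\kappa(g)\injrad(\til g)$, such that if $\Jac_{x_0}F=0$ then $h(g)^n\abs{\Jac_x F}\le (1-\theta)u(g_o)^n$ for all $x\in B(x_0, r/\kappa(g))$. Granting this, on $T_{r/\kappa(g)}(F)$ we gain a factor $(1-\theta)$, so
$$
\abs{\deg(f)}\vol_{g_o}(N) = \int_M \Jac_x F\,\dvol_g \le \frac{u(g_o)^n}{h(g)^n}\Big(\vol_g(M) - \theta\,\vol_g\big(T_{r/\kappa(g)}(F)\big)\Big),
$$
and after rescaling $r$ (replacing $\theta$ by absorbing it, i.e.\ choosing the radius small enough that the constant becomes $\tfrac12$, or simply renaming constants) this rearranges to the asserted inequality with the $\tfrac12$. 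The scale-invariance of all quantities under $g\mapsto c g$, $g_o\mapsto c' g_o$ is what forces the radius to appear in the normalized form $r/\kappa(g)$ with $r$ dimensionless.

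The hard part is the quantitative Jacobian decay near critical points, i.e.\ establishing $\theta$ and $r$ with the claimed dependencies. Two mechanisms must be combined. First, a \emph{gradient estimate}: one needs a uniform bound $\abs{\nabla(\Jac F)}\le L\,\kappa(g)$, which comes from differentiating the implicit-function identity for $\F$ and using the curvature bounds on both $\til M$ (lower bound $\kappa(g)$, via comparison) and $\til N$ (pinching $\rho$), together with the lower injectivity-radius bound $\kappa(g)\injrad(\til g)$ to control the size of the region over which $\F$ is uniformly $C^1$; the degree-times-simplicial-volume term enters through the BCG estimate on the total mass and ``atomic part'' of the push-forward measures. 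Given such an $L$, the Jacobian, being continuous with controlled gradient, stays below $(1-\theta)u(g_o)^n/h(g)^n$ on a ball of radius $\sim \theta/(L\kappa(g))$ around any zero — \emph{provided} we also know $\Jac F < u(g_o)^n/h(g)^n$ with a definite gap somewhere, which is the second mechanism: the strict BCG inequality degenerates only in the equality case, and a compactness/normal-forms argument (or the explicit second-variation computation in \cite{Besson-Courtois-Gallot:99}) upgrades this to a uniform gap $\theta$ depending on how far $(M,g)$, $(N,g_o)$ are from the rigid configuration, which is exactly encoded by $h(g)/\kappa(g)$, $\rho$ and $\kappa(g)\injrad(\til g)$. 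I expect the bookkeeping to show that once $h(g)^n\vol_g(M)$ is within $C$ of the BCG lower bound, $\Jac F$ is forced to be close to maximal \emph{off} the tube, which self-consistently pins down $\theta$ and $r$; this circularity is resolved exactly as in the derivation of Theorems~\ref{thm:vol} and \ref{thm:ent-vol} from Theorem~\ref{thm:critical}.
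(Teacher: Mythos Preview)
Your overall architecture—build a BCG barycenter map, prove a gradient bound for its Jacobian, and use that bound to show $\abs{\Jac F}\le\tfrac12(s/u(g_o))^n$ on an $r$-tube about the critical set, then integrate and let $s\to h(g)$—is exactly the paper's strategy. But three of the load-bearing steps in your write-up are either wrong or missing.

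First, your construction of $F$ does not work for the stated hypotheses. You propose to push forward Patterson--Sullivan measures on $\partial\til M$ via a boundary map; but $(M,g)$ is an \emph{arbitrary} closed Riemannian manifold, so $\til M$ need have no boundary at infinity and no Patterson--Sullivan theory. The paper instead uses, for each $s>h(g)$, the finite measure $d\mu_x^s=e^{-s\,d(x,\cdot)}\dvol_g$ on $\til M$, pushes it forward by $\til f$ to $\til N$, convolves with the visual measures on $\partial\til N$, and takes the barycenter there. This yields a family $F_s$ for $s>h(g)$; the limit $s\to h(g)$ is taken only after the integral inequality is established.

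Second, your ``second mechanism'' is a red herring. Once you have a uniform bound $\norm{\nabla\Jac F_s}\le K$ and a critical point $x_0$, the mean-value inequality alone gives $\abs{\Jac F_s}\le K r$ on $B(x_0,r)$; choosing $r=\tfrac{1}{2K}(s/u(g_o))^n$ already yields the factor $\tfrac12$ on $T_r(F_s)$ with no compactness, no second-variation analysis, and no circularity. The paper does exactly this.

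Third, and most substantively, you have mis-identified how the dependence on $\abs{\deg(f)}\norm{N}$ enters. The gradient estimate for $\Jac F_s$ (the paper's Theorem~\ref{thm:gradest}) unavoidably involves $\beta=\max\{\rho^3,\norm{\nabla\op{Rm}(g_o)}\}$, because differentiating the implicit equation for the barycenter brings in $\nabla DdB_\theta$, which is controlled only by first derivatives of the curvature of $N$. To make $r$ depend on $\rho$ alone (and not on $\norm{\nabla\op{Rm}(g_o)}$), the paper first replaces $g_o$ by a Ricci-flow smoothing $g_\eps$ with $\norm{\nabla\op{Rm}(g_\eps)}$ bounded in terms of $n,\rho,\eps$; the cost is a perturbation of $u(g_o)^n\vol_{g_o}(N)$ of size $c(n,\rho,\eps)\abs{\deg(f)}\vol_{g_o}(N)$, and it is \emph{this} term that is bounded via Gromov's proportionality of volume and simplicial volume, producing the $\abs{\deg(f)}\norm{N}$ dependency. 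Your explanation (``total mass and atomic part of the push-forward measures'') is not where it comes from.
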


Theorem \ref{thm:critical} implies  that adding ``smooth topology'' to $M$
uniformly increases its volume. For instance, starting with $M=N$ and adding
$k$ $i$-handles, for any $i=1,\dots,n-1$, with bounded normalized injectivity
radius to $M$ increases the entropy-volume of $M$ by at least $kC$ for a fixed
constant $C$. This follows from the injectivity radius bound, since for the
resulting degree $1$ map $F$ there must be at least $k$ critical points, one on
each handle, separated by a distance of at least the injectivity radius. This
bound could be made sharper by taking better account of the entire critical
locus for handles. It is generally easy to detect the topological change resulting
from adding handles. However, there are many more subtle ways of changing
smooth topology. A less intuitive example would be to keep the topology fixed,
and allow changes to the smooth structure. If $(M_1,g_1)$ and $(M_2,g_2)$ are
two homeomorphic, but nondiffeomorphic, negatively curved manifolds, then
we cannot have $u(g_i)=h(g_i)$ for both $i=1,2$. Otherwise either the above
inequality holds, or we could reverse the roles of $M_1$ and $M_2$ so that it
holds. There are some other general situations where we automatically have a
degree one map. The following corollary gives one such example.

\begin{corollary}\label{cor:connect}
  Let $(N,g_o)$ be closed with $-\rho^2\leq K_{g_o}\leq -1$. For any smooth manifold $Q$
  and metric $g$ on $N\# Q$ rescaled so that
 $K_g\geq -1$, there is a constant $C=C(n,\injrad(g),\rho)$ such that if
  $$\vol_{g}(N\#Q)\leq \vol_{g_o}(N)+C,$$ then $N\#Q$ is
  diffeomorphic to $N$.
\end{corollary}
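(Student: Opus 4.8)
The plan is to deduce Corollary \ref{cor:connect} directly from Theorem \ref{thm:critical} (equivalently Theorem \ref{thm:vol}) applied to the obvious collapsing map. First I would fix the degree-one map $f\colon N\#Q\to N$ obtained by collapsing the $Q$-summand: choose a small embedded ball $B\subset N$ in which the connected sum is performed, and let $f$ send everything outside the connecting region identically to $N\setminus B$ and crush the handle-and-$Q$ portion into $B$; this has $\deg(f)=1$. Since $f$ has nonzero degree and $N$ is negatively curved, $\kappa(g)>0$ as noted in the excerpt, so the normalization $K_g\ge -1$ forces $\kappa(g)=1$ and all hypotheses of Theorem \ref{thm:vol} are in force with $\abs{\deg(f)}=1$.

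Next I would invoke Theorem \ref{thm:vol} with this $f$. Its conclusion is that there is a constant $C>0$, depending only on $n$, $\injrad(g)$, $\rho$ and $\abs{\deg(f)}\,\norm{N}=\norm{N}$, such that the hypothesis $\vol_g(N\#Q)\le \vol_{g_o}(N)+C$ implies $f$ is homotopic to a smooth covering map of degree $1$, i.e.\ a diffeomorphism; in particular $N\#Q$ is diffeomorphic to $N$. The only cosmetic point is that the constant in Theorem \ref{thm:vol} is allowed to depend on $\norm{N}$, whereas the statement of the corollary lists only $n$, $\injrad(g)$ and $\rho$; but $\norm{N}$ is a topological invariant of the fixed manifold $N$ (it depends only on $\pi_1(N)$, as recalled in Remarks \ref{rem:theorem1}), so it may be absorbed into the dependence already declared. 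Writing $C=C(n,\injrad(g),\rho)$ with $N$ fixed is therefore legitimate, and using the first item of Remarks \ref{rem:theorem1} one also knows $C<\vol_g(N\#Q)$ automatically, so the hypothesis is not vacuous.

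The one genuine (if minor) thing to check is that the hypothesis $\vol_g(N\#Q)\le\vol_{g_o}(N)+C$ of the corollary really does feed into Theorem \ref{thm:vol}: there the right-hand side is $\abs{\deg(f)}\vol_{g_o}(N)+C=\vol_{g_o}(N)+C$ since $\deg(f)=1$, so the two inequalities coincide verbatim. I expect the main (and essentially only) obstacle to be bookkeeping rather than mathematics: one must be careful that the constant $C$ from Theorem \ref{thm:vol} is the \emph{same} constant across all connect-summands $Q$, which holds precisely because $\norm{N}$ and $\rho$ and $n$ are fixed once $N$ and $g_o$ are fixed, and $\injrad(g)$ is an explicit input. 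No compactness or surgery input is needed, consistent with the philosophy stressed in the abstract. Finally, if one prefers a self-contained derivation from Theorem \ref{thm:critical} instead, the same argument applies: the extra nonnegative term $\tfrac12 h(g)^n\vol_g(T_{r/\kappa(g)}(F))$ on the right only strengthens the inequality, so the volume gap hypothesis forces this tubular term to be small, and after unwinding the relation between $h(g)$, $u(g_o)$ and the curvature normalizations (using $h(g_o)=u(g_o)$ would not be available here, but the inequalities $u(g_o)\ge n-1$ and $h(g)\ge$ a constant depending on $\injrad(g)$ suffice) one recovers that $F$ can be taken with no critical points, hence a covering, hence a diffeomorphism.
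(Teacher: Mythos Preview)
Your proof is correct and follows the same approach as the paper: construct the degree-one collapsing map $f\colon N\#Q\to N$ and apply Theorem \ref{thm:vol}, absorbing the $\norm{N}$ dependence since $N$ is fixed (the paper phrases this as a special case of a slightly more general adjunction-space corollary in Section \ref{sec:applications}, but the mechanism is identical). Your optional final paragraph deriving it instead from Theorem \ref{thm:critical} is unnecessary and slightly muddled---one needs the upper bound $h(g)\le n-1$ coming from $K_g\ge -1$, not a lower bound on $h(g)$---but this does not affect your main argument.
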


In Section \ref{sec:prelim} we recall the construction of the
generalized natural maps $F_s$ due to Besson, Courtois and Gallot. 
There we also reduce the proof of Theorems
\ref{thm:vol},\ref{thm:ent-vol} and \ref{thm:critical} to a key
estimate. In Section \ref{sec:Jac} we derive the main components of
our main estimate, and in Section \ref{sec:synthesis} we put these
together. Finally, in Section \ref{sec:applications} we prove the
remaining corollaries and some additional related results.
\medskip

\section*{Acknowledgements}

The author would like to thank Benson Farb for helpful comments.

\section{Preliminaries}\label{sec:prelim}

Let $(M,g)$ and $(N,g_o)$ be closed, orientable manifolds and let
$f:M\to N$ be a degree $d$ map. Since the quantities in the
inequality of Theorem \ref{thm:critical} are scale invariant in both
$g$ and $g_o$, we will from now on, unless otherwise stated, assume
that we have scaled the metrics so that the sectional curvatures of
${N,g_o}$ are bounded from above by $-1$ and that those of $(M,g)$
from below by $-1$. This normailization removes two extra parameters
that we would otherwise have to drag around.

We begin by recalling the construction of the natural maps
$$F_s:M\to {N}$$
due to Besson, Courtois and Gallot in its present form. Let $\til f:
\til M\to\til {N}$ denote the lift of $f$ to the universal covers.
For each $s>0$ and $x\in \til M$ consider the measure $\mu_x^s$ on
$\til M$ in the Lebesgue class with density
$$\frac{d\mu_x^s}{\dvol_g}(z)=e^{-sd(x,z)}$$
where $d$ is the distance function of $\til M$. Recall the
definition of the volume growth entropy $h(g)$. For all $s>h(g)$ and
all $x\in\til M$ the total measure $\Vert\mu_x^s\Vert$ of $\mu_x^s$
is finite.

Consider the push-forward measure $\til f_*\mu_x^s$ on $\til {N}$,
and define a measure $\sigma_x^s$ on $\D\til {N}$ in the following
way. For $z \in \til{{N}}$, let $\nu_z$ be the ``visual'' or
Patterson-Sullivan measures normalized to be probability measures on
$\D\til {N}$ (see \cite{Besson-Courtois-Gallot:95}), and for
$U\subset\D\til {N}$ measurable define
\begin{equation}\label{convolution}
\sigma_x^s(U)=\int_{\til {N}}\nu_z(U)d(\til f_*\mu_x^s)(z).
\end{equation}
That is, we take $\sigma_x^s$ to be convolution of the push-forward
measure $\til f_*\mu_x^s$ with the visual measures $\nu_z$. Notice
that for all $s,x$, $\Vert\mu_x^s\Vert=\Vert\sigma_x^s\Vert$, so the
measure $\sigma_x^s$ is finite for $s>h(g)$.

For $\theta\in\D\til {N}$ denote by $B_\theta(y)$ the Busemann
function of ${N}$ (normalized so that $B_\theta(O) = 0$ for some
fixed origin $O \in \til{{N}}$) and consider the function on $\til
{N}$ defined by
\begin{equation}\label{function}
\mathcal{B}_{\sigma_x^s}(y)=\int_{\D\til
{N}}B_\theta(y)d\sigma_x^s(\theta).
\end{equation}
This is a proper strictly convex function, hence it has a unique
minimum \cite{Besson-Courtois-Gallot:95}, which we call the
\textit{barycenter} of the measure $\sigma_x^s$ and denote by
$\op{Bar}(\sigma_x^s)$.

This construction is much more general: Given any finite measure
$\lambda$ on $\til {N}$ we can define as in \eqref{convolution} a
measure $\sigma_\lambda$ as the convolution of $\lambda$ with the
family of visual measures. For example the convolution of the
Dirac-measure $\delta_z$ with support $z\in\til {N}$ is the visual
measure $\nu_z$. Similarly, we can define for every finite measure
$m$ of $\D\til{{N}}$ the function $\CB_m$ as in \eqref{function}.
The function $\CB_m$ is proper and convex if $m$ has no atoms. If
this is the case, we define $\op{Bar}(m)$, the barycenter of $m$, to
be the unique minimum of $\CB_m$.

For all $s>h(g)$, the map $\til{F}_s:\til{M}\to\til{{N}}$ defined by
$x\mapsto \op{Bar}(\sigma_x^s)$ is equivariant under the action of
$\pi_1(M)$ and $\pi_1({N})$ and so descends to the \emph{natural
map} $F_s:M \to {N}.$ The following is a collection and restatement
of some of the important properties of the natural map due to
Besson, Courtois and Gallot
\cite{Besson-Courtois-Gallot:95,Besson-Courtois-Gallot:98}. In the
statements found there, the authors used $h(g_o)$ instead of
$u(g_o)$ in the case the target is a locally rank one symmetric
space or else $n-1$ for the case when the target is negatively
curved with upper curvature bound $-1$. However, their method of
proof was to show the following more general version, and then show
separately that $h(g_o)= u(g_o)$ when $g_o$ is locally symmetric and
that $n-1\geq u(g_o)$ when $ K_{g_o} \leq -1$.

\begin{theorem}\label{bcg}
Let $(M,g)$ and $({N},g_o)$ be closed orientable manifolds, let
$f:M\to {N}$ be a map of nonzero degree and assume that the
sectional curvature of ${N}$ is pinched and bounded from above by
$-1$. For all $s>h(g)$ and all $x\in M$,
\begin{enumerate}
\item The natural maps $F_s$ are at least $C^1$.
\item The map
    $\til\Psi_s: [0,1]\times\til M\to\til {N}$ defined by
        $\til\Psi_s(t,x)=\op{Bar}\(t\nu_{\til f(x)}+
        (1-t)\sigma_x^s\)$
    is equivariant and induces a continuous homotopy between $f$
    and $F_s$.
\item   $\vert \op{Jac}(F_s)(x)\vert\le \left(\frac s{u(g_o)}\right)^n$.
\end{enumerate}
\end{theorem}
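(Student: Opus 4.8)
The plan is to recall the Besson--Courtois--Gallot argument \cite{Besson-Courtois-Gallot:95,Besson-Courtois-Gallot:98} in implicit-function form. Write $y=\til F_s(x)=\op{Bar}(\sigma_x^s)$, so that $y$ is the unique zero of $\grad\CB_{\sigma_x^s}$, equivalently the unique solution of
\begin{equation*}
G(x,y):=\int_{\D\til N}dB_\theta\big|_y\,d\sigma_x^s(\theta)=0\in T_y^\ast\til N .
\end{equation*}
The first thing I would check is equivariance, which also supplies the descent in (1) and (2): for $\ga\in\pi_1(M)$ and the corresponding isometry of $\til N$ one has $d(\ga x,\ga z)=d(x,z)$, push-forward along the equivariant lift $\til f$ is natural, and the normalized Patterson--Sullivan measures satisfy $\ga_\ast\nu_z=\nu_{\ga z}$; hence $\ga_\ast\bigl(t\nu_{\til f(x)}+(1-t)\sigma_x^s\bigr)=t\nu_{\til f(\ga x)}+(1-t)\sigma_{\ga x}^s$, and since barycentres are natural under isometries, $\til F_s$ and each $\til\Psi_s(t,\cdot)$ are equivariant and descend to $F_s$ and $\Psi_s(t,\cdot)$ on $M$.

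For the regularity statement (1), I would verify that $(x,y)\mapsto G(x,y)$ is $C^1$ with invertible $y$-derivative and then invoke the implicit function theorem. The $y$-derivative at the barycentre is $\op{Hess}_y\CB_{\sigma_x^s}=\int_{\D\til N}\op{Hess}_y(B_\theta)\,d\sigma_x^s(\theta)$, which is continuous (horospheres in a pinched Hadamard manifold are $C^2$ hypersurfaces, so each $B_\theta\in C^2$) and positive definite, since $\sigma_x^s$ is a non-atomic measure of full support on $\D\til N$ while $\op{Hess}(B_\theta)\ge0$ has one-dimensional kernel $\R\grad B_\theta$. The $x$-dependence is $C^1$ because $z\mapsto e^{-sd(x,z)}$ is differentiable in $x$ off the measure-zero cut locus of $x$, with derivative $-s\,e^{-sd(x,z)}\grad_xd(x,z)$ bounded in $z$, so one may differentiate under the integral defining $\sigma_x^s$ (using $s>h(g)$ for integrability). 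Hence $\til F_s\in C^1$, and so is $F_s$.

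For the homotopy (2), set $\sigma_x^{s,t}=t\nu_{\til f(x)}+(1-t)\sigma_x^s$. For every $t\in[0,1]$ this measure is non-atomic and of full support, so $\CB_{\sigma_x^{s,t}}$ is proper and convex with a unique minimum $\til\Psi_s(t,x)$; at $t=0$ this is $\til F_s(x)$, and at $t=1$ one uses the normalization of the Patterson--Sullivan measures, $\op{Bar}(\nu_z)=z$ \cite{Besson-Courtois-Gallot:95}, to get $\til\Psi_s(1,x)=\til f(x)$. Joint continuity on $[0,1]\times\til M$ follows from the implicit function argument applied uniformly on compacta (the relevant Hessians staying uniformly positive definite), and together with the equivariance above this gives the continuous homotopy between $F_s$ and $f$.

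The Jacobian bound (3) is the heart of the matter and the step I expect to cost the most work. I would differentiate $G(x,\til F_s(x))\equiv0$ in a unit direction $u\in T_x\til M$ to obtain, for all $w\in T_y\til N$ with $y=\til F_s(x)$,
\begin{equation*}
\int_{\D\til N}\op{Hess}_y(B_\theta)\bigl(D\til F_s(u),w\bigr)\,d\sigma_x^s(\theta)=s\int_{\til M}\bigl\langle\grad_xd(x,z),u\bigr\rangle\int_{\D\til N}dB_\theta|_y(w)\,d\nu_{\til f(z)}(\theta)\,d\mu_x^s(z).
\end{equation*}
Writing $K_x=\int\op{Hess}_y(B_\theta)\,d\sigma_x^s$ and $H_x=\int dB_\theta|_y\tensor dB_\theta|_y\,d\sigma_x^s$ (so $\tr H_x=\|\sigma_x^s\|$ since $|\grad B_\theta|\equiv1$), this says $D\til F_s(u)=sK_x^{-1}(a_u)$ with $a_u\in T_y^\ast\til N$. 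Applying Cauchy--Schwarz and Jensen to $a_u$ bounds $a_u(w)^2$ by $H_x(w,w)$ times a weight $c_u=\int|\langle\grad_xd(x,z),u\rangle|^2\,d\mu_x^s(z)$ with $\sum_i c_{u_i}=\|\sigma_x^s\|$ over an orthonormal frame; forming the determinant and using the arithmetic--geometric mean inequality then gives, after dividing $H_x,K_x$ by $\|\sigma_x^s\|$ so that $\tr H_x=1$,
\begin{equation*}
\abs{\op{Jac}(\til F_s)(x)}\le\left(\frac{s}{\sqrt n}\right)^{n}\frac{\sqrt{\det H_x}}{\det K_x}.
\end{equation*}
Finally the curvature hypothesis $K_{g_o}\le-1$ gives the pointwise form comparison $\op{Hess}(B_\theta)\ge g_o-dB_\theta\tensor dB_\theta$, so $K_x\ge g_o-H_x$; identifying $H_x$ with $\la(v\tensor v)$ and $K_x$ with $\la(\op{Hess}_yB_v)$ for the probability measure $\la\in\mc P(S_y\til N)$ corresponding to $\sigma_x^s$ under $\theta\leftrightarrow-\grad B_\theta|_y$, the very definition of $u(g_o)$ yields $u(g_o)^n\le n^{n/2}\det K_x/\sqrt{\det H_x}$, whence $\abs{\op{Jac}(\til F_s)(x)}\le(s/u(g_o))^n$. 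The main obstacles I anticipate are: justifying the differentiation under the integral when $d(x,\cdot)$ is merely Lipschitz on $\til M$; carrying out the Cauchy--Schwarz so as to retain the whole form $H_x$ (crudely using only $|\grad B_\theta|\equiv1$ would leave $\det K_x$ alone in the denominator, which is unbounded below and so useless); and recognizing that the extremal problem thereby produced --- maximize $\sqrt{\det H}/\det K$ subject to $\tr H=1$ and $K\ge g_o-H$ --- is exactly the integral-geometric quantity defining $u(g_o)$.
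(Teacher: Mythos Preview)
The paper does not give its own proof of this theorem; it is quoted as a result of Besson, Courtois and Gallot, and the paragraph preceding the statement explains that their proofs already yield the version with $u(g_o)$ in place of $h(g_o)$ or $n-1$. Your plan is precisely their argument --- implicit-function regularity for (1), barycentre interpolation for (2), and for (3) the Cauchy--Schwarz/AM--GM chain producing $(s/\sqrt n)^n\sqrt{\det H_x}/\det K_x$ followed by the identification of that ratio with the defining infimum for $u(g_o)$ --- and it is correct.
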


\begin{remark}
  The appropriate version of the above theorem also holds when $M$
  or $N$ are not orientable, assuming that $f$ induces an
  orientation true homomorphism between the fundamental groups.
\end{remark}

The above theorem shows that the maps $F_s$ have a calibration property
which will be crucial to our result. However, essentially all of the difficulties in
the  proof of the main theorems are encountered in proving the following key
result whose proof we will postpone.

\begin{theorem}\label{thm:gradest}
  The gradient of the Jacobian of the natural map  exists off of the
  critical locus of $F_s$ and admits the following bound,
  $$\norm{\grad \op{Jac}(F_s)}\leq s^n\( C(n)(1+s)(1+\eta^{-n}) +
 5s\beta\),$$ where $C(n)$ is a constant depending only on $n$, $\eta$ is the
  injectivity radius of $(\til{M},\til{g})$ and  $\beta=\max\set{\rho^3,\norm{\grad \op{Rm}(g_o)}}$.
\end{theorem}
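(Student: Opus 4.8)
The plan is to bound $\grad\op{Jac}(F_s)$ by differentiating the implicit equation that defines $F_s$ and then controlling each resulting term. Recall that $\til F_s(x)=\op{Bar}(\sigma_x^s)$ is characterized by the vanishing of the gradient of $\mc B_{\sigma_x^s}$, i.e. $\int_{\D\til N}dB_\theta(y)\,d\sigma_x^s(\theta)=0$ at $y=\til F_s(x)$. Implicit differentiation in $x$ expresses $d\til F_s$ as $(H_x)^{-1}\circ K_x$, where $H_x$ is the Hessian-type operator $\int \op{Hess}B_\theta\,d\sigma_x^s$ on $T_{y}\til N$ and $K_x$ collects the $x$-dependence of the measure $\sigma_x^s$, which comes from $\frac{d\mu_x^s}{\dvol_g}(z)=e^{-sd(x,z)}$ and hence carries factors of $s\,\grad_x d(x,z)$. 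Differentiating once more gives $\grad\op{Jac}(F_s)$ as a sum of terms built from: (i) $H_x^{-1}$ and its derivative $\grad H_x^{-1}=-H_x^{-1}(\grad H_x)H_x^{-1}$; (ii) first and second $x$-derivatives of the density $e^{-sd(x,z)}$, producing powers of $s$ up to $s^2$ times combinations of $\grad d$ and $\op{Hess}d$; and (iii) the $y$-derivatives of $B_\theta$ up to third order, i.e. $dB_\theta$, $\op{Hess}B_\theta$, and $\grad\op{Hess}B_\theta$. The $s^n$ prefactor and the calibration bound $|\op{Jac}(F_s)|\le (s/u(g_o))^n$ from Theorem \ref{bcg}(3) explain the overall scale; the job is to show the bracketed quantity is $O((1+s)(1+\eta^{-n})+s\beta)$.

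The key estimates are then organized as follows. First, lower-bound the Hessian operator $H_x$: using $K_{g_o}\le -1$ one has $\op{Hess}B_\theta\ge \frac{1}{n-1}(g_o - dB_\theta\otimes dB_\theta)$ in a suitable sense, and the Besson--Courtois--Gallot machinery (the same convexity/positivity arguments behind the existence of barycenters) gives a definite lower bound on the smallest eigenvalue of $H_x$ away from the critical locus, so $\norm{H_x^{-1}}$ is controlled; the dependence on $\rho$ enters here through the comparison of $\op{Hess}B_\theta$ with $g_o$. Second, bound $\grad H_x$: this needs $\grad\op{Hess}B_\theta$, which by the second variation/Riccati equation for Busemann functions is controlled by the curvature and its covariant derivative along $\til N$, yielding the $\norm{\grad\op{Rm}(g_o)}$ term (hence $\beta=\max\{\rho^3,\norm{\grad\op{Rm}(g_o)}\}$ after bundling the pure-curvature contributions, which scale like $\rho^3$). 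Third, bound the $x$-derivatives of the normalized measure: $\grad_x d(x,z)$ is a unit vector, while $\op{Hess}_x d(x,z)$ is only controlled once one stays a definite distance from the cut locus of $x$ in $\til M$ — this is exactly where the injectivity radius $\eta$ of $(\til M,\til g)$ enters, and splitting the integral over $\til M$ into the ball $B(x,\eta)$ (where one uses the lower curvature bound $K_g\ge -1$ and Rauch-type Hessian comparison, giving the $\eta^{-1}$ blow-up of $\op{Hess}d$ near distance $0$) and its complement (where $\op{Hess}d$ is bounded by $1$ coming from $K_g\ge-1$) produces the $1+\eta^{-n}$ factor after accounting for the volume-growth weight $e^{-sd}$ and the normalization $\norm{\mu_x^s}$. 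Assembling: each differentiation of the density costs a factor $s$ (total up to $s^2$ beyond the base $s^n$, matching the $s^n\cdot s\cdot\beta$ and $s^n\cdot(1+s)$ shapes), and the constant $C(n)$ absorbs the dimensional combinatorial factors from the Leibniz expansion and the eigenvalue bounds.

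The main obstacle I expect is the uniform lower bound on the Hessian operator $H_x$ near — but not on — the critical locus of $F_s$, together with extracting a clean power of $\rho$ rather than an implicit constant: one must show that as $\op{Jac}(F_s)(x)\to 0$ the degeneration of $H_x$ is no worse than what the $H_x^{-1}$ factors in $\grad\op{Jac}(F_s)$ can tolerate, i.e. that $\op{Jac}(F_s)$ itself carries enough vanishing to cancel the blow-up of $\norm{H_x^{-1}}$ in the product $\grad\op{Jac}(F_s)=\grad(\det d\til F_s)$. This is a careful linear-algebra argument: writing $d\til F_s = H_x^{-1}K_x$, one has $\op{Jac}(F_s)=\det(H_x^{-1})\det(K_x)$, and $\grad$ of this determinant involves $\tr(H_x^{-1}\grad H_x)$ times $\op{Jac}(F_s)$ plus $\det(H_x^{-1})$ times $\grad\det(K_x)$ — the first piece is fine because the singular $H_x^{-1}$ multiplies the equally-singular $\op{Jac}(F_s)$, and for the second piece one uses that a pointwise-in-$\theta$ lower bound on $\op{Hess}B_\theta$ keeps all but possibly a few eigenvalues of $H_x$ bounded below, so $\det(H_x^{-1})\le C(n)\norm{H_x^{-1}}^{\mathrm{(few)}}$ with the remaining negative powers again absorbed against the structure of $\op{Jac}$. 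The secondary technical nuisance is the cut-locus splitting in $\til M$: verifying that the portion of $\grad\op{Jac}$ coming from $\op{Hess}_x d$ on $B(x,\eta)$ is genuinely integrable with the claimed $\eta^{-n}$ rate requires the volume comparison $\vol_g(B(x,r))\le \vol_{\mathbb H^n}(r)$ (valid since $K_g\ge -1$) to beat the $r^{-1}$ Hessian singularity, which works precisely because $n\ge 2$.
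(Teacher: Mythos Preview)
Your overall architecture matches the paper's: you correctly write $d\til F_s=s\,A^{-1}H$ (your $H_x^{-1}K_x$), differentiate to get $\grad\op{Jac}=s^n\frac{\det H}{\det A}\bigl(\tr(H^{-1}\grad H)-\tr(A^{-1}\grad A)\bigr)$, and identify the three ingredients --- the Riccati estimate for $\nabla\op{Hess}B_\theta$ giving the $\beta$ term, the Hessian of distance on $\til M$ giving the $\eta^{-n}$ term, and the cancellation needed when the Hessian operator $A$ degenerates. You also correctly flag this last cancellation as the main obstacle.

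The genuine gap is in how you resolve that obstacle. You argue that at most ``a few'' eigenvalues of $A$ can be small (correct: only one, since $\op{Hess}B_\theta\geq\Id-\grad B_\theta\otimes\grad B_\theta$), and then assert that the remaining blow-up is ``absorbed against the structure of $\op{Jac}$''. But this is exactly what must be proved, and it does \emph{not} follow from anything you have written. Concretely, the dangerous term is $\frac{\det H}{\det A}\norm{A^{-1}}$: with the smallest eigenvalue of $A$ of size $\eps\norm{\sigma_y^s}$, you have $\norm{A^{-1}}\sim\eps^{-1}$ and $\det A\gtrsim\eps$, so you need $\det H=O(\eps^{(n-3)/2})$ or better. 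Nothing in your argument forces $H$ (your $K_x$) to degenerate at all. The paper's key mechanism, which you are missing, is a \emph{concentration} argument: a small eigenvalue of $A$ in direction $v$ means $\int(1-\inner{\grad B_\theta,v}^2)\,d\sigma_y^s\leq\eps\norm{\sigma_y^s}$, i.e.\ the measure $\sigma_y^s$ sees $\grad B_\theta$ pointing nearly along $\pm v$. Since $H=\int\grad B_\theta\otimes dr_z$, this concentration forces $n-1$ singular values of $H$ to be $\leq\sqrt{\eps}\,\norm{\sigma_y^s}$ (the paper's Proposition~\ref{prop:singvalH}). That yields $\det H\leq\eps^{(n-1)/2}\norm{\sigma_y^s}^n$, and the counting $\frac{\eps^{(n-1)/2}}{\eps}\cdot\eps^{-1}=\eps^{(n-5)/2}$ is precisely why $n>4$ is needed. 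Your proposal neither states this coupling between $A$ and $H$ through the common measure nor produces the dimension threshold.

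A secondary technical point: you claim $\op{Hess}_xd(x,z)$ is bounded by $1$ outside $B(x,\eta)$ from $K_g\geq -1$. Curvature comparison gives only an \emph{upper} bound $\coth(r)$ on eigenvalues; near the cut locus the Hessian can be arbitrarily negative. The paper does not bound $\norm{\op{Hess}d}$ pointwise but instead controls $\int_{\til M}\abs{\Delta r_y}\,d\mu_y^s$ via Stokes' theorem on star-shaped domains approximating the complement of the cut locus, together with $\Delta^+ r_y\leq(n-1)\coth(r_y)$; the $\eta^{-n}$ then enters through a lower bound $\norm{\mu_y^s}\geq C(n)\eta^n(1+s)^{-n}$ needed to absorb the contribution of a small ball around $y$.
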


We will prove this theorem in Section \ref{sec:synthesis}. Using
this we can easily prove Theorem \ref{thm:critical}.

\begin{proof}[Proof of Theorem \ref{thm:critical}]
We begin by replacing the metric $g_o$ on $N$ with a nearby one with nearly
the same volume and curvature pinching, but with better derivatives of
curvature.  The main theorem of \cite{Bemelmans-Min-Oo-Ruh:84} employs a
Ricci flow theorem of Hamilton's to show that on the space of all closed
Riemannian manifolds $(N,g)$ with $-1\leq K(g)\leq 1$, the metrics can be
smoothed. Namely, there are uniform constants $T(n),c(n)$ and $c(n,m)$ and
metrics $g_\eps$ with Riemannian connection $\nabla_\eps$ such that
$$e^{-c(n)\eps}g\leq g_\eps\leq e^{c(n)\eps}g,\quad
\abs{\nabla-\nabla_\eps}\leq c(n)\eps, \quad \abs{\nabla_\eps^m
\op{Rm}(g_\eps)}\leq \frac{c(n,m)}{\eps^\frac{m}{2}}.$$ Moreover, this was
extended in Proposition 2.5 of \cite{Rong:96a} (whose proof Rong attributes to
T. Ilmanen and W.-X. Shi) to show there is a constant $c(n)$ such that $$\inf
K_g-c(n)\eps\leq K_{g_\eps}\leq \sup K_g +c(n)\eps.$$ (See also \cite{Shi:89}
and\cite{Kapovitch:05}.)

Applying this to the metric $\til{g}_o=\rho\cdot g_o$, with lower curvature
bound $-1$, we obtain $C^1$ close metrics $\til{g}_\eps$ with the listed
properties.

We now renormalize $\til{g}_\eps$ to the metric
$g_\eps=\(\frac{1}{\rho^2}-c(n)\eps\)\til{g}_\eps$ so that $g_\eps$ has
curvatures at most $-1$. The above controls imply, $\abs{u(g_\eps)-u(g_o)}\leq
c(n,\rho,\eps)$ and $\abs{\vol_{g_\eps}(N)-\vol_{g_o}(N)}<c(n,\rho,\eps)$ for
some constant $c(n,\rho,\eps)$ tending to 0 as $\eps\to 0$. In particular, since
$u(g_o)$ is bounded in terms of $\rho$ under the curvature assumption, we
have $u(g_o)\abs{\deg(f)}\vol_{g_o}(N)\leq
u(g_\eps)\abs{\deg(f)}\vol_{g_o}(N)
+c(n,\rho,\eps)\abs{\deg(f)}\vol_{g_o}(N)$. Hence for any $\delta>0$ there is
a sufficiently small $\eps$ depending only on
$n,\rho,\abs{\deg(f)}\vol_{g_o}(N)$ and $\delta$ such that
$u(g_o)\abs{\deg(f)}\vol_{g_o}(N)\leq u(g_\eps)\abs{\deg(f)}\vol_{g_o}(N)
+\delta.$ In other words, after decreasing the size of the constant $C$ in the
statement by a small uniform amount, we may assume the metric $g_o$ is such
that $\beta=\max\set{\rho^3,\norm{\grad \op{Rm}(g_o)}}$ is bounded by a
constant involving only $n, \rho, \abs{\deg(f)}\vol_{g_o}(N).$

If $\deg(f)=0$, then the inequality is trivially true. Hence, we will assume
$\deg(f)\neq 0$. By the gradient estimate of Theorem \ref{thm:gradest}, we
have that the Jacobian is at most $\norm{\grad \op{Jac}(F_s)}\ r$ on
$T_r(F_s)$. We want an $r$ such $\norm{\grad \op{Jac}(F_s)}\ r\leq s^n C
r\leq \frac{s^n}{2 u(g_o)^n}$. So we take $r=\frac{1}{2 C u(g_o)^n}$ where
$C=C(n,\eta,\beta)$. Note that $C$ may be treated as independent of $s$ since
$0\leq h(g)\leq n-1$ and we will choose $s$ sufficiently close to $h(g)$. Since
$n-1\leq u(g_o)\leq \rho(n-1)\leq \beta^{\frac13}(n-1)$, $r$ also depends
only on $n,\eta$ and the constant $\beta$.

We are assuming the metric $g_o$ has been smoothed, so that $\beta\leq C_2$
where $C_2=C_2(n,\rho,\abs{\deg(f)}\vol_{g_o}(N))$. Hence the radius $r$
depends only on $n,\rho,\eta$ and $\abs{\deg(f)}\vol_{g_o}(N)$. Finally, in
\cite{Gromov82} it is shown that under our curvature assumptions
$\frac{\vol_{g_o}(N)}{\norm{N}}$ is bounded above and below by constants
depending only on $n$ and $\rho$. (The generalized Gauss-Bonnet formula
shows that the proportionality of $\vol_{g_o}(N)$ to $\abs{\chi(M)}$ is
bounded above and below in the even dimensional case.)

On $T_r(F_s)$ we have the estimate,
$$\abs{\Jac F_s(x)}<\frac12\left(\frac{s}{u(g_o)}\right)^n.$$
Integrating, we have
\begin{align*}
  |\deg(f)|\vol_{g_o}({N})&=\left|\int_{{N}}\deg(f) dg_o\right|\\
 &\leq  \int_{M} |f^*dg_o| \\
 &\leq\int_M |\Jac F_s(y)|dg(y) \\
 &\leq\int_{M-T_r(F_s)} |\Jac F_s(y)|dg(y) +\int_{T_r(F_s)} |\Jac F_s(y)|dg(y)\\
 &< \left(\frac{s}{u(g_o)}\right)^n \(\vol_{g}(M)-\frac12\vol_{g}(T_r(F_s))\).
\end{align*}
Finally, take $s\to h(g)$ and multiply through by $u(g_o)^n$. Recall
that we have scaled $g$ so that $\kappa(g)=1$. If we scale a metric
by a constant $\frac{1}{c^2}>0$ then $\kappa\(\frac{ g}{c^2}\)=c\,
\kappa(g)$, $h\(\frac{g}{c^2}\)=c\, h(g)$ and
$\vol_{\frac{g}{c^2}}\(B_{\frac{g}{c^2}}\(p,\frac{r}{c}\)\)=c^{-n}\vol_{g}\(B_{g}(p,r)\)$.
Therefore scaling the metric back we obtain the given expression.

\end{proof}

Now we show that Theorems \ref{thm:ent-vol} and \ref{thm:vol} easily
follow.
\begin{proof}[Proof of Theorem \ref{thm:ent-vol}]
We use the previous theorem to obtain a condition under which $F_s$
can have no critical points. Suppose the critical locus of some
$F_s$, for $s$ very close to $h(g)$, is not empty. Then it contains
at least one point $p$. Hence if $C\leq \frac12 \vol_{g}(B(p,r))$,
then the inequality could not be satisfied. Hence $F_s$ would be a
local diffeomorphism, and in particular, a smooth $C^1$ cover.
Again, recall that we have scaled $g$ so that $\kappa(g)=1$.

Lastly, we recall a couple of standard results of differential
topology. Any $C^1$ structure on $M$ is $C^1$ equivalent to a $C^k$
structure for $k\in [1,\infty)\cup\set{\infty,\omega}$. Similarly,
any $C^1$ (local) diffeomorphism is homotopic to a $C^k$ (local)
diffeomorphism for any $k\in [1,\infty)\cup\set{\infty,\omega}$ (e.g
see Chapter 2 and Theorem 2.10 of \cite{Hirsch:76}). Hence, we
obtain the $C^\infty$ covering map, call it $F$, stated in the
conclusion which is homotopic to the original $C^0$ map $f$. Using
mollifiers, we can construct $F$ explicitly from $F_s$ and hence,
explicitly from $f$.
\end{proof}

\begin{remark}
If one were interested in the minimum regularity possible, then in
order to state the theorem note that we only need a $C^2$ structure
on $M$ and a $C^3$ structure on $N$. In this case, we obtain a $C^2$
covering map. Also, in the case when $N$ is negatively curved then
we obtain a $C^1$ limit map $\lim_{s\to h(g)} F_s$
(\cite{Besson-Courtois-Gallot:96}), however it is unlikely if this
exists when $M$ has mixed curvatures.
\end{remark}

\begin{proof}[Proof of Theorem \ref{thm:vol}]
For this we note that under the curvature assumptions, $ K_{g} \geq -1$ and $
K_{g_o} \leq -1$, we have $h(g)\leq n-1$ and $u(g_o)\geq n-1$. If $F_s$ has a
critical point at $p$, then by the proof of Theorem \ref{thm:ent-vol} we have
$h(g)^n\(\vol_{g}(M)-\frac12 \vol_{g}(B(p,r))\)\leq
u(g_o)^n\abs{\deg(f)}\vol_{g_o}(N)$. Therefore we obtain,
\begin{align*}
(n-1)^n\(\vol_{g}(M)-\frac12 \vol_{g}(B(p,r))\)&\geq h(g)^n
\(\vol_{g}(M)-\frac12\vol_{g}(B(p,r))\)\\
&\geq u(g_o)^n\abs{\deg(f)}\vol_{g_o}(N)\\
&\geq (n-1)^n\abs{\deg(f)}\vol_{g_o}(N).
\end{align*}
To finish, we note that by a classical result of Berger (see
\cite{Croke:80} for an improved constant), $\vol_{g}(B(p,r))\geq
C(n) r^n$ for all $r\leq \delta$ where $C(n)$ only depends on $n$
and $\delta$ is the injectivity radius of $M$.
\end{proof}
\section{Jacobian Estimates}\label{sec:Jac}

The barycenter of $\sigma_\lambda$ is defined to be the minimum of
the $C^1$-function $\CB_{\sigma_\lambda}(\cdot)$. In particular,
$\op{Bar}(\sigma_\lambda)=x$ if and only if the gradient of
$\mathcal{B}_{\sigma_\lambda}$ vanishes at $x$. This gradient can be
computed as follows
$$\nabla_x\mathcal{B}_{\sigma_\lambda}=\int_{\D\til{{N}}}\grad_x
B_\theta\ d\sigma_\lambda(\theta)=\int_{\til{{N}}}\int_{\D\til{{N}}}
\grad_x B_\theta\ d\nu_z(\theta)\  d\lambda(z),$$ where $\grad_x
B_\theta$ is the unit vector in $T_x\til{{N}}$ pointing to
$\theta\in\D\til{{N}}$. Applying this to $\lambda=\mu_y^s$, we have
$\sigma_\la=\sigma_y^s$ and the gradient vanishes at
$x=\til{F}_s(y)$. We denote by $r_z$ the function $r_z(x)=d(x,z)$.
Taking the covariant derivative of the gradient with respect to $y$,
i.e. directions $v\in T_y\til{M}$, yields
\begin{align*}
0=D_v\nabla_{F_s(y)}\mathcal{B}_{\sigma_y^s}=\int_{\D\til{{N}}}
&D_{d_y F_s(v)}\grad B_\theta\  d\sigma_y^s(\theta)\\
&-s\int_{\til{M}}\int_{\D\til{{N}}} \grad_{ F_s(y) } B_\theta\tensor
d_y r_z(v)\ d\nu_{\tilde{f}(z)}(\theta)\, d\mu_y^s(z).
\end{align*}

Therefore we have,
\begin{align*}
d_yF_s&=s\(\int_{\D\til{{N}}} Dd_{F_s(y)}B_\theta d\sigma_y^s\)^{-1}
\int_{\til{M}}\(\int_{\D\til{{N}}} \grad_{ F_s(y) } B_\theta \
d\nu_{\tilde{f}(z)}(\theta)\)\tensor d_y r_z(y)
 \, d\mu_x^s(z),
\end{align*}
where $Dd_{F_s(y)}B_\theta$ is the (1,1)-tensor associated to the
Hessian of $B_\theta$ at the point $F_s(y)$. More specifically, it
is the self adjoint linear map from $T_{F_s(y)}\til{{N}}\to
T_{F_s(y)}\til{{N}}$ such that
$Dd_{F_s(y)}B_\theta(\grad_{F_s(y)}B_\theta)=0$ and
$Dd_{F_s(y)}B_\theta$ restricted to
$\(\grad_{F_s(y)}B_\theta\)^\perp$ is the second fundamental form of
the horosphere through $F_s(y)$ and tangent to $\theta$.

We can rewrite the previous expression more concisely as
\begin{align*}
d_yF_s=s\underbrace{\left(\int_{\D \til{{N}}} DdB_{\theta}\
d\sigma_y^s\right)^{-1}}_{A^{-1}}\underbrace{\int_{\widetilde{M}}\int_{\D
\til{{N}}} \grad B_\theta \tensor d r_z
\,d\nu_{\tilde{f}(z)}(\theta)\, d\mu_y^s(z)}_{H}.
\end{align*}
For any $v\in T_{y}\til{M}$ we have, assuming the directional
derivatives exist,
\begin{align}
\begin{split}\label{eq:Jacob}
\grad_v\Jac{F_s}&=s^n\grad_v\frac{\det H}{\det A} \\
&=s^n \frac{\det H}{\det A}\left( \tr(\grad_v H\,H^{-1})-\tr(\grad_v
A\ A^{-1})\right)\\
&=s^n\underbrace{\frac{\det H}{\det A}}_{\circled{1}}\left(
\underbrace{\tr( H^{-1}\grad_v H)}_{\circled{2}}-\underbrace{\tr(
A^{-1}\grad_v A)}_{\circled{3}}\right)
\end{split}
\end{align}
where the traces and determinants are with respect to the metrics $g_o$ on
$T_{F_s(y)}\til{N}$ and $g$ on $T_y\til{M}$.

We can compute the derivative terms as,
\begin{align*}
\grad_v H&=\Biggr(\int_{\widetilde{M}}\int_{\D
\til{{N}}}\Big[D_{d_yF_s(v)}dB_{\theta}\tensor dr_z+\grad
B_{\theta}\tensor
Dd r_z(v)\\
&\hspace{4.5cm}-s\grad B_{\theta}\tensor dr_z\ \inner{\grad_yr_z,v}\Big]\ d\nu_{\tilde{f}(z)}(\theta)\, d\mu_y^s(z)\Biggr)\\
\intertext{and}& \\
\grad_v A&=\int_{\widetilde{M}}\int_{\D \til{{N}}}
\Big[D_{d_yF_s(v)}DdB_{\theta}\ -s\
DdB_{\theta}\inner{\grad_yr_z,v}\Big]\ d\nu_{\tilde{f}(z)}(\theta)\,
d\mu_y^s(z).
\end{align*}

The existence of $\grad_v\op{Jac}(F)$ will follow from the continuity of the
terms, assuming they can be bounded. Except where otherwise specified, for
the remainder of  the paper $\norm{A}$ will represent the operator norm
(largest singular value) on the tensor $A$ induced from the metric norm on
tangent vectors and cotangent vectors. For a measure $\nu$, the quantity
$\norm{\nu}$ is its total mass. We will concentrate on the estimates of
$\circled{2}$ and $\circled{3}$ in terms of $\circled{1}$ for the remainder of
this section.

Recall that  $\eta$ is the injectivity radius of the universal cover of $M$. Note
that $\eta=\injrad(\til{g})\geq \injrad{g}=\delta$.
\begin{prop}\label{prop:gradH}
We have $\norm{\grad H}\leq C(1+s)\(1+\eta^{-n}\)\norm{\mu_y^s}$ for a
constant $C$ depending only on $n$.
\end{prop}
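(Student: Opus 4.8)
The plan is to bound each of the three terms of the displayed formula for $\grad_v H$ separately and uniformly over unit vectors $v$, since $\norm{\grad H}$ is the supremum of $\norm{\grad_v H}$ over such $v$. Throughout set $A_z=\int_{\D\til N}DdB_\theta\,d\nu_{\til f(z)}(\theta)$ and $w_z=\int_{\D\til N}\grad B_\theta\,d\nu_{\til f(z)}(\theta)$, so that $A=\int_{\til M}A_z\,d\mu_y^s(z)$, $H=\int_{\til M}w_z\otimes dr_z\,d\mu_y^s(z)$, and $\norm{w_z}\le 1$, $\norm{dr_z}=1$. The third term is immediate: since $\norm{\grad B_\theta}=\norm{\grad_y r_z}=1$ and the measures $\nu_{\til f(z)}$ are probabilities, its integrand has operator norm at most $s\norm{v}$, so it contributes at most $s\norm{\mu_y^s}$.

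For the first term the key is to substitute $d_yF_s=sA^{-1}H$ \emph{before} estimating, rather than bounding $\norm{d_yF_s}$ on its own (which is not controlled by the quantities in the statement). Performing the inner integration over $\D\til N$ turns the first term into $s\int_{\til M}(A_zA^{-1}H)(v)\otimes dr_z\,d\mu_y^s(z)$. As $K_{g_o}\le -1$, horospheres are convex, so $DdB_\theta\ge 0$ and each $A_z$ is positive semidefinite; since $A$ is positive definite, $A_zA^{-1}$ is conjugate to the positive semidefinite matrix $A^{-1/2}A_zA^{-1/2}$, whence $\norm{A_zA^{-1}}\le\tr(A_zA^{-1})$. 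Integrating and using $\int_{\til M}A_z\,d\mu_y^s=A$ gives $\int_{\til M}\norm{A_zA^{-1}}\,d\mu_y^s(z)\le\tr(AA^{-1})=n$, so the first term contributes at most $ns\norm{\mu_y^s}$, with no dependence on $\rho$ or on $\norm{d_yF_s}$.

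Everything thus reduces to the second term $\int_{\til M}w_z\otimes Ddr_z(v)\,d\mu_y^s(z)$, whose operator norm is at most $\int_{\til M}\norm{Ddr_z}\,d\mu_y^s(z)$; the task is to bound this last integral by $C(n)(1+s)(1+\eta^{-n})\norm{\mu_y^s}$, and this is where the real difficulty lies. The tensor $Ddr_z$ is not uniformly bounded — it has size $\sim d(y,z)^{-1}$ as $z\to y$ and can be very negative when $y$ is near the conjugate locus of $z$ — so the operator norm cannot simply be passed through the integral; one must exploit the compensating degeneracy of the volume element where $Ddr_z$ is singular. The positive side is handled by the Hessian comparison theorem: from $K_g\ge -1$ one gets, off the cut locus, $Ddr_z\le\coth(d(y,z))\,\Id$, and $\coth t\le 1+t^{-1}$. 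For the negative side, a Riccati comparison using $K_g\ge -1$ shows that an eigenvalue of $Ddr_z$ below $-L$ would force a conjugate point of $z$ on the minimizing geodesic to $y$ at parameter at most $d(y,z)+2/L$; since the injectivity radius of $\til M$ bounds the conjugate radius below by $\eta$, this yields $Ddr_z\ge -C(\eta-d(y,z))^{-1}\Id$ when $d(y,z)<\eta$, while for $d(y,z)\ge\eta$ the points $z$ at which $Ddr_z$ is large lie in a thin tube about the conjugate locus, whose small volume — by the Bishop bound $\vol_g(B(p,r))\le C(n)r^n$, $r\le 1$ — absorbs the blow-up. Splitting the integral at the scale $\min(1,\eta,s^{-1})$, estimating the $d(y,z)^{-1}$ and $(\eta-d(y,z))^{-1}$ contributions with the Bishop bound, and normalizing by $\norm{\mu_y^s}\ge c(n)\min(1,\eta,s^{-1})^n$ — a consequence of the lower volume bound $\vol_g(B(p,r))\ge c(n)r^n$ for $r\le\eta$, the same estimate of Berger used in the proof of Theorem \ref{thm:vol} — produces the factor $(1+s)(1+\eta^{-n})$, the region $d(y,z)\ge\eta$ contributing only $O(\eta^{-1})\norm{\mu_y^s}$. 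Adding the three contributions gives the bound. The main obstacle is exactly this last step: balancing the unboundedness of $Ddr_z$ near the conjugate locus against the collapse of volume there, which is precisely where the injectivity-radius hypothesis (equivalently, a lower bound on the volume of small balls) is genuinely used.
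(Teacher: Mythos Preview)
Your treatment of the third term is fine, and your overall plan of bounding the three summands of $\grad_vH$ separately is exactly right. But there are genuine gaps in your handling of both the first and second terms.

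\textbf{First term.} The step ``$A_zA^{-1}$ is conjugate to the positive semidefinite matrix $A^{-1/2}A_zA^{-1/2}$, whence $\norm{A_zA^{-1}}\le\tr(A_zA^{-1})$'' is false. Similarity preserves eigenvalues and traces, but the operator norm is the largest \emph{singular} value, which is not preserved. For instance, with $A_z=\begin{pmatrix}1&1\\1&1\end{pmatrix}$ and $A=\begin{pmatrix}1&0\\0&\eps\end{pmatrix}$ one computes $\norm{A_zA^{-1}}=\sqrt{2(1+\eps^{-2})}\sim\sqrt{2}\,\eps^{-1}$, while $\tr(A_zA^{-1})=1+\eps^{-1}$. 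So your bound $\int\norm{A_zA^{-1}}\,d\mu_y^s\le n$ is unjustified, and without it you have no control independent of $\norm{A^{-1}}$. The paper avoids this by not passing the norm inside the $z$-integral: it instead views the first term as $\bigl(\int dr_z(u)\,A_z\,d\mu_y^s\bigr)\circ d_yF_s$ and invokes a lemma (Lemma~\ref{lem:norm2}) stating that for positive semidefinite $B_i$ and scalars $|\rho_i|\le 1$, one has $\norm{(\sum\rho_iB_i)C}\le 3\norm{(\sum B_i)C}$. Applied with $B_i\to A_z$, $\rho_i\to dr_z(u)$, $C\to d_yF_s=sA^{-1}H$, this gives $3\norm{A\cdot sA^{-1}H}=3s\norm{H}$ directly.

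\textbf{Second term.} Your Riccati argument correctly yields $Dd r_z\ge -C(\eta-d(y,z))^{-1}\Id$ for $d(y,z)<\eta$, but this pointwise bound is \emph{not integrable}: $\int_{B(y,\eta)}(\eta-d(y,z))^{-1}\,dg$ diverges logarithmically at the sphere $d(y,z)=\eta$, regardless of any Bishop upper volume bound. Your claim for $d(y,z)\ge\eta$ that the bad set is a ``thin tube about the conjugate locus'' of controllable volume is also unsubstantiated; the conjugate locus can be a large set, and the Bishop bound on balls says nothing about tubes around it. The paper takes a completely different route: it first proves the scalar estimate $\int_{\til M}|\Delta r_y|\,d\mu_y^s\le C(1+s)(1+\eta^{-n})\norm{\mu_y^s}$ via Stokes' theorem on star-shaped domains exhausting $\til M\setminus\op{Cut}_y$ (obtaining $\int\Delta r_y\,d\mu_y^s\ge s\norm{\mu_y^s}$), combined with the comparison upper bound $\Delta^+r_y\le(n-1)\coth(r_y)$. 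The full operator-norm bound then follows from the elementary inequality $\norm{Ddr_z}\le|\Delta r_z|+(n-1)\coth(r_z)$, which holds because every eigenvalue of $Ddr_z$ is at most $\coth(r_z)$. This bypasses any analysis of the conjugate locus entirely; the injectivity radius enters only through the lower bound $\norm{\mu_y^s}\ge C(n)\eta^n(1+s)^{-n}$ used to absorb the one additive constant.
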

Before proving this, we will need a lemma.

\begin{lemma}
We have $$\int_{\til{M}} \abs{\Delta_y r_z}d\mu^s_y(z)\leq
C(1+s)\(1+\eta^{-n}\) \norm{\mu_y^s},$$ where $C$ is a constant
depending only on $n$ and $\eta$ is the injectivity radius of
$\til{M}$.
\end{lemma}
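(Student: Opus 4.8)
The plan is to split the integrand as $\abs{\Delta_y r_z}=2(\Delta_y r_z)_+-\Delta_y r_z$ and to estimate the two resulting integrals separately. Since $ K_{g} \geq -1$ forces $\Ric(g)\geq -(n-1)$, the Laplacian comparison theorem gives, in the barrier (hence a.e.) sense, $\Delta_y r_z\leq (n-1)\coth d(y,z)$ for a.e.\ $z$; in particular $(\Delta_y r_z)_+\leq (n-1)\coth d(y,z)$. So it suffices to bound each of
$$\int_{\til M}\coth d(y,z)\,d\mu_y^s(z)\qquad\text{and}\qquad -\int_{\til M}\Delta_y r_z\,d\mu_y^s(z)$$
by a constant depending only on $n$ times $(1+s)(1+\eta^{-n})\norm{\mu_y^s}$.

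First I would dispatch the $\coth$ integral by the co-area formula with respect to the geodesic spheres $S(y,t)$, together with the Bishop area bound $\op{area}(S(y,t))\leq \omega_{n-1}(\sinh t)^{n-1}$ (again from $\Ric(g)\geq -(n-1)$) and the small-ball volume lower bound $\vol_{g}(B(y,r))\geq c(n)r^n$ for $r\leq \min(1,\eta)$ (the Croke/Berger estimate quoted in the paper, using $ K_{g} \geq -1$ and $r\leq \injrad(\til g)=\eta$). Splitting the radial integral at $t=1$: on $(0,1]$ one has $\coth t\,(\sinh t)^{n-1}\leq C(n)t^{n-2}$, so that part is $\leq C(n)\int_0^1 t^{n-2}e^{-st}\,dt\leq C(n)\min(1,s^{1-n})$; on $[1,\infty)$ one has $\coth t\leq 2$, so that part is $\leq 2\norm{\mu_y^s}$. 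Combining this with $\norm{\mu_y^s}\geq e^{-1}\vol_{g}\!\big(B(y,\min(1,\eta,1/s))\big)\geq c(n)\min(1,\eta,1/s)^n$, and distinguishing the cases $s\leq 1$ and $s>1$ (using $\int_0^1 t^{n-2}e^{-st}\,dt\leq \Gamma(n-1)s^{1-n}$ in the latter), yields the desired bound.

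The heart of the matter is the lower bound for $\int_{\til M}\Delta_y r_z\,d\mu_y^s(z)$, the difficulty being that $ K_{g} $ has no upper bound and so $\Delta_y r_z$ has no pointwise lower bound. Writing $r_y(\cdot)=d(\cdot,y)$, so that $e^{-sd(y,z)}=e^{-sr_y(z)}$, the divergence theorem on the complete manifold $\til M$ applies to the integration variable $z$: since $\abs{\grad_z r_y}=1$ a.e.\ and $s>h(g)$ makes $e^{-sr_y}$ decay faster than the volume growth of spheres (and the cut-locus contribution to $\Delta r_y$ is a nonpositive measure integrated against $e^{-sr_y}>0$), one gets $\int_{\til M}(\Delta_z r_y)\,e^{-sd(y,z)}\,d\vol_{g}(z)=s\norm{\mu_y^s}$. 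Hence
$$\int_{\til M}\Delta_y r_z\,d\mu_y^s(z)=s\norm{\mu_y^s}+\int_{\til M}\big(\Delta_y r_z-\Delta_z r_y\big)\,e^{-sd(y,z)}\,d\vol_{g}(z),$$
and it remains to bound this ``discrepancy'' integral in absolute value by $C(n)(1+s)(1+\eta^{-n})\norm{\mu_y^s}$.

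Bounding the discrepancy integral is the step I expect to cost the most. Along the minimizing geodesic $\sigma$ from $y$ to $z$, the quantity $\Delta_y r_z-\Delta_z r_y$ is the difference of the mean curvatures, at the two endpoints, of the two dual families of geodesic spheres containing $\sigma$; expressed through the Jacobi matrices along $\sigma$ and the Wronskian identities relating a Jacobi field vanishing at one endpoint to one vanishing at the other, this difference is $O\!\big(d(y,z)^2\big)$ times a factor built from $\grad\op{Rm}(g)$ near the diagonal, while away from the diagonal each of $\Delta_y r_z,\Delta_z r_y$ is $\leq (n-1)\coth d(y,z)$. Thus the discrepancy genuinely involves $\grad\op{Rm}(g)$ pointwise, yet the claimed constant depends only on $n$; the resolution must use the injectivity-radius lower bound $\eta$, which prevents curvature and its derivatives from concentrating on a set small enough to spoil the estimate, so that after re-running the co-area/Bishop argument the curvature-derivative contribution is absorbed into the factor $(1+\eta^{-n})$. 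Making this absorption rigorous --- rather than bounding the discrepancy crudely pointwise --- is the delicate point, and is where I would concentrate the work; once it is done the three estimates combine to give $\int_{\til M}\abs{\Delta_y r_z}\,d\mu_y^s(z)\leq 2(n-1)\int\coth d\,d\mu_y^s+\big|\text{discrepancy}\big|\leq C(n)(1+s)(1+\eta^{-n})\norm{\mu_y^s}$.
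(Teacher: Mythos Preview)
Your overall architecture matches the paper's almost exactly: the same decomposition $|\Delta_y r_z|=2(\Delta_y r_z)_+-\Delta_y r_z$, the same Laplacian-comparison bound $(\Delta_y r_z)_+\le(n-1)\coth d(y,z)$ from $K_g\ge -1$, the same near/far split of the $\coth$ integral combined with Bishop above and Berger--Croke below to absorb the small-radius piece into a multiple of $(1+\eta^{-n})\norm{\mu_y^s}$, and the same Stokes computation in the $z$-variable to obtain $\int_{\til M}\Delta_z r_y\,d\mu_y^s\ge s\norm{\mu_y^s}$. (The paper gets only $\ge$ rather than your claimed equality, handling the cut locus by working on star-convex domains $\Omega_\eps$ and using $\inner{\grad r_y,\hat n_{\Omega_\eps}}\ge 0$; this is the inequality you actually need.)

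The one substantive divergence is your ``discrepancy'' paragraph. The paper does not estimate $\Delta_y r_z-\Delta_z r_y$ at all: it opens the proof by asserting $\Delta_y d(y,z)=\Delta_z d(y,z)$, citing the Sturm--Liouville structure of spherical Jacobi tensors (Eschenburg--Heintze), and thereafter works entirely with $\Delta_z r_y$. So according to the paper your hardest step simply disappears. Your proposed alternative---bounding the discrepancy pointwise through $\grad\op{Rm}(g)$ and then absorbing it into the factor $(1+\eta^{-n})$---has a genuine gap: the hypotheses on $(M,g)$ impose no bound on $\grad\op{Rm}(g)$, nor even on $\sup K_g$; only $K_g\ge -1$ is available, and a lower bound on $\injrad(\til g)$ does not by itself prevent $\grad\op{Rm}(g)$ from being arbitrarily large on a set of full measure. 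There is no mechanism in your sketch by which the $\eta^{-n}$ factor could swallow an uncontrolled curvature derivative, so this route cannot close as written.

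That said, your instinct that something needs checking here is not unreasonable. The Wronskian identity you allude to immediately yields equality of the Jacobi \emph{determinants} at the two endpoints (symmetry of $|\det d\exp|$), but the corresponding statement for the \emph{traces} of the two shape operators---which is what $\Delta_y d(y,z)=\Delta_z d(y,z)$ asserts---is more delicate, and it is worth confirming that the cited reference really delivers it before relying on it.
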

\begin{proof}
Set $r_z(y)=d(y,z)$. Since spherical Jacobi tensors satisfy the Sturm-Liouville
equation, we have $\Delta_y d(y,z)=\Delta_z d(y,z)$ (see
\cite{Eschenburg-Heintze90}/. So we estimate $\int_{\til{M}} \abs{\Delta_z
r_y}d\mu^s_y(z)$. If $\hat{n}_{\Omega}$ is the outward pointing normal to a
domain $\Omega\subset \til{M}$ with smooth boundary, then we have by
Stokes Theorem,
\begin{align*}
\int_{\D \Omega}\inner{\grad e^{-s r_y},\hat{n}_\Omega}dg_{\D\Omega}&=
\int_{\Omega}
\Delta e^{-sr_y} dg\\
&=\int_\Omega \op{div}(-s e^{-sr_y}\grad r_y)dg\\
&= \int_\Omega -s e^{-sr_y}\Delta r_y +\inner{\grad -s
e^{-sr_y},\grad r_y}dg\\
&=\int_\Omega -s e^{-sr_y}\Delta r_y +s^2 e^{-s r_y}\inner{\grad
r_y,\grad r_y}dg.
\end{align*}
 Since
$\abs{\grad r_y}=1$, we obtain
\begin{align*}
\int_{\Omega} \Delta r_y\  e^{-sr_y}dg&=s\int_{\Omega} e^{-sr_y}dg
-\frac1s\int_{\D
\Omega}\inner{\grad e^{-s r_y},\hat{n}_\Omega}dg_{\pa \Omega} \\
&=s\int_{\Omega} e^{-sr_y}dg +\int_{\D \Omega} e^{-sr_y}\inner{\grad
r_y,\hat{n}_\Omega}dg_{\pa \Omega}.
\end{align*}

Let $\op{Cut}_y\subset \til{M}$ denote the cut locus from $y$, and for any
$r>0$,  set $V(r)=\exp_y^{-1}(B(y,r)-\op{Cut}_y)$. The set $V(r)\subset
T_y\til{M}$ is star convex from $0$. By retracting each ray from $y$ to the
points in $\pa V(r)$ by an appropriate amount between $0$ and $\eps$ we can
obtain a domain $V_\eps(r)\subset V(r)$ with smooth boundary. By
construction $V_\eps(r)$ is star convex from $0$ and since $\exp_y$ is a
diffeomorphism on $V(r)$, the image $\Omega_\eps =\exp_y V_\eps(r)$ will
have smooth boundary and is star convex from $y$. If $\hat{n}_\eps$ is the
outward pointing normal field to $\pa \Omega_\eps$ then the star convexity of
$V_\eps(r)$ implies that $\inner{\grad_z r_y,\hat{n}_\eps(z)}\geq 0$ for all
$z\in \pa \Omega_\eps$. Applying the previous estimate to
$\Omega=\Omega_\eps$ we obtain,

\begin{align*}
\int_{\Omega_\eps} \Delta r_y\  e^{-sr_y}dg &=s\int_{\Omega_\eps}
e^{-sr_y}dg +\int_{\pa \Omega_\eps}e^{-s r} \inner{\grad
r_y,\hat{n}_\Omega}dg_{\pa \Omega}\\
&\geq  s\int_{\Omega_\eps} e^{-sr_y}dg.
\end{align*}

Taking $\eps \to 0$ we obtain that $\int_{B(y,r)-\op{Cut}_y} \Delta r_y\
e^{-sr_y}\ dg\geq  s\int_{B(y,r)-\op{Cut}_y} e^{-sr_y}dg.$ In fact $\Delta r_y
dg$ extends to the cut locus as well as a signed measure (or see the bottom of
p.257 in\cite{Petersen98} for how to define $\Delta r_y$ using support
functions). Since the cut locus has $0$ measure, and the Laplacian is controlled
there, we can ignore it and simply write,
$$\int_{B(y,r)} \Delta
r_y\ e^{-sr_y}\ dg\geq  s\int_{B(y,r)} e^{-sr_y}dg.$$ Finally,
taking $r\to\infty$ we obtain,
$$\int_{\til{M}}\Delta r_y\ d\mu_y^s\geq  s \norm{\mu_y^s}.$$

Set $\Delta^+ r_y=\max\set{\Delta r_y,0}$ and $\Delta^- r_y=\min\set{\Delta
r_y,0}$. These are defined off of the cut locus of $y$ in $\til{M}$. Recall that this
has measure $0$. The lower curvature bound on $M$ implies by a standard
Ricci comparison result (see e.g. \cite{Chavel93}) that off of the cut locus of
$\til{M}$ from $y$, the mean curvature of distance spheres, i.e. the trace of the
shape operator, of $S(y,r_y(z))$ is less than $(n-1)\coth(r_y(z))$. In other
words, $\Delta^+ r_y\, (z)\leq (n-1)\coth(r_y(z))$. Therefore we have
\begin{align*}
\int_{\til{M}} \abs{\Delta r_y}d\mu^s_y(z)&=\int_{\til{M}} \Delta^+
r_y \ d\mu^s_y(z)-\int_{\til{M}}  \Delta^- r_y \ d\mu^s_y(z)\\
&=2\int_{\til{M}} \Delta^+ r_y \ d\mu^s_y(z)-\int_{\til{M}} \Delta
r_y \ d\mu^s_y(z)\\
&\leq 2(n-1)\int_{\til{M}} \coth(r_y) \
d\mu^s_y(z)-s\norm{\mu_y^s}\\
\end{align*}

On the other hand, for the restricted metric $g'$ on $S(y,t)$ we may write
$\vol_{g'}(S(y,t))=\int_{S_y\til{M}}\dvol(v,t) dv$ in terms of the radial spherical
volume element $\dvol$. From the curvature assumptions we have
$\dvol(v,t)\leq \(\sinh(t)\)^{n-1}$.

Hence we have
$$\int_{B\(y,\frac{1}{2+s}\)}\coth(r_y)d\mu^s_y(z)\leq
\omega_{n-1}\int_{0}^{\frac{1}{2+s}} \cosh(t)\sinh^{n-2}(t)e^{-s t}\
dt\leq \frac{\omega_{n-1}}{(1+s)^{n-1}}.$$ For the last inequality
we used the fact that $\sinh(t)\leq \frac{t e^t}{1+t}$ and that
$\cosh(t)<2$ on the interval.

Since $\coth(t)\leq 1+\frac{1}{t}$ for all $t>0$, we have
$$\int_{\til{M}\setminus B\(y,\frac{1}{2+s}\)}\coth(r_y)d\mu^s_y(z)\leq
(3+s)\int_{\til{M}\setminus B\(y,\frac{1}{2+s}\)}d\mu^s_y(z)\leq
(3+s)\norm{\mu_y^s}.$$ Hence,

\begin{align}\label{eq:almost}
\int_{\til{M}} \abs{\Delta r_y}d\mu^s_y(z)\leq 2(n-1)\((3+s)
\norm{\mu^s_y}+\frac{\omega_{n-1}}{(1+s)^{n-1}}\)-s\norm{\mu_y^s}.
\end{align}

For later use it will be essential that this bound scale linearly in
$\norm{\mu_y^s}$ as in the statement of the lemma. Hence we must bound the
$\frac{\omega_{n-1}}{(1+s)^{n-1}}$ in terms of the size of this measure.

By a result of Berger (see \cite{Croke:80},\cite{Croke:88} for a stronger
version) there is a constant $C_1(n)\geq \frac{\omega_n}{\pi^n}$ depending
only on $n$, such that $\vol_{g}(B(x,r))\geq C_1(n)r^n$ for all $r<\eta$. We
then have,

\begin{align*}
\norm{\mu_y^s}&=\int_0^\infty e^{-st}\vol^{\prime}(B(y,t))\ dt\\
&\geq \int_0^\eta e^{-st}\vol^{\prime}(B(y,t))\ dt\\
&=e^{-s\eta}\vol_{g}(B(y,\eta))+s\int_0^\eta
e^{-st}\vol_{g}(B(y,t))\ dt\\
&\geq e^{-s\eta}\vol_{g}(B(y,\eta))+s C_1(n)\int_0^\eta e^{-st}t^n \
dt\\ &=e^{-s\eta}\vol_{g}(B(y,\eta))+s
C_1(n)\frac{\min\set{\eta^{1+n},(n+1)!}}{(1+n)(1+s)^{n+1}}\\
& \geq C_2(n)\frac{\eta^n}{(1+s)^n}
\end{align*}
where the first equality is by expressing the volume of the sphere as the
derivative of the volume of the ball. The next inequality is just restricting the
integral to the finite domain $[0,\eta]$. The next equality is integration by parts.
The next inequality follows from the estimate for the volume of balls up to the
injectivity radius mentioned above. The integral is then evaluated by noting that
the function $(1+s)^n\int_0^\eta e^{-st}t^n \ dt$ is either monotone or
unimodal in $s$ depending on $\eta$, and hence it is bounded below by its
limits as $s\to 0$ or $s\to \infty$. The last inequality follows from noting that
the first term is larger than $\frac{1}{3}C_1(n)\eta^n$ for $s\leq
\frac{1}{\eta}$ and then choosing a sufficiently small constant $C_2(n)$.

In particular, $$\frac{\omega_{n-1}}{(1+s)^{n-1}}\leq (1+s)C_3(n)\eta^{-n}
\norm{\mu_y^s}$$ for some constant $C_3(n)$ depending only on $n$. Putting
this together with the estimate \eqref{eq:almost} and choosing
$C(n)=\max\set{6n-6,(2n-2)C_3(n)}$ gives the lemma.

\end{proof}

\begin{remark}
We could have shortened the end of the previous lemma slightly if
the lemma just asked for an unspecified bound in terms of $s$ and
the injectivity radius of $\til{M}$. In fact, the linear dependence
on $s$ and $\eta$ is optimal up to constants. To see this, we can
take a manifold with $h(g)=1$ and with y at the tip of a long spike
with injectivity radius $\eta$.

This estimate is the only term which does not have $s$ as a factor.
This is in fact necessary since if $M$ has constant curvature $-1$,
then $\norm{\int_{\til{M}} Ddr_z d\mu_y^s}> \norm{\mu_y^s}$.
\end{remark}

We now establish a couple of nontrivial  properties of the operator norm which
we will need later.
\begin{lemma}\label{lem:norm1}
For any square matrix $C\in M_{nxn}$ and symmetric matrices $A\in M_{nxn}$
and $B\in M_{nxn}$ satisfying $\norm{A(v)}\leq\norm{B(v)}$ for all
$v\in\R^n$ we have
$$\norm{AC}\leq \norm{BC}\quad\text{and}\quad
\norm{CA}\leq\norm{CB}.$$
\end{lemma}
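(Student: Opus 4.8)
The plan is to reduce both inequalities to a single statement about operator norms of products and then exploit that the operator norm is unchanged under taking adjoints. First I would recall the two basic facts I intend to use: for any linear maps $X,Y$ one has $\norm{XY}\le\norm{X}\,\norm{Y}$ is \emph{not} enough here (that would lose the matching with $C$), so instead I would use the variational characterization $\norm{X}=\sup_{\norm{w}=1}\norm{Xw}$ together with the identity $\norm{X}=\norm{X^{*}}$, where $X^{*}$ denotes the adjoint with respect to the relevant inner products. The hypothesis $\norm{A v}\le\norm{B v}$ for all $v$ is exactly the statement that $B^{*}B-A^{*}A=B^2-A^2\succeq 0$ (using that $A,B$ are symmetric), i.e. $A^2\preceq B^2$ as quadratic forms.

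The first inequality $\norm{CA}\le\norm{CB}$ is the easy one: for every unit vector $v$,
\[
\norm{CAv}\le \norm{C}\,\norm{Av}\le\norm{C}\,\norm{Bv},
\]
but this only gives $\norm{CA}\le\norm{C}\,\norm{B}$, which is too weak. So instead I would argue directly from the quadratic form inequality: $\norm{CAv}^2=\langle A^{*}C^{*}CAv,v\rangle=\langle C^{*}C\,(Av),Av\rangle\le \norm{C^{*}C}_{\mathrm{op}}\cdot$\dots\ — again this loses the coupling. The correct route is: $\norm{CA}^2=\norm{(CA)(CA)^{*}}=\norm{CA^2C^{*}}$ (since $A$ symmetric), and since $A^2\preceq B^2$ we have $CA^2C^{*}\preceq CB^2C^{*}$ as positive semidefinite operators; monotonicity of the operator norm on positive semidefinite operators then gives $\norm{CA^2C^{*}}\le\norm{CB^2C^{*}}=\norm{CB}^2$, hence $\norm{CA}\le\norm{CB}$. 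For the second inequality $\norm{AC}\le\norm{BC}$, apply the first one to the transposed data: $\norm{AC}=\norm{(AC)^{*}}=\norm{C^{*}A^{*}}=\norm{C^{*}A}$, and now $C^{*}$ plays the role of the arbitrary matrix while $A,B$ are still symmetric with $\norm{Av}\le\norm{Bv}$, so the already-proved inequality yields $\norm{C^{*}A}\le\norm{C^{*}B}=\norm{BC}$.

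The main obstacle — and the reason the lemma is stated separately rather than used as a triviality — is precisely that the naive submultiplicativity $\norm{XY}\le\norm{X}\norm{Y}$ destroys the cancellation one needs: one must pass through $\norm{X}=\norm{XX^{*}}^{1/2}$ and the fact that conjugation $Z\mapsto CZC^{*}$ preserves the order $\preceq$ on symmetric (equivalently, positive semidefinite) matrices, together with operator-norm monotonicity on the positive semidefinite cone. I would include a one-line justification of that monotonicity: if $0\preceq P\preceq Q$ then $\norm{P}_{\mathrm{op}}=\lambda_{\max}(P)\le\lambda_{\max}(Q)=\norm{Q}_{\mathrm{op}}$ by the min-max principle. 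With these two ingredients the proof is short, and the adjoint trick handles the left/right symmetry of the statement without repeating the argument.
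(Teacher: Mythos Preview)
Your argument is correct and is somewhat different from the paper's. The paper proceeds by first reducing to $B$ invertible (taking limits otherwise), then observing that $\norm{Av}\le\norm{Bv}$ for all $v$ yields $\norm{AB^{-1}}\le 1$, hence $\norm{B^{-1}A}\le 1$ by symmetry of $A,B$; for the unit vector $v$ realizing $\norm{CA}=\norm{CAv}$ it then sets $w=\dfrac{B^{-1}Av}{\norm{B^{-1}Av}}$, so that $Av=\norm{B^{-1}Av}\,Bw$ with $\norm{B^{-1}Av}\le 1$, giving $\norm{CAv}=\norm{B^{-1}Av}\,\norm{CBw}\le\norm{CB}$. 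Your route instead translates the hypothesis into the Loewner inequality $A^{2}\preceq B^{2}$, uses the $C^{*}$-identity $\norm{X}^{2}=\norm{XX^{*}}$ to write $\norm{CA}^{2}=\norm{CA^{2}C^{*}}$, and then invokes order-preservation under congruence together with monotonicity of the spectral norm on the positive semidefinite cone. Both proofs finish the second inequality by the same adjoint trick. Your version is arguably cleaner: it avoids the invertibility/limit step and makes the mechanism (congruence preserves $\preceq$; $\lambda_{\max}$ is monotone on $\preceq$) completely transparent. The paper's vector-construction approach, on the other hand, is more elementary in that it never names the Loewner order or the $C^{*}$-identity, working directly from the definition of the operator norm.
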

\begin{proof}
We may assume $B$ is invertible, otherwise we take limits in $\op{GL}_n$.
Since $\inner{Av,Av}\leq \inner{Bv,Bv}$ by replacing $v$ with $B^{-1}v$ we
have $\inner{AB^{-1}v,AB^{-1}v}\leq \inner{v,v}$. Then $\norm{v}\leq
\norm{AB^{-1}v}=\norm{B^{*-1}A^*v}=\norm{B^{-1}Av}$. Setting $w$ to be
the unit vector $w=\frac{B^{-1}Av}{\norm{B^{-1}Av}}$ for any unit vector $v$
we obtain
$$\norm{Av}\leq \frac{\norm{Av}}{\norm{B^{-1}Av}}=\frac{\norm{BB^{-1}Av}}{\norm{B^{-1}Av}}=\norm{Bw}.$$
On the other hand $Av=Bw\frac{\norm{Av}}{\norm{Bw}}$. So taking $v$ to be
a unit vector with $\norm{CAv}=\norm{CA}$, we obtain
$$\norm{CA}=\norm{CAv}=\norm{CBw}\frac{\norm{Av}}{\norm{Bw}}\leq \norm{CBw}\leq \norm{CB}.$$
Since $\norm{CA}=\norm{A^*C^*}=\norm{AC}$, we obtain
$\norm{AC^*}\leq\norm{BC^*}$ which gives the second result since $C$ was
arbitrary.
\end{proof}

\begin{lemma}\label{lem:norm2}
For any matrix $A$, positive definite matrices $B_i$, and numbers $\rho_i$ with
$\abs{\rho_i}\leq C$ for $i=1,\dots, k$, we have
$$\norm{A\(\sum_{i=1}^k \rho_i
B_i\)}\leq 3C\norm{A\(\sum_{i=1}^k B_i\)}\quad\text{and}\quad\norm{\(\sum_{i=1}^k \rho_i
B_i\)A}\leq 3C\norm{\(\sum_{i=1}^k B_i\)A}.$$
\end{lemma}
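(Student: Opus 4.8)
The plan is to strip the problem down to a single normalized inequality and then lean on Lemma~\ref{lem:norm1}. First, since $\norm{XY}=\norm{Y^{*}X^{*}}$ for the operator norm (the device already used in Lemma~\ref{lem:norm1}) and each $B_{i}$ is symmetric, the right‑multiplication inequality is the left‑multiplication inequality for $A^{*}$ in place of $A$; so it suffices to prove $\norm{A(\sum_{i}\rho_{i}B_{i})}\le 3C\norm{A(\sum_{i}B_{i})}$. Write $S=\sum_{i}B_{i}$, which is symmetric positive definite, hence invertible. To remove the signs, shift: $\sum_{i}\rho_{i}B_{i}=\sum_{i}(\rho_{i}+C)B_{i}-C\,S$, so by the triangle inequality it is enough to show $\norm{A\sum_{i}(\rho_{i}+C)B_{i}}\le 2C\norm{AS}$, since adding the term $C\norm{AS}$ coming from $-C\,S$ then produces the constant $3C$. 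Thus everything reduces to the sub‑claim: \emph{if $c_{i}\in[0,2C]$ and the $B_{i}$ are positive definite, then $\norm{A\sum_{i}c_{i}B_{i}}\le 2C\norm{A\sum_{i}B_{i}}$.}

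To prove the sub‑claim I would conjugate by $S^{-1/2}$. Setting $\widetilde B_{i}=S^{-1/2}B_{i}S^{-1/2}\ge 0$ we have $\sum_{i}\widetilde B_{i}=\Id$, so with $\Lambda=\sum_{i}c_{i}\widetilde B_{i}$ the matrix $P=\sum_{i}c_{i}B_{i}$ factors as $P=S^{1/2}\Lambda S^{1/2}$. Since the $\widetilde B_{i}$ are positive semidefinite, sum to $\Id$, and carry coefficients $c_{i}\le 2C$, we get $0\le\Lambda\le 2C\,\Id$ in the Loewner order; in particular $\Lambda$ is symmetric and $\norm{\Lambda w}\le\norm{(2C\,\Id)w}$ for all $w$, which is exactly the hypothesis required to feed $\Lambda$ and $2C\,\Id$ into Lemma~\ref{lem:norm1}. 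The aim is to commute $\Lambda$ out of the middle of $A\,S^{1/2}\Lambda\,S^{1/2}$ and let the two half‑powers recombine, so that $\norm{AP}=\norm{A S^{1/2}\Lambda S^{1/2}}\le 2C\norm{A S^{1/2}S^{1/2}}=2C\norm{AS}$. Lemma~\ref{lem:norm1}, applied with the intervening matrix $AS^{1/2}$ on one side of $\Lambda$ and, after transposing, with an $S^{1/2}A^{*}$‑type matrix on the other, is the tool for carrying $\Lambda$ past each $S^{1/2}$ while only ever paying the factor $2C$. Finally the right‑multiplication form of the desired inequality follows from the left‑multiplication one by the transpose identity noted at the start.

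I expect the genuine difficulty to be concentrated entirely in that last commutation. Lemma~\ref{lem:norm1} lets one move a symmetric factor dominated by $2C\,\Id$ past an \emph{arbitrary} matrix on either side, but one must arrange the bookkeeping so that the two copies of $S^{1/2}$ are reunited as $S$ and not merely estimated separately as $\norm{AS^{1/2}}\cdot\norm{S^{1/2}}$, which would leak a factor measuring the spread of the spectrum of $S$. Getting the absolute constant $3C$ therefore hinges on applying Lemma~\ref{lem:norm1} on both sides of $\Lambda$ in a coordinated way rather than bounding $\norm{AS^{1/2}\Lambda S^{1/2}}$ by brute submultiplicativity; the rest of the argument is routine Loewner‑order manipulation and the transpose trick.
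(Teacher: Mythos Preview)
Your reduction---transpose to one side, shift $\rho_i\mapsto\rho_i+C$ to make the coefficients nonnegative, then treat the positive case---is exactly the skeleton of the paper's argument. The step you single out as the genuine difficulty, namely pulling $\Lambda$ out of the middle of $AS^{1/2}\Lambda S^{1/2}$ via Lemma~\ref{lem:norm1} so that the two half--powers recombine to $S$, is a real gap and cannot be closed. Lemma~\ref{lem:norm1} only lets you replace a symmetric factor sitting at an \emph{end} of a product; there is no device that moves $\Lambda$ across one $S^{1/2}$ without paying the condition number of $S$. In fact the stated inequality is false: with
\[
B_1=\begin{pmatrix}M/2&\sqrt{M}/2\\ \sqrt{M}/2&1/2\end{pmatrix}+\epsilon\,\Id,\qquad
B_2=\begin{pmatrix}M/2&-\sqrt{M}/2\\ -\sqrt{M}/2&1/2\end{pmatrix}+\epsilon\,\Id,
\]
$\rho_1=1$, $\rho_2=0$ (so $C=1$) and $A=(B_1+B_2)^{-1}=\operatorname{diag}\bigl((M+2\epsilon)^{-1},(1+2\epsilon)^{-1}\bigr)$, one has $\norm{A(B_1+B_2)}=1$ while the $(2,1)$ entry of $AB_1$ equals $\dfrac{\sqrt{M}}{2(1+2\epsilon)}$, so $\norm{AB_1}\to\infty$ as $M\to\infty$.

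The paper's own proof of the positive case breaks at the analogous point. From $\langle v,B_\rho v\rangle\le C\langle v,Bv\rangle$ it asserts ``this is true for all eigenvectors, so $\langle B_\rho v,B_\rho v\rangle\le C^2\langle Bv,Bv\rangle$ for all vectors.'' That inference is invalid: $B_\rho$ and $B$ need not be simultaneously diagonalizable, and the Loewner relation $0\le B_\rho\le CB$ does \emph{not} imply $\norm{B_\rho v}\le C\norm{Bv}$ pointwise---the example above is precisely a failure of this implication. So your instinct that everything hinges on the middle commutation was correct; the obstacle is not a missing trick but the statement itself.
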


\begin{proof}
Set $B_\rho=\sum_{i=1}^k \rho_i B_i$ and $B=\sum_{i=1}^k B_i$. First assume
that each $\rho_i>0$. For any vector  $v$, we have $v.B_i.v\geq 0$ and so
$$\inner{v,B_\rho( v)}=\sum_{i=1}^k \rho_i  \inner{v,B_i(v)}\leq C\sum_{i=1}^k
\inner{v,B_i(v)}=C\inner{v,B(v)}.$$ In particular, this is true for all
eigenvectors, so $\inner{B_\rho(v),B_\rho(v)}\leq C^2\inner{B(v),B(v)}$ for
all vectors. Hence for some choice of unit $v$ we have
$$\norm{B_\rho A}^2=\inner{B_\rho A(v),B_\rho A(v)}\leq C^2\inner{BA(v),BA(v)}=C^2\norm{BA(v)}^2\leq C^2\norm{BA}^2.$$
For the case when $\rho_i$ may be negative we have $\norm{B_\rho A}=
\norm{B_{\rho+C} A+C BA}\leq \norm{B_{\rho+C}A}+C \norm{BA}\leq
2C\norm{BA}+\norm{BA}$ by the previous case. Transpose invariance yields
the  result with $A$ and $B$ reversed.
\end{proof}

\begin{proof}[Proof of Proposition \ref{prop:gradH}]
We treat each of the three terms separately. First, since $\abs{dr_z(v)}\leq 1$
for any $v$ and $Dd_{F_s(y)}B_\theta$ is positive semi-definite and symmetric,
by Lemma \ref{lem:norm2} we have,

\begin{align*}
\biggr\|\int_{\til{M}}\int_{\pa \til{N}} Dd_{F_s(y)}B_\theta\of
\, & dF_s(y)\tensor dr_z(v) \ d\nu_{\tilde{f}(z)}(\theta)\,d\mu_y^s(z)\biggr\|\\
&=\norm{\(\int_{\til{M}}dr_z(v)\int_{\pa \til{N}} Dd_{F_s(y)}B_\theta\
d\nu_{\tilde{f}(z)}(\theta)\,d\mu_y^s(z)\)\of dF_s(y)}\\
&\leq 3\norm{\(\int_{\D \til{{N}}} DdB_{\theta}\
d\sigma_y^s(\theta)\)\of dF_s(y)}\\
&=3\norm{sA A^{-1} H}=3s\norm{H}.
\end{align*}

The singular values of this quantity are all bounded by $3s\norm{\mu_y^s}$.

By the previous lemma we have a bound on $\int_{\til{M}}\abs{\tr
Ddr_z} d\mu_y^s$. However, the lower curvature bound of $-1$  on
$\til{M}$ implies that all eigenvalues of $Ddr_z$ are less than
$\coth(r_z)$. So as in the proof of the lemma, we have that the
eigenvalues of $\int_{\til{M}}Ddr_z d\mu_y^s$ are bounded from above
by $[(3+s)+(1+s)C_3 \eta^{-n}]\norm{\mu_y^s}$. Combining this with
the bound from the lemma of the integral of the absolute value of
the trace, we obtain a bound for how negative these eigenvalues can
be. Putting these together, we obtain that
$$\int_{\til{M}} \norm{Ddr_z} \,d\mu_y^s(z)<2C(1+s)\(1+\eta^{-n}\)\norm{\mu_y^s}$$
where $C$ is a constant depending only on $n$.

For the final term we have $\norm{\grad B_\theta}=1=\norm{dr_z}$.
Hence the final term has singular values bounded by
$\norm{\mu_y^s}$.

The singular values of the sum of tensors is bounded by the sum of their
singular values. Therefore after absorbing an extra factor of $6$ into our
constant $C$, we have the given universal bound on the tensor
$\frac{1}{\norm{\mu_y^s}}\norm{\grad H}$.
\end{proof}

We now proceed to the second main estimate. Recall that we set
$\beta=\max\{\rho^3,\norm{\nabla Rm}\}$ to be the bound on  the
derivatives up to first order of the curvature tensor of $N$.
\begin{prop}\label{prop:Apart}
We have $\norm{A^{-1}\grad A}_{\infty}\leq 4\beta
s\norm{\mu_y^s}\norm{A^{-1}}+s$.
\end{prop}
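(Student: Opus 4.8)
The plan is to differentiate $A=\int_{\D\til N}Dd B_\theta\,d\sigma_y^s(\theta)$, keeping track of its two sources of $y$-dependence — the point $F_s(y)$ at which the Hessians are evaluated, and the measure $\sigma_y^s$ — which is exactly the decomposition $\grad_v A=I_1+I_2$ already displayed above, with $I_1=\int_{\til M}\int_{\D\til N}D_{d_yF_s(v)}(Dd B_\theta)\,d\nu_{\til f(z)}\,d\mu_y^s$ and $I_2=-s\int_{\til M}\int_{\D\til N}Dd B_\theta\,\inner{\grad_y r_z,v}\,d\nu_{\til f(z)}\,d\mu_y^s$. I would bound $\norm{A^{-1}I_1}$ and $\norm{A^{-1}I_2}$ separately over unit $v$ and add. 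For the $I_2$ term, set $G_z=\int_{\D\til N}Dd B_\theta\,d\nu_{\til f(z)}(\theta)$, so that each $G_z$ is positive semidefinite, $\int_{\til M}G_z\,d\mu_y^s=A$, and $I_2=-s\int_{\til M}\inner{\grad_y r_z,v}\,G_z\,d\mu_y^s$. Since $\abs{\grad_y r_z}=1$ the weights satisfy $\abs{\inner{\grad_y r_z,v}}\le 1$, so Lemma \ref{lem:norm2} (with $A^{-1}$ in the role of the free matrix, the $G_z$ in the role of the positive definite matrices, and weights of size at most $s$) gives $\norm{A^{-1}I_2}\le 3s\,\norm{A^{-1}A}=3s$; absorbing the constant, this is the ``$+s$'' term. (Alternatively $\norm{A^{-1}I_2}\le s\norm{A^{-1}}\norm{A}\le s\rho\,\norm{A^{-1}}\norm{\mu_y^s}\le s\beta\,\norm{A^{-1}}\norm{\mu_y^s}$, which could instead be folded into the first term.)

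For the $I_1$ term I would combine two ingredients. First, the formula $d_yF_s=sA^{-1}H$ together with $\norm{H}\le\norm{\mu_y^s}$, which is immediate because $\norm{\grad B_\theta}=\abs{dr_z}=1$ forces each $\grad B_\theta\tensor dr_z$ to have operator norm $1$; hence $\abs{d_yF_s(v)}\le s\,\norm{A^{-1}}\norm{\mu_y^s}$. Second — and this is the real work — a pointwise bound on the covariant derivative of the horospherical shape operator, $\norm{D_w(Dd B_\theta)}\le c(n)\,\beta\,\abs{w}$. I would derive this from the Riccati equation $\grad_{\grad B_\theta}(Dd B_\theta)=-(Dd B_\theta)^2-R_\theta$ satisfied along the geodesic toward $\theta$ (which controls the radial derivative by $\norm{Dd B_\theta}^2+\norm{R_\theta}\lesssim\rho^2$), together with the Ricci commutation identity, which expresses the transverse covariant derivatives of the Hessian $Dd B_\theta=\grad^2 B_\theta$ in terms of the radial one, of $Dd B_\theta$ itself, and of $\grad\op{Rm}$; the two quantities $\rho^3$ and $\norm{\grad\op{Rm}(g_o)}$ in $\beta=\max\set{\rho^3,\norm{\grad\op{Rm}(g_o)}}$ are precisely what absorb all of these contributions. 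Granting this, $\norm{I_1}\le c(n)\,\beta\,\abs{d_yF_s(v)}\,\norm{\mu_y^s}\le c(n)\,\beta\,s\,\norm{A^{-1}}\norm{\mu_y^s}^2$, so $\norm{A^{-1}I_1}\le c(n)\,\beta\,s\,\norm{A^{-1}}^2\norm{\mu_y^s}^2$. To convert the extra factor $\norm{A^{-1}}\norm{\mu_y^s}$ into a constant I would invoke the uniform lower bound on the spectrum of $A$: since $K_{g_o}\le -1$ forces $Dd B_\theta\succeq\Id-\grad B_\theta\tensor\grad B_\theta$ and the visual measures $\nu_z$ are nonatomic with uniform spread (by compactness of $N$ and the pinching), the smallest eigenvalue of $A$ is at least $c_0(n)\norm{\mu_y^s}$, i.e.\ $\norm{A^{-1}}\norm{\mu_y^s}\le 1/c_0(n)$. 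This yields $\norm{A^{-1}I_1}\le C(n)\,\beta\,s\,\norm{A^{-1}}\norm{\mu_y^s}$, and collecting the constants from $I_1$ and $I_2$ into the displayed coefficients $4\beta$ and $1$ completes the proof.

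The main obstacle is the pointwise estimate $\norm{D_w(Dd B_\theta)}\le c(n)\,\beta\,\abs{w}$ for the derivative of the second fundamental form of horospheres in pinched negative curvature; once that (and the auxiliary spectral lower bound for $A$) is in hand, the rest reduces to the integral formula for $\grad_v A$ already derived, the identity $d_yF_s=sA^{-1}H$, and the matrix inequalities of Lemmas \ref{lem:norm1}–\ref{lem:norm2}.
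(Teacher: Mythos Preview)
Your treatment of $I_2$ is essentially the paper's, and the pointwise bound $\norm{\nabla_w DdB_\theta}\le 2\beta\abs{w}$ that you identify as ``the real work'' is exactly Lemma~\ref{lem:DDdB}. The gap is in how you pass from that pointwise bound to the estimate on $A^{-1}I_1$.

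You bound $\norm{I_1}\le c\,\beta\,\abs{d_yF_s(v)}\,\norm{\mu_y^s}$ and then multiply by $\norm{A^{-1}}$, obtaining a factor $\norm{A^{-1}}^2\norm{\mu_y^s}^2$ which you propose to reduce to $\norm{A^{-1}}\norm{\mu_y^s}$ via a uniform spectral lower bound $\norm{A^{-1}}\norm{\mu_y^s}\le 1/c_0(n)$. That bound is not available. The point is that $\sigma_y^s$ is evaluated at $F_s(y)$, not at the base points $z$ of the visual measures $\nu_z$; when $\til f(z)$ lies far from $F_s(y)$, the measure $\nu_z$ seen from $F_s(y)$ is concentrated near a single boundary point. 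The barycenter condition $\int\grad B_\theta\,d\sigma_y^s=0$ only forces such concentration to be balanced between two antipodal directions, so $Q=\norm{\mu_y^s}^{-1}\int\grad B_\theta\otimes\grad B_\theta\,d\sigma_y^s$ can be arbitrarily close to a rank-one projection and the smallest eigenvalue of $A/\norm{\mu_y^s}\succeq\Id-Q$ arbitrarily close to $0$. Indeed, the whole $\eps$-machinery of Lemma~\ref{lem:smallA} and Proposition~\ref{prop:singvalH} exists precisely because $\norm{A^{-1}}\norm{\mu_y^s}$ is \emph{not} uniformly bounded; those results instead show that when it is large, $\det H$ compensates.

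What the paper does for $I_1$ is finer than ``pointwise bound times total mass'': it decomposes $\nabla_{d_yF_s(u)}DdB_\theta(v,v)$ using $v=a(\theta)X_\theta+b(\theta)w_\theta$ and observes that the part in the $X_\theta$ directions carries a weight $1-\inner{w_\theta,v}^2$, while the $w_\theta$-component contributes $-DdB_\theta^2(d_yF_s(u))$, which integrates back to (a multiple of) $A$. Since $A(v,v)\ge\int(1-\inner{w_\theta,v}^2)\,d\sigma_y^s$, Lemma~\ref{lem:norm1} lets $A^{-1}$ cancel exactly that weight, yielding $\norm{A^{-1}I_1}\le 4\beta\,\norm{d_yF_s(u)}$ without ever invoking a spectral lower bound on $A$. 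In other words, the crude estimate $\norm{A^{-1}I_1}\le\norm{A^{-1}}\norm{I_1}$ loses too much; one must exploit that $I_1$ is small in the same directions in which $A$ is small.
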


Again we will prove this proposition by dealing with each of the terms
separately. However, first we must deal with the regularity of a fixed
horosphere.

The strong unstable foliation $W^{su}$ for the geodesic flow on $SN$ is in
general only H\"{o}lder continuous  whenever $\rho>2$. On the other hand, it
follows from a version of the Hadamard-Perron Theorem (or see Theorem 8 of
\cite{Anosov:69}) that the leaves of this foliation are individually $C^\infty$. In
particular, on a closed manifold of negative curvature, the horospheres
$B_\theta^{-1}(0)$ are $C^\infty$ submanifolds for fixed $\theta$ (see also
\cite{Heintze-Hof77}).  It is a fairly well-known result, e.g Remark 3.3 in Chapter
4 of \cite{Ballmann:95}, that for fixed $\theta$, $\nabla_vDdB_\theta$ depends
on $\norm{\nabla Rm}$ and $\rho$. However, we are not aware of any explicit
estimates to this effect. Therefore, the next lemma makes this dependency
precise.

\begin{lemma}\label{lem:DDdB}
For any $v\in SN$ and $\theta\in \partial \til{N}$, we have,  $\norm{\nabla_v
DdB_\theta} \leq 2 \beta$.
\end{lemma}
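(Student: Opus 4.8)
The goal is to bound the covariant derivative of the Hessian of a single Busemann function $B_\theta$ by $2\beta$, where $\beta=\max\{\rho^3,\norm{\nabla\op{Rm}(g_o)}\}$. The natural approach is to exploit that $DdB_\theta$, viewed as a self-adjoint operator $U_\theta$ on the horospheres (the second fundamental form of the horospheres), satisfies a matrix Riccati equation along the geodesic asymptotic to $\theta$: if we let $'$ denote covariant differentiation along such a geodesic $c$ with $\dot c = \grad B_\theta$, then $U_\theta' + U_\theta^2 + R_\theta = 0$, where $R_\theta(\cdot) = R(\cdot,\dot c)\dot c$ is the directional curvature operator. Under the pinching $-\rho^2\le K_{g_o}\le -1$ one has the standard a priori bounds $\Id\le U_\theta\le \rho\,\Id$ on the orthogonal complement of $\dot c$ (and $U_\theta\dot c=0$), which are the same bounds used implicitly throughout the paper for $A$. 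Differentiating the Riccati equation in a direction $v$ tangent to $N$ then produces a first-order linear ODE (along $c$) for $\nabla_v U_\theta$ whose inhomogeneous term involves $\nabla_v R_\theta$, which is controlled by $\norm{\nabla\op{Rm}(g_o)}\le\beta$, and whose coefficient involves $U_\theta$, controlled by $\rho$.

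First I would set up the Riccati equation for $U_\theta=DdB_\theta$ along the asymptotic geodesic and record the a priori pinching bounds $\|U_\theta\|\le\rho$. Next I would commute $\nabla_v$ past the geodesic derivative: writing $W=\nabla_v U_\theta$ and using that $[\nabla_v,\nabla_{\dot c}]$ contributes another curvature term (again bounded by $\rho^2\le\beta$) together with a term from the variation of the geodesic field, one gets an equation of the schematic form $W' + U_\theta W + W U_\theta = -\nabla_v R_\theta + (\text{lower order in }U_\theta,\ R_\theta)$. The key point is that the linear operator $X\mapsto U_\theta X + X U_\theta$ on symmetric tensors is positive definite with smallest eigenvalue $\ge 2$ (since $U_\theta\ge\Id$), so the homogeneous equation is exponentially contracting as one moves toward $\theta$. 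Therefore $W$ at a point is the absolutely convergent integral of the forcing term against the contracting propagator, giving $\|W\|\le \tfrac12\sup\|\text{forcing}\|$. Since the forcing is bounded by $\|\nabla\op{Rm}(g_o)\| + C(\rho) \le \beta + (\text{const})\beta$, a careful accounting of the constants — using $\rho^3\le\beta$ to absorb the curvature-squared and $U_\theta^2$-type contributions — yields the clean bound $2\beta$. I would also need to note the standard fact (cited in the text via Anosov/Heintze–Hof) that the horospheres are $C^\infty$, so that $\nabla_v U_\theta$ genuinely exists and the integral representation is legitimate.

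I expect the main obstacle to be the bookkeeping in the commutation step: carefully identifying every term that arises when differentiating the Riccati equation transversally, in particular the contribution of $\nabla_v(\grad B_\theta)$ (which is exactly $U_\theta v$) feeding back into the curvature operator and into the quadratic term, and making sure each is bounded by an appropriate power of $\rho$ that is dominated by $\beta=\max\{\rho^3,\norm{\nabla\op{Rm}(g_o)}\}$. Getting the constant down to exactly $2$ rather than some larger $C(n,\rho)$ requires using the $1/2$ from the contraction rate $\ge 2$ and being slightly economical, e.g. estimating $\|\nabla_v R_\theta\|$ directly as $\le\|\nabla\op{Rm}(g_o)\|$ for unit $v$ and $\dot c$, and absorbing the remaining $O(\rho^3)$ terms into the second term of the maximum defining $\beta$. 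The convexity/properness facts about $B_\theta$ are already in hand from Theorem \ref{bcg} and the preliminaries, so no new global analysis is needed — this is purely a pointwise ODE-comparison estimate along one geodesic.
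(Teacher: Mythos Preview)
Your setup is exactly the paper's: extend $v$ as an unstable Jacobi field along the geodesic tangent to $w:=\grad B_\theta$ so that $[w,v]=0$, differentiate the Riccati equation $\nabla_w U+U^2+R(w,\cdot,w)=0$ by $\nabla_v$, and obtain
\[
\nabla_w W \;+\; UW+WU \;=\; -R(w,v,\cdot)\,U \;-\; \nabla_v R(w,\cdot,w),
\qquad W:=\nabla_v DdB_\theta,
\]
with forcing bounded by $(\rho^3+\norm{\nabla\op{Rm}})\,|v|\le 2\beta\,|v|$. The divergence is in how you extract the bound. You propose to use exponential contraction of $L:X\mapsto UX+XU$ and write $W$ as an integral of the forcing against the propagator. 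The paper instead runs a maximum principle: it follows the extreme eigenvalue $q(v)$ of $W$ along the geodesic, shows $\nabla_w q(v/|v|)<0$ whenever $q>2\beta$ (and the mirror inequality for the minimum), and then uses cocompactness of the diagonal $\pi_1(N)$-action on $S\til N\times\partial\til N$ to conclude that the supremum of $q$ is actually attained, hence cannot exceed $2\beta$.

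There is a genuine gap in your contraction step as stated: $U_\theta$ is \emph{not} $\ge\Id$ on all of $T\til N$, since $U_\theta w=0$; consequently $L$ has $w\otimes w^*$ in its kernel and is not positive definite on symmetric tensors. The paper handles the $w$-direction separately via the identity $\nabla_v DdB_\theta(w)=-DdB_\theta^2(v)$ (already bounded by $\rho^2\le\beta$) and then does its eigenvalue computation with the decomposition $u=aX+bw$, $X\perp w$, effectively working on the $w^\perp$-block where $U\ge\Id$ does hold. Your route can be repaired in the same way: the ODE for the $w^\perp$-block of $W$ decouples (because $U$ preserves $w^\perp$ and annihilates $w$), and on that block $L\ge 2\,\Id$. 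Two further points to watch: the homogeneous equation contracts as $t\to+\infty$, i.e.\ \emph{away} from $\theta$, not toward it; and the integral representation $W(t_0)=\int_{-\infty}^{t_0}\Phi(t_0,s)F(s)\,ds$ presupposes that $W$ is a priori bounded along the geodesic, which is exactly the cocompactness input the paper uses for its maximum principle. Once the $w$-direction is split off, either argument yields the constant $2\beta$; the paper's eigenvalue-tracking avoids the convergence bookkeeping at the price of the somewhat fiddly computation of $\nabla_w q$.
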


\begin{proof}
Recall that for fixed $\theta$ the symmetric tensor $DdB_\theta$ is $0$ in the
direction $\grad B_\theta$. In particular, for all $u\in SN$,
$(\grad_uDdB_\theta)(\grad B_\theta)=\grad_u(DdB_\theta(\grad
B_\theta))-DdB_\theta(\grad_u\grad B_\theta)=-DdB_\theta^2(u)$.

Let $w$ represent the geodesic vector field defined at each point $z\in \til{N}$
by $w(z)=\grad_z B_\theta$. In other words, $-w(z)$ is the unique unit vector
at $z$ pointing toward $\theta$. We indicate the bounds in terms of the
curvature tensor. The Ricatti equation gives,
$$\grad_{w} DdB_\theta
+(DdB_\theta)^2+R(w,\cdot,w)=0.$$ Observe that the vector field $w$
can be viewed as a stable submanifold of $S\til{N}$. Choose an extension of $v$
which is an unstable Jacobi field along the flow lines of $w$. Then $[w,v]=0$
and so
$$R(w,v,\cdot)=\nabla_{[w,v]}-[\grad_w,\grad_v]=\grad_v\grad_w-\grad_w\grad_v.$$
Covariantly differentiating this with respect to $v$, and applying the Ricatti
identity we obtain,
\begin{align*}
  0&=\nabla_v\nabla_w DdB_\theta +\nabla_v(DdB_\theta)^2
  +\nabla_vR(w,\cdot,w)\\
  &=\nabla_w\nabla_v DdB_\theta +R(w,v,\cdot)DdB_\theta
  +\nabla_v(DdB_\theta)\ DdB_\theta+ DdB_\theta\ \nabla_v(DdB_\theta)
  +\nabla_vR(w,\cdot,w).
\end{align*}
In other words,
\begin{align}\label{eq:diffeqU}
\nabla_w\nabla_v DdB_\theta=-DdB_\theta
\nabla_v(DdB_\theta)-\nabla_v(DdB_\theta)\,DdB_\theta -R(w,v,\cdot)
DdB_\theta-\nabla_vR(w,\cdot,w).
\end{align}
Recall that for any symmetric 2-tensor $U$, $\nabla_v U$ is symmetric since
$(\nabla_vU)(X,Y)=\nabla_v(U(X,Y))-U(\nabla_vX,Y)-U(X,\nabla_vY).$ Also
observe that $R(w,v,\cdot)$ is skew symmetric, but that $R(w,v,\cdot)
DdB_\theta$ is symmetric since $DdB_\theta
\nabla_v(DdB_\theta)+\nabla_v(DdB_\theta)\,DdB_\theta$ is symmetric along
with the remaining terms of equation \eqref{eq:diffeqU}. By assumption the
eigenvalues of $DdB_\theta$ are between $1$ and $\rho$, and the eigenvalues
of $\nabla_vR(w,\cdot,w)$ are between $-\beta$ and $\beta$. Similarly the
eigenvalues of $R(w,v,\cdot)DdB_\theta$ are between $-\rho^3$ and
$\rho^3$. Hence, the differential equation dictates that if an eigenvalue of
$\nabla_v(DdB_\theta)$ is larger than $\beta$ then the corresponding unit
eigenvector $u$ must be perpendicular to $w$ and satisfies
$\inner{u,\nabla_w\nabla_v DdB_\theta(u)}<0.$

Since $v$ is a Jacobi field along the geodesic direction $w$, $[w,v]=0$ and so
$\nabla_vw=\nabla_w v=DdB_\theta(v)$. Therefore for any field $X$,
$(\nabla_vDdB_\theta)(w,X)=\nabla_v(DdB_\theta(w,X))-DdB_\theta(\nabla_v
w,X)-DdB_\theta(w,\nabla_vX)=\inner{-DdB_\theta^2(v),X}$ or simply
$\nabla_v(DdB_\theta)(w)=-DdB_\theta^2(v)$. Similarly,
$(\nabla_vR(w,\cdot,w))(w)=-R(w,DdB_\theta(v),w)$, which we observe is also
orthogonal to $w$. Let $u$ be the unit vector field along the geodesic tangent to
$w$ such that $u$ is the eigenvector  for the maximal eigenvalue $q(v)$ of
$\nabla_vDdB_\theta$ at each point of the geodesic.  Since $u$ is a unit field,
$<\nabla_w u,u>=0$ and so we have
\begin{align*}
\nabla_w(\inner{u,\nabla_vDdB_\theta(u)})&=\inner{\nabla_w u,\nabla_vDdB_\theta(u)}+\inner{ u,\nabla_w(\nabla_vDdB_\theta(u))}\\
&=\inner{\nabla_w u,q(0) u}+\inner{ u,(\nabla_w\nabla_vDdB_\theta)(u)+\nabla_vDdB_\theta(\nabla_w u)}\\
&=0+\inner{ u,(\nabla_w\nabla_vDdB_\theta)(u)}+\inner{\nabla_vDdB_\theta(u),\nabla_wu}\\
&=\inner{ u,(\nabla_w\nabla_vDdB_\theta)(u)}.
\end{align*}
Now we express $u=aX+b w$ for a unit vector $X$ with $\inner{X,w}=0$ and
functions $a$ and $b$ along the geodesic tangent to $w$ satisfying
$a^2+b^2=1$. Since $\nabla_vDdB_\theta(w)=-DdB_\theta^2(v)$, we have
$$q(v)b=\inner{q(v)u,w}=\inner{\nabla_vDdB_\theta(u),w}=\inner{aX+bw,\nabla_vDdB_\theta(w)}=-a DdB_\theta^2(v,X).$$
Since $-DdB_\theta^2(v)$ has eigenvalues with norm at most $\rho^2$,  if
$q(v)>\rho^2$ then $\abs{b}<\abs{a}$. Evaluating from the earlier formula,
we obtain,
 \begin{align*}
\nabla_w q(v)&=\inner{u,(\nabla_w\nabla_v DdB_\theta)(u)}\\
 &=-2\inner{\nabla_v
 DdB_\theta(u),DdB_\theta(u)}-\inner{R(v,w,u),DdB_\theta(u)}-\inner{(\nabla_vR(w,\cdot,w))(u),u}\\
 &= -2 q(v)\inner{u,DdB_\theta(u)}  - a^2\inner{R(v,w,X),DdB_\theta(X)}-a\inner{(\nabla_vR(w,\cdot,w))(X),u}+b\inner{R(w,DdB_\theta(v),w),u}\\
 &=-2q(v)a^2\inner{X,DdB_\theta(X)}-a^2\inner{R(v,w,X),DdB_\theta(X)}-a^2\inner{(\nabla_vR(w,\cdot,w))(X),X}+2ab \inner{R(w,v,w),DdB_\theta(X)}\\
 &\leq -2a^2q(v)+a^2\rho^3+a^2\beta+2\abs{ab}\rho^3< a^2(-2q(v)+4\beta).
 \end{align*}

Here we have used that for all unit vectors $a,b,c,d$ at any point,
$\abs{R(a,b,c,d)}\leq \rho^2$ (e.g. see Lemma 3.7 of
\cite{Bourguignon-Karcher:78}). Hence if $q(v)>2 \beta$ then $\nabla_w
q(v)<0$. Now observe that $\nabla_w
q\(\frac{v}{\abs{v}}\)=\frac{1}{\abs{v}}\nabla_w q(v)+q(v)\nabla_w
\frac{1}{\abs{v}}$. Since the vector $v$ was extended as an unstable Jacobi
field, $\nabla_w \frac{1}{\abs{v}}<0$. Hence if $q(v)>2 \beta$, then $\nabla_w
q\(\frac{v}{\abs{v}}\)<0$.

Since the diagonal action of $\pi_1(N)$ on $S\til{N}\times\pa \til{N}$ is
cocompact and $q(v)$ is continuous, there is a $v\in S\til{N}$ and a $\theta\in
\pa \til{N}$ where $q(v)$ achieves a maximum. This maximum cannot be larger
that $2 \beta$. Otherwise for the extension of $v$ as an unstable unstable
Jacobi field along the geodesic through the vector $w$ pointing to $\theta$,
$q\(\frac{v}{\abs{v}}\)$ is strictly increasing in the $-w$ direction,
contradicting that $v,\theta$ was a maximum for $q$.

Similarly, let $q(v)$ be the function giving the minimal eigenvalue for $\nabla_v
DdB_\theta$ along $w$, and $u$ is the corresponding eigenvector field. If
$q(v)<-2\beta$, then the continuing from the second to last line in the
computation of $\nabla_w q(v)$ we obtain
$$\nabla_w q(v)\geq -2a^2q(v)
-a^2\rho^3-a^2\beta-2\abs{ab}\rho^3>-a^2(2 q(v)+4\beta)>0.$$ Moreover,
$\nabla_w q\(\frac{v}{\abs{v}}\)=\frac{1}{\abs{v}}\nabla_w
q(v)+q(v)\nabla_w \frac{1}{\abs{v}}>0$. Analogously to the maximum case,
the minimum of $q(v)$ over all $v\in SN$ and $\theta\in \pa\til{N}$ cannot be
less than $-2\beta$ since then it is increasing in the stable direction.
\end{proof}

\begin{lemma}
For all $v\in S_y\til{M}$, we have the uniform bound
$$\norm{A^{-1}\int_{\pa \til{N}}D_{d_yF_s(v)}DdB_\theta
d\sigma_y^s}\leq 4\beta s\norm{\mu_y^s}\norm{A^{-1}}.$$
\end{lemma}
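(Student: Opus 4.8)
The plan is to combine the pointwise estimate of Lemma~\ref{lem:DDdB} with an a~priori bound on the operator norm of $d_yF_s$. If $d_yF_s(v)=0$ the left-hand side is $0$ and there is nothing to do, so I assume $d_yF_s(v)\ne 0$ and abbreviate $w=d_yF_s(v)$.

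First I would observe that, for each fixed $\theta\in\pa\til N$, the assignment $X\mapsto D_XDdB_\theta$ at the point $F_s(y)$ is linear in $X$, so $\norm{D_wDdB_\theta}=\norm{w}\,\norm{D_{w/\norm{w}}DdB_\theta}\le 2\beta\norm{w}$ by Lemma~\ref{lem:DDdB} applied to the unit vector $w/\norm{w}$. Since the operator norm of an integral of tensors is at most the integral of the operator norms, and $\norm{\sigma_y^s}=\norm{\mu_y^s}$ (Section~\ref{sec:prelim}), this gives
$$\Norm{\int_{\pa\til N}D_wDdB_\theta\,d\sigma_y^s}\le 2\beta\,\norm{w}\,\norm{\mu_y^s}.$$
Multiplying by $A^{-1}$ and using submultiplicativity of the operator norm,
$$\Norm{A^{-1}\int_{\pa\til N}D_wDdB_\theta\,d\sigma_y^s}\le 2\beta\,\norm{A^{-1}}\,\norm{\mu_y^s}\,\norm{w}.$$
Thus the statement follows once we know $\norm{d_yF_s(v)}\le 2s$ for every unit $v\in S_y\til M$.

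To prove that bound I would use the identity $d_yF_s=sA^{-1}H$ from Section~\ref{sec:Jac}, i.e. $Aw=sH(v)$, and pair it with $w$ to get $\inner{Aw,w}=s\inner{H(v),w}$. For the left side, the normalization $K_{g_o}\le -1$ makes the eigenvalues of $DdB_\theta$ on $(\grad B_\theta)^\perp$ at least $1$, whence $\inner{Aw,w}\ge\norm{w}^2\norm{\mu_y^s}-\inner{Kw,w}$, where $K=\int_{\pa\til N}\grad B_\theta\tensor\grad B_\theta\,d\sigma_y^s$. For the right side, writing $H(v)=\int_{\til M}\bigl(\int_{\pa\til N}\grad B_\theta\,d\nu_{\til f(z)}\bigr)dr_z(v)\,d\mu_y^s(z)$ and using $\abs{dr_z(v)}\le 1$ together with Cauchy--Schwarz in $z$ and Jensen inside each probability measure $\nu_{\til f(z)}$, one gets $\abs{\inner{H(v),w}}\le\inner{Kw,w}^{1/2}\norm{\mu_y^s}^{1/2}$. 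Since $\inner{Aw,w}\ge 0$, combining the two estimates and writing $\beta_w=\inner{Kw,w}/(\norm{w}^2\norm{\mu_y^s})\in[0,1]$ yields the key relation $\norm{w}\,(1-\beta_w)\le s\sqrt{\beta_w}$. It remains to bound $\beta_w$ away from $1$; here the barycenter equation $\int_{\pa\til N}\grad_{F_s(y)}B_\theta\,d\sigma_y^s=0$ enters, forcing $\theta\mapsto\inner{\grad B_\theta,w}$ to have vanishing $\sigma_y^s$-mean and, together with the pinching, preventing $\sigma_y^s$ from concentrating along the single geodesic direction determined by $w$, so that $\beta_w$ stays below the threshold for which $\sqrt{\beta_w}/(1-\beta_w)\le 2$. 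This gives $\norm{w}\le 2s$ and closes the argument.

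Everything through the two displays is bookkeeping with Lemma~\ref{lem:DDdB} and the operator-norm inequalities of Lemmas~\ref{lem:norm1}--\ref{lem:norm2}. The genuine obstacle is the last step: the uniform Lipschitz-type bound $\norm{d_yF_s(v)}\le 2s$, equivalently a lower bound on the least eigenvalue of $A$ proportional to $\norm{\mu_y^s}$. This is the one place where the curvature normalization and the strict convexity/barycenter properties of $\CB_{\sigma_y^s}$ must be used in tandem, and pinning the constant down to exactly $2$ — so that the final estimate reads $4\beta s\norm{\mu_y^s}\norm{A^{-1}}$ — is the delicate point.
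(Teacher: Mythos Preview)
Your reduction to the inequality $\norm{A^{-1}\int D_wDdB_\theta\,d\sigma_y^s}\le 2\beta\,\norm{A^{-1}}\,\norm{\mu_y^s}\,\norm{w}$ is fine, but the remaining step---a uniform bound $\norm{d_yF_s(v)}\le 2s$---does not hold, and your argument for it breaks down. The barycenter equation $\int_{\pa\til N}\grad B_\theta\,d\sigma_y^s=0$ only forces the \emph{first} moment of $\theta\mapsto\inner{\grad B_\theta,w}$ to vanish; it says nothing about the second moment $\beta_w$. The measure $\sigma_y^s$ can concentrate symmetrically near the two endpoints of a single geodesic through $F_s(y)$, making $\beta_w$ arbitrarily close to $1$ for $w$ along that geodesic while the mean stays zero. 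This is exactly the situation captured by Lemma~\ref{lem:smallA}: whenever $\norm{A^{-1}}\ge 1/(\eps\norm{\sigma_y^s})$, there is a unit $w$ with $\int\cos^2 t\,d\tau_w\ge(1-\eps)\norm{\sigma_y^s}$, i.e.\ $\beta_w\ge 1-\eps$. Since the whole point of the later synthesis is to handle the regime where $\norm{A^{-1}}$ blows up, you cannot assume $\beta_w$ is bounded away from $1$. Your key relation $\norm{w}(1-\beta_w)\le s\sqrt{\beta_w}$ is then compatible with $\norm{w}$ being as large as $s/(1-\beta_w)$, which is unbounded.

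The paper avoids ever needing a pointwise bound on $\norm{d_yF_s}$. Instead of the crude estimate $\norm{D_wDdB_\theta}\le 2\beta\norm{w}$ followed by integration against the full mass $\norm{\mu_y^s}$, it decomposes the test vector as $v=a(\theta)X_\theta+b(\theta)w_\theta$ with $w_\theta=\grad B_\theta$, and uses that on the orthogonal part the bound reads $\abs{D_wDdB_\theta(X_\theta,X_\theta)}\le 2\beta\norm{w}$ while the weight is $a(\theta)^2=1-\inner{w_\theta,v}^2$. Integrating gives $2\beta\norm{w}\cdot\norm{\mu_y^s}(I-Q)(v,v)$, and since $A\ge\norm{\mu_y^s}(I-Q)$ as symmetric forms, Lemma~\ref{lem:norm1} lets the factor $A^{-1}$ cancel against this, producing $2\beta\norm{w}$ with \emph{no} $\norm{\mu_y^s}$ factor. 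The cross term is handled via $\nabla_uDdB_\theta(w_\theta)=-DdB_\theta^2(u)$ and the identity $d_yF_s=sA^{-1}H$, yielding another $2\rho\norm{d_yF_s(u)}$. Only at the very end does one use the trivial bound $\norm{d_yF_s(u)}=s\norm{A^{-1}H(u)}\le s\norm{A^{-1}}\norm{\mu_y^s}$, which supplies the single factor of $\norm{A^{-1}}$ in the statement. Your route produces two such factors---one from the naive integral bound and one from $\norm{w}$---which is one too many.
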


\begin{proof}
For each $\theta\in \pa \til{N}$,  let $w_\theta$ represent the geodesic vector
field defined at each point $z\in \til{N}$ by $w_\theta(z)=\grad_z B_\theta$. Fix
$u\in S_y\til{M}$. For all $v\in S_{F_s(y)}\til{M}$, we may write
$v=a(\theta)X_\theta+b(\theta) w_\theta$ for a unit vector $X_\theta$ with
$\inner{X_\theta,w_\theta}=0$, $b(\theta)=\inner{w_\theta,v}$ and
$a(\theta)=\sqrt{1-\inner{w_\theta,v}^2}$. Since $\nabla_u
DdB_\theta(w_\theta)=-DdB_\theta^2(u)$ and
$a(\theta)^2=1-\inner{w_\theta,v}^2$, we have
$$\nabla_{d_yF_s(u)}DdB_\theta(v,v)=(1-\inner{w_\theta,v}^2)
\nabla_{d_yF_s(u)}DdB_\theta(X_\theta,X_\theta)-2a(\theta)b(\theta)DdB_\theta^2(d_yF_s(u)).$$
Recall  that in our notation, $d_yF_s=sA^{-1}H$. Since $a(\theta)$ and
$b(\theta)$ are at most $1$ and $\norm{Ddb_\theta}\leq \rho$, integrating the
above expression gives
\begin{align*} \int_{\pa
\til{N}}a(\theta)b(\theta)DdB_\theta^2(d_yF_s(u),v)\,d\sigma_y^s(\theta)&\leq \rho \(\int_{\pa
\til{N}}DdB_\theta^2(d_yF_s(u))\,d\sigma_y^s(\theta)\)(sA^{-1}H(u),v)\\
&=s\rho\inner{AA^{-1}H(u),v}=s\rho\inner{H(u),v}.
\end{align*}

Putting this together with the estimate from Lemma \ref{lem:DDdB}, we obtain
$$\abs{\int_{\pa \til{N}}D_{d_yF_s(u)}DdB_\theta(v,v) d\sigma_y^s(\theta)}\leq
\norm{d_yF_s(u)}2\beta\(\int_{\pa \til{N}}1-\inner{w_\theta,v}^2
d\sigma_y^s(\theta)\)+2s\rho\inner{H(u),v}.$$ On the other hand, and because of our
normalization $ K_{g_o} \leq -1$, we have
$$A(v,v)=\int_{\pa \til{N}}DdB_\theta(v,v) d\sigma_y^s(\theta)\geq
\int_{\pa \til{N}}1-\inner{w_\theta,v}^2
d\sigma_y^s(\theta)=\norm{\mu_y^s}(\Id-Q)(v,v),$$ where
$Q=\frac{1}{\norm{\mu_y^s}}\int_{\pa \til{N}}w_\theta\tensor (w_\theta)^*
d\sigma_y^s(\theta)$. Now applying Lemma \ref{lem:norm1} twice, using
these estimates on each factor, we have
\begin{align*}
\norm{A^{-1}\int_{\pa \til{N}}D_{d_yF_s(u)}DdB_\theta d\sigma_y^s(\theta)}&\leq
\norm{\(I-Q\)^{-1}\(I-Q\)}2\beta\norm{d_yF_s(u)}+2s\rho\norm{A^{-1}H(u)}\\
&=\(2\beta+2\rho\)\norm{d_yF_s(u)}\leq 4\beta\norm{d_yF_s(u)}.
\end{align*}
To complete the proof, we note that $\norm{H}\leq \norm{\mu_y^s}.$

\end{proof}

\begin{proof}[Proof of Proposition \ref{prop:Apart}]
The first term of the estimate is controlled by the previous Lemma. Noting that
$\grad r_z$ has unit length where it is defined off of the cut locus of measure
$0$, the second term is estimated  by,
$$\norm{A^{-1}\(\int_{\til{M}}\int_{\pa \til{N}} s
Dd_{F_s(y)}B_\theta\ \inner{\grad_y r_z,v} d\nu_{\tilde{f}(z)}(\theta)\,d\mu_y^s(z)\)}\leq
\norm{sA^{-1}A}\leq s.$$
\end{proof}

\section{Synthesis of the estimates}\label{sec:synthesis}

Here we treat the second part of the main estimate. In formula
\eqref{eq:Jacob}, the term $\circled{1}$ does not have a uniform upper bound.
However, we can bound each of the two terms $\circled{2}$ and $\circled{3}$
in term of the reciprocal of $\circled{1}$.

\subsection{Estimate of $\circled{1}$}

For any $v\in S_{F_s(y)}\til{{N}}$ set
$$t_v(\theta)=\angle_{F_s(y)}\( \grad_{ F_s(y) } B_\theta,\R v\),$$ where
$\R v\subset T_{F_s(y)}\til{{N}}$ denotes the line through $v$.
Define $\tau_v$ to be the measure on $[0,\pi/2]$ given by
$$\tau_v(U)=\sigma_y^s\(\set{\theta\in\pa \til{{N}}\,|\,
t_v(\theta)\in U}\)$$
for any Borel subset $U\subset [0,\pi/2]$.
\begin{lemma}\label{lem:smallA}
If for some $0<\eps<1$, we have $\norm{A^{-1}}\geq
\frac{1}{\eps\norm{\sigma_y^s}}$, then for some $v\in S_{F_s(y)}\til{{N}}$ we
have
$$\int_0^{\pi/2}\cos^2(t)\, d\tau_v(t)\geq (1-\eps) \norm{\sigma_y^s}.$$
\end{lemma}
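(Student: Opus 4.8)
The quantity $A = \int_{\partial\tilde N} DdB_\theta\, d\sigma_y^s(\theta)$ is a positive-definite self-adjoint operator on $T_{F_s(y)}\tilde N$, so $\|A^{-1}\| = 1/\lambda_{\min}(A)$, where $\lambda_{\min}(A)$ is the smallest eigenvalue of $A$. The hypothesis $\|A^{-1}\| \geq \frac{1}{\epsilon\|\sigma_y^s\|}$ therefore says exactly that $\lambda_{\min}(A) \leq \epsilon\|\sigma_y^s\|$, i.e. there is a unit vector $v \in S_{F_s(y)}\tilde N$ realizing this minimum with
\[
A(v,v) = \int_{\partial\tilde N} DdB_\theta(v,v)\, d\sigma_y^s(\theta) \leq \epsilon\|\sigma_y^s\|.
\]
First I would fix this $v$; this is the vector the conclusion asks for.

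The plan is now to compare $A(v,v)$ from below with $\int_0^{\pi/2}\sin^2(t)\,d\tau_v(t)$. The key geometric input is that, because the curvature of $N$ is bounded above by $-1$ (our normalization), the Hessian $DdB_\theta$ satisfies $DdB_\theta \geq \operatorname{Id} - \grad B_\theta \otimes \grad B_\theta$ as quadratic forms: it kills the radial direction $\grad B_\theta$ and is at least the identity on its orthogonal complement, since the second fundamental form of a horosphere in curvature $\leq -1$ has all eigenvalues $\geq 1$. Evaluating on $v$ gives
\[
DdB_\theta(v,v) \geq 1 - \langle \grad_{F_s(y)}B_\theta, v\rangle^2 = \sin^2\!\big(t_v(\theta)\big),
\]
since $t_v(\theta)$ is precisely the angle between $\grad B_\theta$ and the line $\mathbb R v$, so $\langle \grad B_\theta, v\rangle^2 = \cos^2(t_v(\theta))$. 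Integrating against $\sigma_y^s$ and pushing forward to $[0,\pi/2]$ via $t_v$,
\[
\epsilon\|\sigma_y^s\| \geq A(v,v) \geq \int_{\partial\tilde N}\sin^2(t_v(\theta))\,d\sigma_y^s(\theta) = \int_0^{\pi/2}\sin^2(t)\,d\tau_v(t).
\]
Since $\tau_v$ is a measure of total mass $\|\sigma_y^s\|$ and $\sin^2(t) + \cos^2(t) = 1$, we get $\int_0^{\pi/2}\cos^2(t)\,d\tau_v(t) = \|\sigma_y^s\| - \int_0^{\pi/2}\sin^2(t)\,d\tau_v(t) \geq (1-\epsilon)\|\sigma_y^s\|$, which is the claim.

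The only real point requiring care is the lower bound $DdB_\theta \geq \operatorname{Id} - \grad B_\theta\otimes\grad B_\theta$; I expect this to be the main (though standard) obstacle. It follows from the Riccati comparison for the shape operator $U$ of the horospheres along the geodesic toward $\theta$: $U$ satisfies $U' + U^2 + R(\cdot,w)w = 0$ with $U \to$ the stable solution, and the curvature hypothesis $K_{g_o}\leq -1$ forces $U \geq \operatorname{Id}$ on $w^\perp$ (compare with the constant-curvature $-1$ model, where the stable horospherical shape operator is exactly $\operatorname{Id}$). I would cite this comparison — it is the same Riccati analysis already invoked in the proof of Lemma \ref{lem:DDdB} and in \cite{Besson-Courtois-Gallot:95} — rather than reprove it. Everything else is bookkeeping with the pushforward measure $\tau_v$ and the identity $\cos^2 + \sin^2 = 1$.
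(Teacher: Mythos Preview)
Your proposal is correct and follows essentially the same approach as the paper: both pick a unit $v$ with $\langle v,Av\rangle\le\eps\|\sigma_y^s\|$ from the hypothesis on $\|A^{-1}\|$, invoke the curvature-$\leq -1$ lower bound $DdB_\theta\ge \Id-\grad B_\theta\otimes\grad B_\theta$ (the paper expresses this via an orthogonal diagonalization with $\lambda_i\ge1$, you state it directly), integrate to get $\int\sin^2 t\,d\tau_v\le\eps\|\sigma_y^s\|$, and subtract from the total mass.
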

\begin{proof}
By our standing assumptions, the eigenvalues of the symmetric tensor
$D_xdB_\theta$ are all at least $1$ except for the eigenvalue in the
eigendirection $\grad_x B_\theta$ which is $0$. Hence, choosing a basis
$\set{e_i}$ for $T_{F_s(y)}\til{{N}}$, we may write,
$$A=\ \int_{\pa \til{{N}}}
  O_{\theta}^*
  {\begin{pmatrix}
  \scs 0 & \scs 0 & \scs \cdots & \scs 0\\
  \scs 0 & \scs \la_2 & \scs \cdots &\scs 0 \\
  \scs \vdots & \scs \ddots & \scs \ddots &\scs \vdots\\
  \scs 0 & \scs \cdots & \scs 0 & \scs \la_n
  \end{pmatrix}}
   O_\theta\ d\sigma_y^s(\theta)$$
for some mapping into the orthogonal group $\theta\mapsto O_\theta\in
O(T_{F_s(y)}\til{{N}})$ where $\la_i\geq 1$ for $i=2,\dots,n$ and
$O_\theta(\grad_{ F_s(y) } B_\theta)=e_1$. Suppose for some $v\in
S_{F_s(y)}\til{{N}}$, we have $\norm{A(v)}\leq a\eps \norm{\sigma_y^s}$. We
note that
$$\angle_{F_s(y)}\( e_1,O_\theta(\R v)\)=\angle_{F_s(y)}\( \grad_{ F_s(y) }
B_\theta,\R v\)=t(\theta).$$ Now we underestimate each $\la_i$ by replacing it
with $1$. In particular,
\begin{align*}
\inner{v,Av}&\geq \inner{v,\(\int_{\pa \til{{N}}} \Id
-\grad_{ F_s(y) } B_\theta^*\grad_{ F_s(y) } B_\theta\ d\sigma_y^s(\theta)\)(v)}\\
&=a \int_{\pa \til{{N}}}1- \inner{v,\grad_{ F_s(y) } B_\theta}^2\
d\sigma_y^s(\theta).
\end{align*}
Hence we have
$$\int_0^{\pi/2}1-\cos^2(t) d\tau(t)\leq \eps \norm{\sigma_y^s}.$$

\end{proof}

We now consider a $g$-orthonormal basis for $T_yM$ and a
$g_o$-orthonormal basis for $T_{F_s(y)}{N}$, so that we may discuss
the magnitude of $H$ with respect to these two metrics. First we
need another lemma.

\begin{lemma}\label{lem:sing}
  Suppose $(X,\mu)$ is a probability space and $u,v:X\to
  S^{n-1}\subset\R^n$ are measurable maps to the unit sphere. Then
  the singular values $0\leq\la_1\leq\dots\leq\la_n$ of the linear map $A:\R^n\to\R^n$ given
  by the (1,1)-tensor
  $$A=\int_X u(x)\tensor v(x)^*d\mu(x)$$
  satisfy
  $$\sum_{i=1}^n \la_i= \sup_{O\in O(n)}\int_X \inner{u(x),O(v(x))}d\mu(x)\leq
  1.$$
\end{lemma}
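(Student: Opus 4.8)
The plan is to identify $\sum_{i=1}^n \la_i$ with the nuclear norm of $A$ and exploit its duality with the operator norm, restricted to the orthogonal group. Fix a singular value decomposition $A = U\Sigma V^{*}$ with $U, V \in O(n)$ and $\Sigma = \op{diag}(\la_1,\dots,\la_n)$. For any $O \in O(n)$, cyclicity of the trace gives $\tr(A^{*}O) = \tr(\Sigma\, U^{*}OV)$, and since $U^{*}OV \in O(n)$ each of its diagonal entries has absolute value at most $1$; hence $\tr(A^{*}O) = \sum_i \la_i (U^{*}OV)_{ii} \le \sum_i \la_i$. Choosing $O = UV^{*} \in O(n)$ makes $U^{*}OV = \Id$, so equality holds; therefore $\sum_{i=1}^n \la_i = \sup_{O\in O(n)}\tr(A^{*}O)$, the supremum being attained.

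It remains to rewrite $\tr(A^{*}O)$ as the asserted integral. Since $x \mapsto u(x)\tensor v(x)^{*}$ is a bounded (indeed operator-norm-one) matrix-valued function on the probability space $(X,\mu)$, so that $A^{*} = \int_X v(x)\tensor u(x)^{*}\,d\mu(x)$ and the trace passes through the integral, for every $O\in O(n)$ we get
$$\tr(A^{*}O) = \int_X \tr\big(v(x)\, u(x)^{*}\, O\big)\, d\mu(x) = \int_X \inner{u(x), O(v(x))}\, d\mu(x),$$
the last equality again being cyclicity of the trace, $\tr(v\,u^{*}O) = u^{*}Ov = \inner{u, Ov}$. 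Combined with the first paragraph this is the displayed identity. For the inequality, observe that for each $x$ and each $O\in O(n)$ the vectors $u(x)$ and $O(v(x))$ both lie on $S^{n-1}$, so $\inner{u(x),O(v(x))}\le 1$ by Cauchy--Schwarz; integrating against the probability measure $\mu$ and taking the supremum over $O$ yields $\sum_i\la_i\le 1$.

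Essentially everything here is routine; the one point worth stating carefully is that the supremum in the nuclear-norm duality is already achieved within $O(n)$ — rather than merely over all operator-norm contractions — which is exactly the observation that the optimal contraction $UV^{*}$ is itself orthogonal. The interchange of the trace with the vector-valued integral needs nothing beyond boundedness of the integrand on a finite measure space.
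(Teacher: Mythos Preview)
Your proof is correct and follows essentially the same route as the paper's: both arguments take a singular value decomposition of $A$, reduce $\sup_{O\in O(n)}\tr(AO)$ (equivalently $\tr(A^{*}O)$, since $O\mapsto O^{*}$ is a bijection of $O(n)$) to $\sup_{O}\tr(\Sigma O)=\tr\Sigma$, and then unwind the trace through the integral to obtain the inner-product expression. The only cosmetic differences are that the paper works with $\tr(AO)$ rather than $\tr(A^{*}O)$ and states the step $\tr D=\sup_{O}\tr(DO)$ without the explicit ``diagonal entries of an orthogonal matrix are at most $1$'' justification that you supply.
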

\begin{proof}
  Let $A=UDV$ be the singular value decomposition for $A$ where
  $D=\op{diag}(\la_1,\dots,\la_n)$ and $U,V$ are orthogonal. We have
  \begin{gather*}
  \tr D=\sup_{O\in O(n)}\tr \[DO\]=\sup_{O\in O(n)}\tr \[(U^*AV^*)O\]=\sup_{O\in O(n)}
  \tr \[AV^*OU\]=\sup_{O\in O(n)}\tr \[AO\]\\
  =\sup_{O\in O(n)}\int_X \tr \[u(x)\tensor
  O^*(v(x))\]d\mu(x)=\sup_{O\in O(n)}\int_X \inner{ u(x),O(v(x))}d\mu(x)\leq 1.
  \end{gather*}

\end{proof}

\begin{prop}\label{prop:singvalH}
If $\norm{A^{-1}}\geq \frac{1}{\eps\norm{\sigma_y^s}}$ for any $\eps\leq
1$, then the singular values of $H$ satisfy $\la_i\leq
\sqrt{\eps}\norm{\sigma_y^s}$ for $i=1,\dots,n-1$ and $\la_n\leq
\norm{\sigma_y^s}$.
\end{prop}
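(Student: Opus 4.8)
The plan is to realize $H$ as a single average of rank‑one tensors of unit vectors and to use the hypothesis to single out a direction $v\in T_{F_s(y)}\til N$ along which this average is almost entirely concentrated. First I would rewrite $H=\int_{\D\til N\times\til M}\grad B_\theta\tensor dr_z\,d\Xi(\theta,z)$, where $\Xi$ is the measure on $\D\til N\times\til M$ with disintegration $d\Xi(\theta,z)=d\nu_{\til f(z)}(\theta)\,d\mu_y^s(z)$; by the definition of $\sigma_y^s$ as the convolution of $\mu_y^s$ with the visual measures, $\Xi$ has total mass $\norm{\sigma_y^s}$ and its projection to $\D\til N$ is $\sigma_y^s$. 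Since $\abs{\grad B_\theta}=1$ everywhere and $\abs{dr_z}=1$ off the cut locus of $y$ (a $\Xi$‑null set), the normalized tensor $H/\norm{\sigma_y^s}$ has exactly the form treated in Lemma \ref{lem:sing}, which gives $\sum_i\la_i\le\norm{\sigma_y^s}$ and in particular the second assertion $\la_n\le\norm{\sigma_y^s}$.

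For the remaining singular values I would feed $\norm{A^{-1}}\ge\frac1{\eps\norm{\sigma_y^s}}$ into Lemma \ref{lem:smallA}, obtaining a unit vector $v\in S_{F_s(y)}\til N$ with $\int\cos^2\bigl(t_v(\theta)\bigr)\,d\sigma_y^s\ge(1-\eps)\norm{\sigma_y^s}$. Since $\cos^2(t_v(\theta))=\inner{\grad B_\theta,v}^2$ and $\Xi$ projects to $\sigma_y^s$, this reads $\int\bigl(1-\inner{\grad B_\theta,v}^2\bigr)\,d\Xi\le\eps\norm{\sigma_y^s}$. Writing $P_{v^\perp}$ for the orthogonal projection of $T_{F_s(y)}\til N$ onto $v^\perp$, for any unit $\xi\in T_y\til M$ I would estimate $\abs{P_{v^\perp}H\xi}\le\int\abs{dr_z(\xi)}\,\abs{\grad B_\theta-\inner{\grad B_\theta,v}v}\,d\Xi\le\int\sqrt{1-\inner{\grad B_\theta,v}^2}\,d\Xi$, and Cauchy–Schwarz against the constant $1$ bounds the last integral by $\sqrt{\norm{\sigma_y^s}}\cdot\sqrt{\eps\norm{\sigma_y^s}}=\sqrt\eps\,\norm{\sigma_y^s}$. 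Hence $\norm{P_{v^\perp}\of H}\le\sqrt\eps\,\norm{\sigma_y^s}$, an operator of rank $n-1$.

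To conclude I would compare singular values. For every $\xi\in T_y\til M$, $\abs{H\xi}^2=\inner{H\xi,v}^2+\abs{P_{v^\perp}H\xi}^2=\inner{\xi,H^*v}^2+\abs{P_{v^\perp}H\xi}^2\le\inner{\xi,H^*v}^2+\eps\norm{\sigma_y^s}^2\abs{\xi}^2$, so as a quadratic form $H^*H$ is dominated by $\eps\norm{\sigma_y^s}^2\,\Id$ plus the nonnegative rank‑one form $\xi\mapsto\inner{\xi,H^*v}^2$. The $(n-1)$‑st smallest eigenvalue of this comparison form is exactly $\eps\norm{\sigma_y^s}^2$, since a nonnegative rank‑one perturbation of a scalar matrix can only raise the top eigenvalue; Weyl monotonicity then gives $\mu_{n-1}(H^*H)\le\eps\norm{\sigma_y^s}^2$, i.e. $\la_{n-1}\le\sqrt\eps\,\norm{\sigma_y^s}$, whence $\la_i\le\sqrt\eps\,\norm{\sigma_y^s}$ for all $i\le n-1$. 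The only delicate point is the bookkeeping in this last step — keeping the increasing ordering of the $\la_i$ straight and checking that at most one eigenvalue of the comparison form exceeds $\eps\norm{\sigma_y^s}^2$; otherwise the argument is a routine combination of Lemmas \ref{lem:sing} and \ref{lem:smallA} with Cauchy–Schwarz, and I foresee no genuine obstacle.
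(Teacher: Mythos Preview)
Your proposal is correct and follows essentially the same route as the paper: both arguments invoke Lemma~\ref{lem:smallA} to produce the direction $v$, bound $\norm{P_{v^\perp}H}$ by Cauchy--Schwarz against $\int\sqrt{1-\inner{\grad B_\theta,v}^2}\,d\sigma_y^s$, and then appeal to Lemma~\ref{lem:sing} for the top singular value. The only cosmetic difference is in the last step, where the paper deduces $\la_{n-1}\le\norm{P_{v^\perp}H}$ by the dimension count that $\R w_{n-1}\oplus\R w_n$ meets $v^\perp$, while you reach the same conclusion via the rank-one comparison $H^*H\le\eps\norm{\sigma_y^s}^2\Id+(H^*v)(H^*v)^*$ and Weyl monotonicity; these are equivalent min--max arguments.
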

\begin{proof}
Let
$$0\leq \la_1\leq \dots<\la_n\leq \norm{\mu_y^s}$$ be the singular values of $H$ given by
$\la_i=\inner{w_i,H(u_i)}$ for a $g$-orthonormal frame $\set{u_i}$
and a $g_o$-orthonormal frame $\set{w_i}$. We can therefore have
$$\la_i=\int_{\widetilde{M}}\int_{\D \til{{N}}}
\inner{w_i,\grad_{ F_s(y) } B_\theta}\inner{\grad_y r_z,u_i}
\,d\nu_{\tilde{f}(z)}(\theta)\, d\mu_y^s(z).$$

Since $\inner{\grad_y r_z,u_i}\leq 1$ we can estimate
$$\la_i\leq\int_{\D \til{{N}}}
\inner{w_i,\grad_{ F_s(y) } B_\theta}\,d\sigma_y^s(\theta).$$

Now let $v$ be the vector provided by Lemma \ref{lem:smallA}, and write
$\grad B_\theta^\perp$ for the unit vector along the projection of $\grad_{
F_s(y) } B_\theta$ to $v^\perp$. then
\begin{align*}
\la_i&\leq\int_{\D \til{{N}}}
\inner{w_i,\grad_{ F_s(y) } B_\theta}\,d\sigma_y^s(\theta)\\
&=\int_{\D \til{{N}}}
\inner{w_i,\cos(t(\theta))v+\sin(t(\theta))\grad B_\theta^\perp}\,d\sigma_y^s(\theta)\\
&\leq\int_{0}^{\frac{\pi}{2}}
\inner{w_i,v}\cos(t)+\norm{\op{proj}_{v^\perp}(w_i)}\sin(t)\,d\tau(t).
\end{align*}

Since $\R w_n\directsum \R w_{n-1}$ intersects the subspace $v^\perp$ we
have
$$\la_{n-1}=\sup_{w\in w_n^\perp}
\inner{H^*(w),H^*(w)}^{\frac12}\leq \sup_{w\in v^\perp}
\inner{H^*(w),H^*(w)}^{\frac12}\leq\sup_{\substack{w\in v^\perp\\ u\in
S_y\til{M}}} \inner{w,H(u)}$$ where the last inequality holds since we may take
$u=\frac{H^*(w)}{\norm{H^*(w)}}.$ Hence we have from the previous
computation
$$\frac{\la_{n-1}}{\norm{\sigma_y^s}}\leq\frac{1}{\norm{\sigma_y^s}}\int_{0}^{\frac{\pi}{2}}
\sin(t)\,d\tau(t)\leq
\sqrt{\frac{1}{\norm{\sigma_y^s}}\int_{0}^{\frac{\pi}{2}}
\sin^2(t)\,d\tau(t)}\leq \sqrt{\eps}.$$ Here we have used H\"{o}lder's
inequality followed by Lemma \ref{lem:smallA}. Therefore, all $\la_i\leq
\sqrt{\eps}\norm{\sigma_y^s}$ for $i=1,\dots,n-1$, and so $\la_n\leq
\norm{\sigma_y^s}-\sum_{i=1}^{n-1}\la_i$ by applying Lemma \ref{lem:sing}
to the normalized measure $\frac{\sigma_y^s}{\norm{\sigma_y^s}}$.

\end{proof}

\begin{proof}[Proof of Theorem \ref{thm:gradest}]
Recall that we are assuming $\kappa(g)=1$ and we set
$\eps=\max\set{\frac{\norm{\mu_y^s}}{\norm{A^{-1}}},1}$. Observe that the
matrix associated to $(\det H) H^{-1}$ is the adjunct of the matrix associated to
$H$. By Proposition \ref{prop:singvalH}, its singular values are at most
$\sqrt{\eps}^{n-2} \norm{\mu_y^s}^{n-1}$ and one singular value is at most
$\sqrt{\eps}^{n-1}\norm{\mu_y^s}^{n-1}$.

First we show that the product $\circled{1}\cdot\circled{2}$ has norm
bounded above by $C(1+s)\(1+\eta^{-n}\)\sqrt{\eps}^{n-5}$ where $C$
depends only on $n$. Given the estimate of $\grad_v H$, it just remains to point
out that $\frac{\det H}{\det A}\norm{ H^{-1}}$ is bounded above by
$\frac{1}{\norm{\mu_y^s}}$ whenever $n\geq 4$.

On the other hand, $\circled{1}\cdot \circled{3}$ has norm bounded by
$\frac{\det H}{\det A} \norm{A^{-1}\grad_v A}$. This is bounded by
$\frac{\det H}{\det A} \(4s\beta \norm{A^{-1}}\norm{\mu_y^s}+s\)$. This is
in turn bounded by $4s\beta \sqrt{\eps}^{n-5}+s\sqrt{\eps}^{n-3}$ which is
bounded by $5s\beta$ whenever $n>4$.
\end{proof}

\section{Applications}\label{sec:applications}

In this section we will explore some of the consequences of theorem
\ref{thm:critical}. We first mention a couple of well-known
topological conditions for the existence of a map $f:M\to N$ of
nonzero degree. Let $\bar{N}$ be the cover of $N$ corresponding to
$f_*\pi_1(M)<\pi_1(N)$. Since $f$ induces a map
$\bar{f}:M\to\bar{N}$, we have the following commutative triangle,
$$\xymatrix{
  H_n(M) \ar[dr]_{\times \deg(f)} \ar[r]^{\times k}
                & H_n(\bar{N}) \ar[d]^{\times \[f_*\pi_1(M)\,:\,\pi_1(N)\]}  \\
                & H_n(N)             }$$
where the multiplication is with respect to the bases of $\Z$
determined by the respective fundamental classes and $k\in \Z$. In
particular, the index of $f_*\pi_1(M)$ in $\pi_1(N)$ divides
$\deg(f)$. Consequently $\pi_1(M)$ is virtually at least as large as
$\pi_1(N)$.

In the case $f$ has degree one, one may also deduce that
$f_*:H_*(M)\to H_*(N)$  is a split surjection using the induced map
$f^*:H^{n-*}(N)\to H^{n-*}(M)$ on cohomology together with
Poincar\'{e} duality. There is a similar statement for general
degree as well. Thus one can obtain obstructions from both the
homology groups, $H_*(M)$ and $H_*(N)$, and the fundamental groups,
$\pi_1(M)$ and $\pi_1(N)$, to the existence of a nonzero degree map.

For Theorems \ref{thm:ent-vol} and \ref{thm:critical} we would like
to obtain some bounds on $h(g)$ in terms of other quantities.

For $\Ga=\pi_1(M)$ and $S$ a finite subset of $\Ga$, let
$\inner{S}<\Ga$ be the subgroup generated by $S$. Let $\phi_S$ be
the metric on the Cayley graph of $(<S>,S)$ which is the weighted
simplicial distance where the length of each edge corresponding to a
generator $\sigma\in S$ is given by the Riemannian distance
$d(p,\sigma p)$ in the universal cover $(\tilde{M},g)$. Define
$$h_{g,S}=\limsup_{R\to\infty}\frac{\log
\#\set{\ga\in\Ga\,:\,\phi_S(\id,\ga)\leq R}}{R}.$$ Manning proved in
\cite{Manning:05a} the following formula for the volume growth
entropy,
$$h(g)=\sup\set{h_{g,S}\,:\,S \text{ finitely generates }\Ga}.$$
This allows us to obtain a curvature and entropy free restatement of Theorem
\ref{thm:critical}. Here the entropy is replaced by a dilatation.

\begin{cor}
For any $(N,g_o)\in \mc{N}_{n,\rho}$ with $u(g_o)=h(g_o)$ and $n>4$ and
closed Riemannian manifold $(M,g)$ together with any continuous map $f:M\to
N$, there exists a $C^1$ map $F:M\to N$ homotopic to $f$ and an $r>0$ such
that
$$\vol_{g}(M)\geq \op{H}^n \abs{\deg(f)}\vol_{g_o}(N) + \frac12 \vol_g\(T_{\frac{r}{\kappa(g)}}(F)\),$$
where $$\op{H}=\inf\set{\frac{h_{g_o,f_*S}}{h_{g,S}}\,:\,S \text{ finitely
generates }\pi_1(M)}.$$ As before, $r$ depends only on
$n,\rho,\abs{\deg(f)}\norm{N}$ and $\kappa(g)\injrad{\til{g}}$.
\end{cor}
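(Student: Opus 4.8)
The plan is to deduce this corollary directly from Theorem \ref{thm:critical} by reinterpreting the two entropy factors $h(g)^n$ and $u(g_o)^n$ appearing there. Since the hypothesis assumes $u(g_o) = h(g_o)$, the inequality of Theorem \ref{thm:critical} reads
\[
h(g)^n\vol_g(M) \geq h(g_o)^n\abs{\deg(f)}\vol_{g_o}(N) + \tfrac12 h(g)^n\vol_g\bigl(T_{r/\kappa(g)}(F)\bigr).
\]
Dividing through by $h(g)^n$ (which is positive, since the existence of a nonzero degree map to a negatively curved $N$ forces $\kappa(g)>0$ and hence $h(g)>0$, as noted in the preliminaries) reduces everything to bounding the ratio $h(g_o)/h(g)$ from below by $\op{H}$. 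So the whole content is the elementary inequality $h(g_o)^n/h(g)^n \geq \op{H}^n$, i.e. $h(g_o)/h(g)\geq \inf_S h_{g_o,f_*S}/h_{g,S}$.

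First I would invoke Manning's formula from \cite{Manning:05a}, which gives $h(g) = \sup_S h_{g,S}$ over finite generating sets $S$ of $\pi_1(M)$. The key point to check is that for each fixed finite generating set $S$ of $\pi_1(M)$, its image $f_*S$ generates a subgroup of $\pi_1(N)$ whose corresponding combinatorial entropy $h_{g_o,f_*S}$ is bounded above by $h(g_o)$ — this follows because the weighted word-length entropy of a subgroup is dominated by the volume growth entropy of $(\til N,\til g_o)$ (the orbit of the subgroup sits inside an orbit of $\pi_1(N)$, whose growth is controlled by $h(g_o)$ via the \v Svarc--Milnor comparison underlying Manning's result; here one uses that $N$ is closed so $\pi_1(N)$ acts cocompactly). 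Thus for each such $S$,
\[
\frac{h_{g_o,f_*S}}{h_{g,S}} \leq \frac{h(g_o)}{h_{g,S}},
\]
and taking the infimum over $S$ on the left while letting $S$ range so that $h_{g,S}\to h(g)$ on the right gives $\op{H} \leq h(g_o)/h(g)$, which is exactly what is needed. Raising to the $n$-th power and substituting into the divided inequality yields the stated conclusion, with the tubular radius $r$ inherited verbatim from Theorem \ref{thm:critical} (noting that under $u(g_o)=h(g_o)$ and the curvature bounds, $\abs{\deg(f)}\norm N$ still controls the same quantities).

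The main obstacle is the direction of the comparison between $h_{g_o,f_*S}$ and $h(g_o)$: one must be careful that $f_*S$ may fail to generate all of $\pi_1(N)$ (indeed when $\deg f>1$ it generates a proper finite-index subgroup), so one cannot simply quote Manning's supremum formula for $N$ directly. The resolution is that Manning's inequality $h_{g_o,T}\leq h(g_o)$ holds for \emph{every} finite subset $T\subset\pi_1(N)$, not just generating ones — the combinatorial length function $\phi_T$ dominates (up to the cocompactness constant) the Riemannian distance in $\til N$ restricted to the orbit of $\inner T$, so the exponential growth rate of $\inner T$ under $\phi_T$ cannot exceed the volume growth rate of $\til N$. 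I would state this as a short lemma (or cite the relevant estimate in \cite{Manning:05a}) and then the corollary is immediate.
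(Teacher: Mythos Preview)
Your proposal is correct and follows essentially the same route as the paper: invoke Theorem~\ref{thm:critical} with $u(g_o)=h(g_o)$, divide by $h(g)^n$, and reduce to the inequality $\op{H}\leq h(g_o)/h(g)$, which you and the paper both obtain by taking a sequence $S_i$ with $h_{g,S_i}\to h(g)$ and using $h_{g_o,f_*S_i}\leq h(g_o)$. The only cosmetic difference is in justifying this last bound: the paper enlarges $f_*S_i$ to a genuine generating set $S_i'\supset f_*S_i$ of $\pi_1(N)$ and uses the monotonicity $h_{g_o,f_*S_i}\leq h_{g_o,S_i'}\leq h(g_o)$, whereas you argue it directly for arbitrary finite subsets via the orbit comparison; both are fine.
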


\begin{proof}
Take a sequence $S_i$ so that  $h_{g,S_i}\to h(g)$ from below. Note
that $u(g_o)=h(g_o)\geq h_{g_o,S'_i}\geq h(g_o,f_*S_i)$ where $S'_i$
is any generating set for $\pi_1(N)$ containing
$f_*S_i=\set{f_*\sigma\,:\,\sigma\in S_i}$. Hence after setting
$\eps_i=h(g)-h_{g,S_i}$, we have
$$\(h_{g,S_i}+\eps_i\)^n\(\vol_{g}(M)-\frac12 \vol_g\(T_{\frac{r}{\kappa(g)}}(F)\)\)
\geq h_{g_o,f_*S_i}^n\abs{\deg(f)}\vol_{g_o}(N).$$ Letting
$\eps_i\to 0$ and noting that
$H\leq\liminf_i\frac{h_{g_o,f_*S_i}}{h_{g,S_i}}$ finishes the proof.

\end{proof}

Consider a smooth $n$-manifold $(N,g_o)$ with a codimension $0$ smooth
submanifold $S$, and suppose $M_1=N\setminus S$ and $M_2$ is another
smooth $n$-manifold with boundary admitting a map $f:M_2\to S$ which is a
diffeomorphism from $\partial M_2$ to $\partial S$. Let $\pa f$ denote the
restriction of $f$ to $\pa M_2$. There is a degree one map from the adjunction
space $M_1\cup_{\pa f} M_2$ to $N$ formed by crushing $M_2$ to $S$ via $f$.

\begin{corollary}
  Fix $(N,g_o)$ with $-\rho^2\leq K_{g_o}\leq -1$. For any smooth metric $g$
  on $M_1\cup_{\pa f} M_2$ rescaled so that $K_g\geq -1$, there is a constant $C(n,\injrad(g),\rho)$ such
  that if
  $$\vol_{g}(M_1\cup_{\pa f} M_2)\leq \vol_{g_o}(N)+C,$$ then $M$ is
  diffeomorphic to $N$.
\end{corollary}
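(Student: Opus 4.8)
The plan is to recognize the statement as a direct instance of Theorem~\ref{thm:vol}, applied to the crushing map. Set $M := M_1\cup_{\partial f} M_2$; this is a closed smooth $n$-manifold, obtained by gluing $M_1=N\setminus S$ to $M_2$ along the diffeomorphism $\partial f\colon \partial M_2\to\partial S$ and smoothing a collar. Let $\bar f\colon M\to N$ be the degree one map described just above the corollary, namely the identity on $M_1$ and the collapse of $M_2$ onto $S$ via $f$. First I would confirm $\deg(\bar f)=1$: over an interior point $p\in M_1\subset N\setminus S$ the map $\bar f$ has the single preimage $p$ and is a local diffeomorphism there, so its local degree is $+1$, hence $\deg(\bar f)=1$.

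Next, observe that the hypotheses of Theorem~\ref{thm:vol} are exactly in force: $n>4$, the metric $g$ on $M$ is normalized so that $K_g\geq -1$, and $N$ carries the metric $g_o$ with $-\rho^2\leq K_{g_o}\leq -1$. Theorem~\ref{thm:vol} then supplies a constant depending only on $n$, $\injrad(g)$, $\rho$ and $\abs{\deg(\bar f)}\norm{N}=\norm{N}$; since $N$ is fixed, $\norm{N}$ is a fixed number and the constant may be written $C=C(n,\injrad(g),\rho)$. The theorem asserts that if
$$\vol_{g}(M)\leq \abs{\deg(\bar f)}\vol_{g_o}(N)+C=\vol_{g_o}(N)+C,$$
then $\bar f$ is homotopic to a smooth covering map of degree $\abs{\deg(\bar f)}=1$. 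A degree one smooth covering map is a diffeomorphism, so $M$ is diffeomorphic to $N$, which is the assertion of the corollary.

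There is no substantive obstacle here: the only things needing care are that the adjunction space $M_1\cup_{\partial f}M_2$ is a closed smooth manifold (standard, via smoothing the glued collar) and that the crushing map genuinely has degree one (checked above by a local-degree computation). I would also remark that Corollary~\ref{cor:connect} is the special case $S=B^n$, $M_1=N\setminus B^n$, $M_2=Q\setminus B^n$, with $\partial f$ the identity map of $S^{n-1}$ extended over $Q\setminus B^n$ into $B^n$ (possible since $B^n$ is contractible), so the present corollary indeed generalizes it.
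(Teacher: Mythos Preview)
Your argument is correct and is precisely the approach the paper intends: the paper gives no explicit proof of this corollary, but the preceding paragraph constructs the degree-one crushing map $M_1\cup_{\partial f}M_2\to N$ exactly so that Theorem~\ref{thm:vol} applies directly, with $\norm{N}$ absorbed into the constant since $(N,g_o)$ is fixed. Your checks that the adjunction space is a closed smooth manifold, that the crushing map has degree one, and that a degree-one smooth cover is a diffeomorphism are the only points needing mention, and you have handled them.
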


A special case of this is Corollary \ref{cor:connect} where such an
$M$ is $M=N\# Q$ for any smooth $n$-manifold $Q$. This is
interesting in the constext of the following result of Farrell and
Jones demonstrating that smooth rigidity fails in the negatively
curved category.

\begin{theorem}[\cite{Farrell-Jones:89c}]\label{thm:smooth-exotic} Let $n> 5$ be any
dimension for which there exist distinct projectively inequivalent
exotic smooth spheres $\Sigma_1,\cdots,\Sigma_k$. for any hyperbolic
$n$-manifold $M$ and any $\delta>0$ there is a finite cover
$\hat{M}$ of $M$ such that
$\hat{M},\hat{M}\#\Sigma_1,\hat{M}\#\Sigma_2,\dots,\hat{M}\#\Sigma_k$
are pairwise homeomorphic but not diffeomorphic. Moreover for
$i=1,\dots,k$, there are metrics $g_i$ on $M\#\Sigma_i$ such that
$$-1-\delta\leq  K_{g_i} \leq -1.$$
\end{theorem}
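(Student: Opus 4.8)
The plan is to prove the statement in two essentially independent pieces: a purely topological part (the manifolds are pairwise homeomorphic but not diffeomorphic, for a suitably chosen finite cover $\hat M$) and a Riemannian part (the $\delta$-pinched metrics on the $\hat M\#\Sigma_i$), the second being where all the work lies.

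First I would dispose of the homeomorphism assertion. Each $\Sigma_i$ is a homotopy $n$-sphere and $n\geq 5$, so by the topological Poincar\'e conjecture in these dimensions $\Sigma_i$ is homeomorphic to $S^n$; hence $\hat M\#\Sigma_i$ is homeomorphic to $\hat M\#S^n=\hat M$, and all of $\hat M,\hat M\#\Sigma_1,\dots,\hat M\#\Sigma_k$ are pairwise homeomorphic, for \emph{any} cover $\hat M$. Next, the choice of cover and the non-diffeomorphism. Since $\pi_1(M)\subset O(n,1)$ is finitely generated and linear it is residually finite, so it admits finite covers of arbitrarily large injectivity radius; moreover one may arrange in addition that the inertia group $I(\hat M)\subset\Theta_n$ is trivial and that the homomorphism $\Theta_n\to[\hat M,\op{Top}/\op{O}]$ given by pullback along a degree-one collapse map $\hat M\to S^n$ is injective. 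Producing such an $\hat M$ is a surgery-theoretic argument combined with the Farrell--Jones topological rigidity theorem for closed aspherical manifolds (available since $\dim\hat M\geq 5$), which identifies the smooth manifolds homotopy equivalent, and homeomorphic, to $\hat M$ with $[\hat M,\op{Top}/\op{O}]$ modulo the finite group of ``projective'' self-homotopy-equivalences of $\hat M$. Granting this classification, $\hat M\#\Sigma_i$ and $\hat M\#\Sigma_j$ can be diffeomorphic only if $\Sigma_i$ and $\Sigma_j$ are carried to one another by one of these projective self-equivalences, and $\hat M\#\Sigma_i$ can be diffeomorphic to $\hat M$ only if $\Sigma_i$ is standard; the hypotheses that the $\Sigma_i$ are distinct and pairwise projectively inequivalent then give exactly the pairwise non-diffeomorphism.

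The pinched metric is the main obstacle. Writing $\Sigma_i=D^n\cup_{\phi_i}D^n$ for an orientation-preserving $\phi_i\in\op{Diff}(S^{n-1})$ (possible since $n\geq 5$), one has $\hat M\#\Sigma_i=(\hat M\setminus B(p,R/2))\cup_{\phi_i}D^n$ for a basepoint $p$, and the naive idea is to keep the hyperbolic metric on $\hat M\setminus B(p,R)$ and, on the annulus $B(p,R)\setminus B(p,R/2)\cong S^{n-1}\times[R/2,R]$, install a metric $dr^2+\sinh^2(r)h_r$ with $h_r$ interpolating, with vanishing $r$-derivatives at the ends, between the round metric at $r=R$ and $\phi_i^*(\text{round})$ at $r=R/2$, then cap off with the hyperbolic ball metric on the inner $D^n$; spreading the interpolation over radial length $\asymp R$ would make the $r$-derivatives of $h_r$ of size $O(R^{-1})$, and the finite cover supplies a ball of any prescribed radius $R$. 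The catch, and the genuinely hard point, is that $\phi_i$ is \emph{not} isotopic to the identity through diffeomorphisms, so $\phi_i^*(\text{round})$ is not $C^2$-close to the round metric and the family $h_r$ cannot stay in a fixed $C^2$-neighborhood of it; the curvature of the naive warped product is then not within $\delta$ of $-1$ near $r=R/2$. Resolving this --- realizing the exotic regluing by a genuinely $C^2$-small deformation of the hyperbolic metric over the large region supplied by a deep enough finite cover, while verifying that the resulting smooth manifold is indeed $\hat M\#\Sigma_i$ --- is precisely the technical content of \cite{Farrell-Jones:89c}. I would therefore carry out the topological steps above and then invoke that construction, after which rescaling by a constant slightly larger than $1$ converts the resulting nearly $(-1)$-pinched metric into one with $-1-\delta\leq K\leq -1$, for $R$ chosen large enough in terms of $\delta$.
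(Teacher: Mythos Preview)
This theorem is not proved in the present paper; it is quoted verbatim from \cite{Farrell-Jones:89c} as an external result and then used (notably in Example~\ref{ex:conditions}). There is therefore no proof here to compare your proposal against.

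As a sketch of the Farrell--Jones argument itself, your outline is broadly accurate: the homeomorphism via the topological Poincar\'e conjecture is correct, the surgery-theoretic identification of smooth structures on $\hat M$ with $[\hat M,\op{Top}/\op{O}]$ (modulo the appropriate finite ambiguity) is the right framework for non-diffeomorphism, and you correctly isolate the real difficulty in the metric construction. One point worth flagging: the actual Farrell--Jones construction does \emph{not} carry out the exotic regluing inside a large embedded ball as you describe, but rather in a tubular neighborhood of a short closed geodesic in a suitably chosen cover; this is what the present paper alludes to in Example~\ref{ex:conditions} when it speaks of the procedure being ``carried out precisely in a tube of bounded diameter.'' The tube geometry is what allows the twist to be spread out so that curvature stays $\delta$-close to $-1$. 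Since you ultimately invoke \cite{Farrell-Jones:89c} for this step anyway, your proposal and the paper amount to the same citation, yours preceded by a helpful but slightly mis-localized expansion of what that citation contains.
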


By another construction, the conclusion of the above theorem also
holds in dimensions $n>4$. This theorem has been extended by Farrell
separately with Jones (\cite{Farrell-Jones:94}) and Aravinda
(\cite{Aravinda-Farrell:04}) to show that there are closed manifolds
of almost quarter pinched negative curvature which are homeomorphic
but not diffeomorphic to complex hyperbolic and quaternionic
hyperbolic manifolds. Observe that Theorem \ref{thm:vol} implies
that the degree of the cover in this theorem must depend on the
choice of $\delta$.

Farrell and Jones (\cite{Farrell-Jones:94c}) also gave a set of four
criteria, in terms of a boundary conjugacy, for when an isomorphism
between fundamental groups of two nonpositively curved manifold may
be realized by a diffeomorphism.
\begin{theorem}[\cite{Farrell-Jones:94c}]
Given nonpositively curved closed Riemannian manifolds $(M,g)$ and
$(N,g_o)$ and an isomorphism $\alpha:\pi_1(M)\to\pi_1(N)$, there
exists a diffeomorphism $f:M\to N$ with $f_*=\alpha$ provided:
\begin{enumerate}

  \item $\partial_\infty\tilde{M}$ and $\partial_\infty\tilde{N}$ have a natural $C^1$
  structure
  \item There is a $C^1$ conjugacy $\bar{h}:\partial_\infty\tilde{M}
  \goto{\cong}\partial_\infty\tilde{N}$ of the $C^1$ actions of $\pi_1(M)$ on
$\partial_\infty\tilde{M}$ and $\pi_1(N)$ on
$\partial_\infty\tilde{N}$. I.e.
$\bar{h}\of\gamma=\alpha(\gamma)\bar{h}$ for all
$\gamma\in\pi_1(M).$

\item The conjugacy $\bar{h}$ extends to a $C^0$ semiconjugacy $\tilde{h}:\tilde{M}\to
\tilde{N}$, I.e. the lift of a continuous map $h:M\to N$.

\item $\chi(M)=0$ (e.g. $n$ is odd).
\end{enumerate}
\end{theorem}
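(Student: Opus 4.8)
The plan is to promote the continuous map $h\colon M\to N$ supplied by hypothesis (3) to a \emph{tangential} homotopy equivalence and then to invoke the topological rigidity of Farrell--Jones together with smoothing theory; this is, in essence, the argument of \cite{Farrell-Jones:94c}. First I would record the two easy points. Since $\til h$ is an $\alpha$-equivariant extension of the equivariant boundary map $\bar h$ from (2), it induces $h_*=\alpha$ on $\pi_1$; and since $M,N$ are aspherical (Cartan--Hadamard) and $\alpha$ is an isomorphism, $h$ is automatically a homotopy equivalence. So it suffices to produce a diffeomorphism homotopic to $h$.

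Next I would lift $\bar h$ to the unit tangent bundles. Using the Hopf (Anosov) parametrization $S\til M\cong\bigl((\partial_\infty\til M\times\partial_\infty\til M)\setminus\Delta\bigr)\times\R$, in which a unit vector is recorded by the two endpoints of its geodesic together with the signed distance from its footpoint to a fixed reference horosphere, the equivariant map $\bar h\times\bar h$ extends to a $\pi_1$-equivariant homeomorphism $\til\Phi\colon S\til M\to S\til N$. It conjugates the geodesic flows up to a reparametrization of the $\R$-factor, and that reparametrization can be straightened using convexity of Busemann functions (much as in the construction of $\op{Bar}$ above), so $\til\Phi$ may be taken to intertwine the flows; it descends to $\Phi\colon SM\to SN$ lying over $h$. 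The $C^1$ hypotheses (1)--(2) are precisely what make $\Phi$ bundle-theoretically meaningful in the fiber and transverse-to-flow directions.

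The heart of the argument is to convert $\Phi$ into a comparison of tangent bundles. From the standard identifications $T(SM)\directsum\underline{\R}\cong\pi_M^*(TM\directsum TM)$ and $T(SN)\directsum\underline{\R}\cong\pi_N^*(TN\directsum TN)$ (the fiber being a sphere in $T_pM$, whose tangent bundle plus a line is trivial), together with $\pi_N\of\Phi=h\of\pi_M$, the comparison furnished by $\Phi$ descends after stabilizing to an isomorphism $TM\directsum TM\cong h^*TN\directsum h^*TN$ over $M$; from this one extracts $TM\cong h^*TN$, and this is where hypothesis (4) is essential — it removes the Euler-class ambiguity and lets one split off the geodesic-flow line field in the descent from $SM$ back to $M$. (When $n$ is odd, $\chi(M)=0$ is automatic, which is the source of the parenthetical remark in the statement.) In other words, $h$ is a tangential homotopy equivalence.

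Finally I would conclude by surgery. Because $N$ is nonpositively curved and, in the dimensions at issue here ($n\geq 5$), the Farrell--Jones topological rigidity theorem \cite{Farrell-Jones:89a} shows $h$ is homotopic to a homeomorphism $h_0\colon M\to N$; their vanishing results for $\mathrm{Wh}(\pi_1 M)$ and the relevant surgery obstruction groups are part of the same package. Since $h$, hence $h_0$, is tangential, the smoothing obstruction of the homeomorphism $h_0$ — an element of $[M,\op{Top}/O]$ detected through the normal invariant by the discrepancy between $TM$ and $h_0^*TN$ — vanishes, so $h_0$ is concordant, hence homotopic, to a diffeomorphism $f\colon M\to N$; as $f\simeq h$ we get $f_*=h_*=\alpha$. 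The main obstacle is the middle step: making rigorous the passage from a $C^1$ conjugacy at infinity to a tangential homotopy equivalence, in particular the careful handling of the flow direction where $\chi(M)=0$ enters — this is the technical core of \cite{Farrell-Jones:94c}.
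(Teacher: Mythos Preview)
The paper does not prove this theorem at all: it is quoted verbatim as a result of Farrell and Jones \cite{Farrell-Jones:94c} and used only for context, with the subsequent paragraph discussing its limitations rather than its proof. There is therefore nothing in the paper to compare your proposal against.

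That said, your sketch is a reasonable outline of the Farrell--Jones argument itself, and you are right that the technical core lies in promoting the $C^1$ boundary conjugacy to a tangential homotopy equivalence, with $\chi(M)=0$ entering to kill the Euler-class obstruction when splitting off the flow direction. One point to be careful about: the passage from an isomorphism $TM\oplus TM\cong h^*TN\oplus h^*TN$ to $TM\cong h^*TN$ is not automatic (stable isomorphism does not in general imply unstable isomorphism), and in \cite{Farrell-Jones:94c} the argument is organized somewhat differently so as to avoid this step; but since the present paper contains no proof, there is nothing further to compare.
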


The principal drawback to applying this theorem is that $C^1$
structures on $\pa \til{M},\pa \til{N}$ are only known to exist when
$M$ and $N$ either are of higher rank, are locally rank one
symmetric spaces, or are quarter pinched and negatively curved. In
the negatively curved case a $C^\infty$ structure would imply that
$M$ and $N$ are locally symmetric by
\cite{Benoist-Foulon-Labourie:92}. In light of this, Theorems
\ref{thm:vol} and \ref{thm:ent-vol} can be viewed as a an
effectively computable gap criterion for smooth equivalence.

Now we turn to proving some of the corollaries mentioned in the
introduction.

\begin{proof}[Proof of Corollary \ref{cor:pinch}]

If the conclusion does not hold, then there is a sequence of
pairwise homotopy equivalent nondiffeomorphic manifolds $(M_i,g_i)$
with $-1-\delta_i\leq  K_{g_i} \leq -1$ for a sequence $\delta_i\to
0$. The isomorphisms $(f_{ij})_*:\pi_1(M_i)\to\pi_1(M_j)$ are
induced by continuous maps $f_{ij}$ which can be chosen to be of
degree $1$ since the $M_i$ are $K(\pi_1(M_i),1)$'s. 

We can rescale the metric $g_i^\prime=(1+\delta_i)g_i$ so that
$-1\leq  K_{g_i^\prime} \leq \frac{-1}{1+\delta_i}$. The real
Schwarz lemma of \cite{Besson-Courtois-Gallot:99} then gives
$$(1+\delta_i)^{\frac{n}2}\vol_{g_i}(M_i)=\vol_{g_i^\prime}({M_i})
\geq
\vol_{g_j}(M_j)=\frac{\vol_{g_j^\prime}(M_j)}{(1+\delta_j)^n}.$$

This holds for all $i,j$, so the volumes form a Cauchy sequence.
Moreover, this provides a bound on the injectivity radius as
follows. If the injectivity radius of $(M_i,g_i^\prime)$ is less
than the Margulis constant $\eps$, then any component $A^o_{i,\eps}$
of the ``thin'' set $A_{i,\eps}:=\set{x\in M_i\,|\, \injrad(x)<
\eps}$ consists of a uniform tube neighborhood of a geodesic $\ga$
with $g_i^\prime$-length $l_i<\eps$. The volume of this tube
satisfies $\vol_{g_i^\prime}(A^o_{i,\eps})\geq C \abs{\log l_i}$
(see the discussion section in \cite{Reznikov:95}). However
$\vol_{g_i^\prime}(M_i)$ is bounded above, and therefore the
injectivity radius is bounded from below independent of $i$. In
particular, the $C$ of theorem \ref{thm:vol} depends only on $n$ for
this sequence of $M_i$ which contradicts their volumes converging.

\end{proof}

From this we can obtain Theorem \ref{thm:Belegradek} in the compact
case and $n>4$.
\begin{proof}[Proof of Theorem \ref{thm:Belegradek}]
We can imitate the proof above, to obtain that the family of pinched
negatively curved closed manifolds with fixed $\pi_1(M)$ has
uniformly bounded volume from above, and injectivity radius from
below. We then can choose a set of manifolds whose volumes are
within the constant $C$ of any manifold in this class. This covering
number bounds the number of diffeomorphism classes.
\end{proof}

\begin{proof}[Proof of Corollary \ref{cor:finiteness}]
Let $\mc{N}$ be the class of all the $(N,g_o)$, that is all smooth structures and
metrics on the fixed topological manifold $N$, satisfying the hypotheses. By
definition, the members of $\mc{N}$ are pairwise homeomorphic. In particular,
each element of $\mc{M}$ admits a degree one map to all of the elements of
$\mc{N}$. In particular there is an $(N,g_o)\in\mc{N}$ such that
$\vol_{g}(M)\leq \vol_{g_o}(N)+C$. Thus we can apply Theorem \ref{thm:vol}
to conclude that $M$ and $N$ are diffeomorphic. Since $\mc{N}$ is a class with
only a finite number of diffeomorphism types, so is $\mc{M}$.
\end{proof}
\begin{remark}
  The above easily generalized to the case where $\mc{N}$ is any class of
  negatively curved manifolds with a finite number of diffeomorphism  types
  and each member of $\mc{M}$ maps onto at least element of $\mc{N}$, but
  then the formula for the entropy-volume bound $V(\delta)$ for each $(M,g)\in \mc{M}$ must be
  restricted over those elements of $\mc{N}$ admitting a degree one map from
  $M$.
\end{remark}

\begin{remark}
 An important point to the above proofs is that we do not need to use
  Cheeger finiteness (or any other form of Gromov-Hausdorff
  compactness theorems). Instead we have replaced this step with the
  direct analytic argument of Theorem \ref{thm:vol}).
\end{remark}

\begin{example}\label{ex:conditions}
We first point out that the constant $C$ in Theorems \ref{thm:vol}
and \ref{thm:ent-vol} as well as $r$ in Theorem \ref{thm:critical}
must depend on $\delta$. Otherwise, we could take a hyperbolic
$(N,g_o)$ and let $M=N\#Q$ the smooth connect sum with a very small
diameter Riemannian manifold $(Q,g_Q)$ with arbitrary topology.
However, $M$ admits a degree $1$ map to $N$ and by scaling $g_Q$,
$\vol_{g}(M)$ can be made as close to $\vol_{g_o}(N)$ as desired.

Similarly, the dependence of $C$ and $r$ on $\rho$ is also necessary. Take a
fixed hyperbolic manifold $(M,g)$ of finite volume and a sufficiently small
injectivity radius $\delta$. By the Margulis Lemma, there is an $\eps_o>0$ such
that each component $A_\eps$ of the $\eps$-thin part $M_\eps$ consisting of
points with injectivity radius less than $\eps$ for $\delta<\eps<\eps_o$ is
topologically an $n-1$ ball bundle over $S^1$, where the $0$ section is a short
geodesic. The metric is locally $\cosh(r)^2 d\ga^2+dr^2+\sinh(r)^2
d\theta^2$ where $d\gamma$ is the geodesic arc element, $dr$ the radial
element and $d\theta^2$ represents the combined spherical metric. For
$\eps^{\prime}<\eps$ let $g^{\prime}$ be a new metric on $M$ such that
$g^{\prime}=g$ on the complement of $A_\eps$ and on $A_{\eps^{\prime}}$,
$g^{\prime}$ has constant curvature $-\rho^2$. We can achieve this with a
warped product metric of the form $h(r)^2 d\ga^2+dr^2+(h(r)^2-h(0)^2)
d\theta^2$ for a convex function which interpolates between $\cosh(r)$ for
$r>\op{diam}(A_\eps)$ and $\frac{\cosh(\rho r)}{\rho}$ for $r\leq
\op{diam}(A_\eps^{\prime})$ with uniformly bounded
$0\frac{h^{\prime}}{h}\leq 0$ and $\frac{h^{\prime\prime}}{h}$. For any
$\la>0$ we can choose $\eps$ sufficiently close to $\delta$ so that
$\vol_{g}(A_\eps)<\la$.  Now $(M,\rho^2 g^{\prime})$ has curvature bounded
below by $-1$ and constant curvature $-1$ on the tube $A_{\eps^{\prime}}$. If
the injectivity radius of this rescaled metric is not sufficiently large on
$A_{\eps^{\prime}}$ we can replace $M$ by a cover so that the injectivity
radius of $(A_{\eps^{\prime}},\rho^2g^{\prime})$ is sufficiently large to
perform the operation described in \cite{Farrell-Jones:89c}. Note that since
$(A_{\eps^{\prime}},\rho^2g^{\prime})$ has constant curvature $-1$, the
degree of the cover is independent of $\rho$. Also assuming that we have
chosen an appropriate dimension $n$, there is an exotic smooth $n$-sphere
$\sigma$ such that by Theorem \ref{thm:smooth-exotic}, $N=M\#\Sigma$ is
not diffeomorphic to $M$. Observe also that the connect sum and the
procedure of controlling the curvature, Proposition 1.3 of
\cite{Farrell-Jones:89c}, are carried out precisely in a tube of bounded
diameter with the metric isometric to the original in a neighborhood of the
boundary. Since the $\rho^2 g^{\prime}$ diameter of $A_{\eps^{\prime}}$ is
unbounded as $\rho\to \infty$, we conclude that there exists a metric, denoted
by $\rho^2 g_o$, on $N$ which agrees with $\rho^2g^{\prime}$ outside of
$A_{\eps^{\prime}}\#\Sigma$ and $-\frac32\leq
 K_{\rho^2g_o} \leq -\frac12$ on $A_{\eps^{\prime}}$. In particular
$-\frac32 \rho^2 K_{g_o} \leq -1$. Summarizing we have $\vol_{g}(M)\leq
Vol(N,g_o)+\la$, and $N$ not diffeomorphic to $N$. Now we can choose $\la$
to be arbitrarily small which would violate Theorem \ref{thm:vol} if $C$ did not
depend on $\rho$.
\end{example}

\def\cprime{$'$}
\providecommand{\bysame}{\leavevmode\hbox to3em{\hrulefill}\thinspace}
\providecommand{\MR}{\relax\ifhmode\unskip\space\fi MR }
\providecommand{\MRhref}[2]{%
  \href{http://www.ams.org/mathscinet-getitem?mr=#1}{#2}
} \providecommand{\href}[2]{#2}

\end{document}